\renewcommand{\mathcal}{\EuScript}
\theoremstyle{plain}                                              \newtheorem{thm}{Theorem}[section]
\newtheorem{lem}[thm]{Lemma}
\newtheorem{prop}[thm]{Proposition}
\newtheorem{cor}[thm]{Corollary}
 \newtheorem{thmA}{Theorem}
\theoremstyle{definition}
\newtheorem{rem}[thm]{Remark}
\newtheorem{defn}[thm]{Definition}
\newtheorem{notation}[thm]{Notation}
\title{The handlebody group is a virtual duality group}
\begin{document}

\author{Dan Petersen}
\author{Richard D. Wade}

\begin{abstract}We show that the mapping class group of a handlebody is a virtual duality group, in the sense of Bieri and Eckmann. In positive genus we give a description of the dualising module of any torsion-free, finite-index subgroup of the handlebody mapping class group as the homology of the \emph{complex of non-simple disc systems}. \end{abstract}
 \maketitle
\newcommand{\eps}{\varepsilon}
\newcommand{\R}{\mathbb R}
\newcommand{\Q}{\mathbb Q}
\newcommand{\C}{\mathbb C}
\newcommand{\M}{\mathcal{M}}
\newcommand{\Mbar}{\overline{\mathcal{M}}}
\newcommand{\HM}{\mathcal{HM}}
\newcommand{\T}{\mathcal T}
\newcommand{\TT}{\mathcal T^{\mathrm{tw}}}
\newcommand{\HT}{\mathcal{HT}}
\newcommand{\Y}{\mathcal Y}
\newcommand{\Sp}{\mathrm{Sp}}
\newcommand{\trop}{\mathrm{trop}}
\newcommand{\Z}{\mathbb Z}

\newcommand{\RGB}{{\mathsf{RGB}}}
\newcommand{\dRGB}{\partial\mathsf{RGB}}
\newcommand{\RG}{\mathsf{RG}}
\newcommand{\dRG}{\partial\mathsf{RG}}
\newcommand{\NS}{\mathsf{NS}}
\newcommand{\D}{\mathsf{D}}

\newcommand{\compNS}{\mathbf{NS}}
\newcommand{\compD}{\mathbf{D}}
\newcommand{\Sph}{\mathbf{Sph}}
\newcommand{\CV}{\mathbf{CV}}

\newcommand{\SD}{\mathsf{S}}
\newcommand{\Po}{\mathsf{P}}
\newcommand{\mcg}{\mathrm{Mod}}
\newcommand{\hmod}{\mathrm{Mod}}
\newcommand{\diff}{\mathrm{Diff}}
\newcommand{\FN}{\mathrm{FN}_\mathcal{P}}
\newcommand{\co}{\colon}
\newcommand{\op}{{\mathrm{op}}}
\newcommand{\altjoin}{\oast}
\newcommand{\out}{\mathrm{Out}}
\newcommand{\Ht}{{\mathrm{ht}}}

\tableofcontents

\section{Introduction}

Let $\Gamma$ be a discrete group. Suppose that there exists a closed $d$-manifold $X$ which is a $K(\Gamma,1)$. Then by Poincar\'e duality on $X$ one has isomorphisms
$$ H^k(\Gamma,A) \cong H_{d-k}(\Gamma, A\otimes D)$$
for all $k \in \Z$ and all $\Gamma$-modules $A$, where $D=\mathbb{Z}$ and the $\Gamma$-action on $D$ corresponds to the orientation local system on $X$. The isomorphisms are natural, being implemented by capping with the fundamental class in $H_d(\Gamma,D)$. A group $\Gamma$ is a \emph{Poincar\'e duality group} of \emph{cohomological dimension} $d$ if it admits such a \emph{dualising module} $D$ and fundamental class in $H_d(\Gamma,D)$ inducing isomorphisms as above.

It is natural to relax the condition that $D=\mathbb{Z}$ as an abelian group. Allowing the dualising module $D$ to be arbitrary leads to the more general notion of a \emph{duality group}, as introduced by Bieri--Eckmann \cite{be}. Many groups of interest in topology and geometric group theory are known to be duality groups, at least virtually: in particular, arithmetic groups, mapping class groups of compact surfaces, and (outer) automorphism groups of free groups are all virtual duality groups \cite{borelserre,harer,bestvinafeighn}. 

McCullough \cite{mccullough} showed that the mapping class group $\mcg(M)$ of a Haken three-manifold $M$ is also a virtual duality group. Away from the Haken case, let $V$ be a solid handlebody of genus $g$, with $b$ marked discs and $p$ marked points on its boundary. In the same paper, McCullough showed that $\mcg(V)$ virtually has a finite classifying space. However, he could only show that $\mcg(V)$ is a virtual duality group when $g=2$, using very special properties of the genus two handlebody group. Our first main theorem shows that the mapping class group of any handlebody is indeed a virtual duality group.

\begin{thmA}\label{thm:A}
For $2g -2+ b +p > 0$, the handlebody group $\hmod(V)$ is a virtual duality
group of dimension $d(g, b, p)$, where $d(g, 0 , 0 ) = 4 g - 5 $; $d(g , b , p ) = 4 g + 2 b + p - 4$ when $g > 0$ and $b + p > 0$; and $d(0,b,p)=2b + p - 3$.
\end{thmA}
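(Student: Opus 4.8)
The plan is to proceed via the Bieri--Eckmann criterion: a group $\Gamma$ with a finite-dimensional $K(\Gamma,1)$ is a duality group of dimension $d$ precisely when $H^i(\Gamma,\Z\Gamma)$ vanishes for $i\neq d$ and $H^d(\Gamma,\Z\Gamma)$ is torsion-free. Since $\hmod(V)$ is known (by McCullough) to virtually admit a finite classifying space, it suffices to exhibit a torsion-free finite-index subgroup $\Gamma\leq\hmod(V)$ and to compute its cohomology with $\Z\Gamma$-coefficients. The natural model for such a computation is a contractible complex on which $\hmod(V)$ acts with good stabilizers, so the first step is to identify the right analogue of the Teichm\"uller space / arc complex picture. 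Here the \emph{handlebody Teichm\"uller space} $\HT$ (as in the notation block) serves as a contractible space with a proper $\hmod(V)$-action whose quotient is the relevant moduli space; the key geometric input is a cocompact ``core'' or spine inside $\HT$, or equivalently a Borel--Serre-type bordification $\overline{\HT}$ whose boundary $\partial\overline{\HT}$ controls the cohomology at infinity.

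The main steps, in order, are as follows. First, I would set up the bordification of handlebody Teichm\"uller space and show that $H^\ast_c(\HT)\cong H^\ast(\HT,\partial\overline{\HT})$ is concentrated in the single degree $d(g,b,p)$, with torsion-free homology there. This is the heart of the matter and should be reduced, via an appropriate nerve/cover argument, to showing that $\partial\overline{\HT}$ is homotopy equivalent to a wedge of spheres of dimension $d(g,b,p)-1$ — in the positive-genus case the abstract's ``complex of non-simple disc systems'' $\compNS$ (or $\NS$) is precisely the model for this boundary, so one needs a Solomon--Tits-style theorem identifying $\partial\overline{\HT}$ with (the realization of) $\NS$ and proving the latter is Cohen--Macaulay of the right dimension. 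Second, with the cohomology-at-infinity computation in hand, the virtual cohomological dimension bound $\operatorname{vcd}\hmod(V)\leq d(g,b,p)$ follows, and matching it with the lower bound coming from an explicit free abelian (or poly-free) subgroup of that rank pins down the dimension. Third, Bieri--Eckmann duality then gives that any torsion-free finite-index $\Gamma$ is a duality group with dualizing module $H^{d}_c(\HT;\Z)\cong \widetilde H_{d-1}(\NS;\Z)$, which is exactly the stated description of the dualizing module.

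The treatment naturally splits into the three regimes appearing in the statement of the theorem, and the bookkeeping differs in each. When $g=0$ the handlebody group is (virtually) a surface mapping class group / braid-type group and the dimension $2b+p-3$ should match known vcd computations after a small translation; when $g>0$ and $b+p>0$ the marked discs and points contribute the linear term $2b+p$, and when $g>0$, $b=p=0$ one gets the exceptional value $4g-5$ (rather than $4g-4$), reflecting that the closed handlebody group has one ``missing'' dimension — this is the analogue of $\operatorname{vcd}\operatorname{Out}(F_n)=2n-3$ versus $\operatorname{vcd}\operatorname{Aut}(F_n)=2n-2$, and indeed $\hmod$ of a genus-$g$ handlebody surjects onto $\out(F_g)$, so the comparison with Bestvina--Feighn's result for $\out(F_n)$ is a useful sanity check and likely a technical ingredient.

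The step I expect to be the main obstacle is the homotopy-type computation of $\partial\overline{\HT}$, i.e. proving that the complex of non-simple disc systems $\NS$ is highly connected (Cohen--Macaulay of the correct dimension). For the full mapping class group Harer's arc-complex arguments and for $\out(F_n)$ the Bestvina--Feighn / Vogtmann connectivity arguments are already delicate; the handlebody setting mixes both worlds — disc systems in $V$ restrict to arc/curve systems on $\partial V$ but are constrained by what bounds discs — so establishing the connectivity bound (presumably by a ``bad simplex'' / link induction or a Morse-theoretic argument on the number of non-simple components) will require genuinely new combinatorial input. A secondary difficulty is verifying that the action on the relevant complex has the finiteness and torsion-freeness properties needed to keep $H^d_c$ torsion-free; this is usually handled by passing to a congruence-type or level subgroup, but one must check such subgroups exist and act freely enough on $\HT$ and on $\NS$.
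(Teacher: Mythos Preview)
Your overall architecture matches the paper's: apply the Bieri--Eckmann criterion via a proper cocompact action on a contractible manifold-with-boundary $\Y(V)$ (the thick part of ``handlebody Teichm\"uller space'' from \cite{hainautpetersen}), and analyse the homotopy type of $\partial\Y(V)$. Two corrections of detail: the boundary turns out to be homotopy equivalent to the \emph{suspension} $\Sigma\compNS$ rather than to $\compNS$ itself (this identification passes through an intermediate combinatorial object, the $\RGB$ poset, which serves as the nerve of the stratification of $\Y(V)$), and the spheres in the wedge have dimension $\dim\Y(V)-d(g,b,p)-1$, not $d(g,b,p)-1$ --- your Lefschetz-duality bookkeeping is off by this swap.

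The more substantive issue is that you have the difficulty backwards. High connectivity of $\compNS$ is \emph{not} the obstacle: since $\hmod(V)\leq\mcg(S)$, Harer's theorem already gives $\operatorname{vcd}\hmod(V)\leq d(g,b,p)$, and Lefschetz duality on $\Y(V)$ converts this bound \emph{directly} into the required acyclicity of $\partial\Y(V)\simeq\Sigma\compNS$ in low degrees. No bad-simplex or Morse-theoretic connectivity argument is needed, and none is given. The real work is the \emph{upper} bound on the homological dimension of $\compNS(V)$. The paper establishes this only for $b=p=0$, by an induction on genus in which each link $\NS(V)_{>D}$ decomposes as a join of $\compNS$-complexes of strictly lower-genus pieces (with $b>0$) together with genus-zero disc complexes, the latter being curve complexes handled by Harer. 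All other values of $(b,p)$ are then deduced from the $b=p=0$ case via the handlebody Birman exact sequence and \cref{prop:SES}, not by any direct analysis of $\compNS$. This bootstrap --- using the known vcd to get connectivity for free, and proving only the dimension bound combinatorially --- is the key idea you are missing; a direct Cohen--Macaulay proof for $\compNS$ as you propose would be substantially harder and is unnecessary.
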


\begin{rem} Let $S$ be the surface with $p$ punctures and $b$ boundary components obtained by removing the marked points and the interiors of the marked discs from $\partial V$. Then $\mcg(V)$ can be identified with the subgroup of $\mcg(S)$ consisting of diffeomorphisms of $S$ that extend to $V$. Hirose \cite{hirose} showed that $\mcg(V)$ has the same virtual cohomological dimension as $\mcg(S)$. The fact that the virtual cohomological dimension of $\mcg(S)$ is  $d(g,b,p)$ is due to Harer \cite{harer}.
\end{rem}

One reason to care about a group being a virtual duality group is that it provides a powerful tool for studying its cohomology in high degrees, close to the virtual cohomological dimension; this is in a sense complementary to the phenomenon of homological stability. However, this is only feasible if one knows a meaningful description of the dualising module. We refer the reader to the survey of Br\"uck  \cite{bruck-survey} for more about this circle of ideas and many pointers to the literature. 

In the case of arithmetic groups, the dualising module is called the \emph{Steinberg module}. It is given by the unique nontrivial reduced homology group of the \emph{Tits building} associated with the arithmetic group \cite{borelserre}. The analogue of the Steinberg module for the mapping class group was also determined in Harer's work \cite{harer}: the dualising module of $\mcg(S)$ is isomorphic to the  unique nontrivial reduced homology group of the \emph{complex of curves} (as a $\mcg(S)$-module). 

The second main theorem of this paper describes the analogue of the Steinberg module for the handlebody group, i.e.\ the dualising module of $\mcg(V)$. We do this only when $g>0$; indeed, the map $\hmod(V)\to\mcg(S)$ is an isomorphism when $g=0$, and in this case we already have a description of the dualising module from Harer's work. 

\begin{defn}By an \emph{essential disc} in $V$ we shall always mean an isotopy class of embedded disc, with boundary in the interior of $S$, whose boundary does not bound an annulus or a (punctured) disc in $S$. By a \emph{disc system} we mean a set of distinct disjointly embeddable essential discs. A disc system is \emph{simple} if it cuts $V$ into pieces of genus zero, and is \emph{non-simple} otherwise. 

The \emph{disc complex} $\compD=\compD(V)$ has vertex set equal to the essential discs in $V$. A set of discs spans a simplex in $\compD$ if and only if they form a disc system in $V$. We let $\compNS = \compNS(V)$ denote the \emph{complex of non-simple disc systems} in $V$, which is the subcomplex $\compNS \subset \compD$ whose $k$-simplices are given by non-simple systems of $k+1$ essential discs in the handlebody $V$. \end{defn}

\begin{thmA}\label{thm:B}Let $2g -2+ b +p > 0$ and $g>0$. Then $\compNS(V)$ is homology equivalent to a wedge of spheres of dimension $\nu(g,b,p)$, where $\nu(g,0,0) = 2g-3$ and $\nu(g,b,p) = 2g-4+b+p$ if $b+p>0$. 
\end{thmA}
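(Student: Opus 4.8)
The plan is to run the handlebody analogue of Harer's argument computing the homotopy type of the curve complex of a surface, with the disc complex $\compD=\compD(V)$ playing the role of the arc complex and $\compNS$ playing the role of the subcomplex of non-filling arc systems. The starting point is that $\compD(V)$ is contractible (this is classical; it is due to McCullough, and can also be proved by a Hatcher-style disc-surgery flow). The next observation is that $\compNS$ is an \emph{honest subcomplex} of $\compD$: cutting a handlebody along a disc can only amalgamate complementary pieces, and amalgamation can only increase genus, so every non-empty subsystem of a non-simple disc system is again non-simple. Hence the long exact sequence of the pair, together with $\tilde H_\ast(\compD)=0$, gives $\tilde H_n(\compNS)\cong \tilde H_{n+1}(\compD,\compNS)$ for all $n$, and the problem is reduced to computing the relative homology $\tilde H_\ast(\compD,\compNS)$, which is carried by the simplices of $\compD$ lying outside $\compNS$, that is, by the \emph{simple} disc systems.

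Next I would set up an induction on the complexity $2g-2+b+p$ and analyze the pair $(\compD,\compNS)$ by the standard ``bad simplices'' / link spectral sequence, building $\compD$ up from $\compNS$ by attaching simple disc systems (or their closed stars) in a carefully chosen order. The key structural fact is that every link in $\compD$ splits as a join over complementary pieces: if a disc system $\Sigma$ cuts $V$ into $V\setminus\Sigma=\bigsqcup_i P_i$, then $\mathrm{lk}_{\compD}(\Sigma)=\ast_i\,\compD(P_i)$, while $\mathrm{lk}_{\compNS}(\Sigma)$ is the union $\bigcup_{i:\, \mathrm{genus}(P_i)>0}\bigl(\compNS(P_i)\ast\ast_{j\neq i}\compD(P_j)\bigr)$. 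When $\Sigma$ is simple all the $P_i$ have genus zero, so the relevant disc complexes are disc complexes of planar handlebodies, i.e.\ (arc-and-)curve complexes of planar surfaces, which by Harer's theorem are wedges of spheres of an explicit dimension; when $\Sigma$ is non-simple some $P_i$ has smaller positive genus, and one feeds in the inductive hypothesis. Since joins of wedges of spheres are again wedges of spheres, and the Mayer--Vietoris patching needed to handle the union above also preserves this class (a union of two contractible complexes along a wedge of $k$-spheres is a wedge of $(k+1)$-spheres, and similarly for iterated unions), one obtains that every link appearing in the argument is a wedge of spheres of a dimension that can be written down explicitly from the combinatorial type of $\Sigma$.

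The final and hardest step is the bookkeeping: one must show that, after all the attachments, the contributions to $\tilde H_\ast(\compD,\compNS)$ — equivalently to $\tilde H_\ast(\compNS)$ — collapse onto the single degree $\nu(g,b,p)$, and in particular that the homology vanishes above this degree even though $\compNS$ itself has simplices of dimension of order $3g$ rather than $2g$ (for $b=p=0$ a maximal non-simple system has $3g-4$ discs, so $\dim\compNS=3g-5$, whereas $\nu(g,0,0)=2g-3$). This degree-concentration is the crux: it is not formal that the links, which occur in many dimensions and are attached along boundaries of simplices of many dimensions, conspire so that the relevant spectral sequence degenerates to a single line. Making this precise seems to require either a delicate analysis of the attachment order (so that the ``redundant'' simple systems — those with contractible link, i.e.\ the non-minimal ones — are collapsed away before they can contribute), combined with an exact accounting of the genus-zero sphere dimensions furnished by Harer against the number of discs in a maximal system; or — the route I would try first — showing that $\compNS$ is homotopy equivalent to a purely combinatorial complex of genus-decorated graphs (the dual graph of a disc system), in the spirit of the tropical moduli space of curves and of the spine of Outer space, where the dimension $\nu(g,b,p)$ is visible directly and the wedge-of-spheres conclusion can be obtained from, or proved by the same techniques as, the known high-connectivity results for those objects. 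I expect this reduction to a graph model, and the ensuing dimension count, to be where essentially all the work lies.
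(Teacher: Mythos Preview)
Your plan correctly isolates the two halves of the problem --- an upper bound on the homological dimension of $\compNS$, and a lower bound (high acyclicity) --- and your link/join analysis for the upper bound is close in spirit to what the paper does in \cref{final prop} and \cref{dimension lemma}. But both halves have genuine gaps relative to the paper's argument.

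\textbf{The connectivity bound.} This is where you are furthest from the paper. You recognise that degree-concentration is ``the crux'' and ``not formal'', and you offer only two vague routes: a delicate attachment-order analysis, or a reduction to a graph-complex model. Neither is carried out, and neither is what the paper does. The paper obtains the acyclicity of $\compNS$ in degrees below $\nu(g,b,p)$ \emph{not} combinatorially at all, but by importing it from outside: one knows already (Hirose, or simply the inclusion $\hmod(V)\hookrightarrow \mcg(S)$ together with Harer) that $\operatorname{vcd}\hmod(V)\le d(g,b,p)$. The paper then constructs a contractible manifold-with-corners $\Y(V)$ of dimension $6g-6+3b+2p$ with a proper cocompact $\hmod(V)$-action, and proves that $\partial\Y(V)\simeq\Sigma\compNS(V)$. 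Lefschetz duality on $\Y(V)$ then converts the vcd inequality directly into the desired vanishing of $\widetilde H_k(\compNS)$ for $k<\nu(g,b,p)$. In other words, the paper deduces \cref{thm:B} and \cref{thm:A} simultaneously from each other via \cref{criterion}; the connectivity of $\compNS$ is an \emph{output} of the duality framework, not something proved by hand on the complex. Your proposal contains no substitute for this step.

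\textbf{The dimension bound.} Here your outline is much closer, but there is a subtle trap that the paper explicitly flags. You propose a single induction on the complexity $2g-2+b+p$, cutting along a system and feeding the pieces back in. The paper observes (in the remark following the proof) that this is exactly off by one when $b+p>0$: the homological dimension of $|\NS(V)|$ is $-\chi(S)-1$ when $b=p=0$ but $-\chi(S)-2$ when $b+p>0$, and cutting always produces pieces with $b+p>0$. The paper therefore runs the Ivanov-style link argument \emph{only} in the case $b=p=0$ (\cref{final prop}), and handles all other values of $(b,p)$ via the Birman exact sequence for handlebodies and \cref{prop:SES}, passing through \cref{thm:A} rather than through $\compNS$ directly. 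Your uniform induction on complexity would not close without this detour.
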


\begin{rem}
    If $g>2$, then $\compNS$ is also simply connected, hence homotopy equivalent to a wedge of spheres. See \cref{prop: simple connectivity}. Although we believe that $\compNS$ is always homotopic to a wedge of spheres, we were unable to show this in general.
\end{rem}

\begin{thmA}\label{thm:C} Keep notation as in \cref{thm:B}. 
    The dualising module of any torsion-free finite index subgroup of $\hmod(V)$ is $\widetilde H_{\nu(g,b,p)}(\compNS(V),\Z)$.
\end{thmA}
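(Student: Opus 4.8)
The plan is to combine Theorem B with the known cohomological structure of the handlebody group, using a Poincaré–Lefschetz style duality argument on a suitable contractible space on which $\hmod(V)$ acts with good finiteness properties. By work of McCullough, $\hmod(V)$ virtually has a finite classifying space, so let $\Gamma \subset \hmod(V)$ be a torsion-free finite-index subgroup with $\mathrm{vcd}(\Gamma) = d(g,b,p)$. The standard strategy to show $\Gamma$ is a duality group with a prescribed dualising module $D$ is to exhibit a contractible $\Gamma$-CW-complex $X$ of dimension $d$ on which $\Gamma$ acts freely and cocompactly, together with a $\Gamma$-equivariant identification of $X$ with the complement (or a deformation retract thereof) of an open neighbourhood of a subcomplex $Y \subset \widehat X$ inside a contractible space $\widehat X$, such that the pair $(\widehat X, Y)$ behaves like a manifold-with-boundary: then $H^k(\Gamma; A) \cong H^{BM}_{d-k}$ of the interior, which by excision and the long exact sequence of the pair is controlled by $\widetilde H_{*-1}(Y)$. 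Here the natural candidate for $\widehat X$ is (a truncation of) Teichmüller space, or rather the analogue used in the handlebody setting, and the role of the "sphere at infinity" is played by the disc complex $\compD(V)$; the non-simple disc systems $\compNS(V)$ cut out the part of the boundary that is \emph{not} seen by the ordinary duality for $\mcg(S)$.

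First I would set up the Borel–Serre-type bordification: following Harer's treatment of $\mcg(S)$, there is a contractible space (arc complex / Teichmüller space with a spine) on which $\mcg(S)$, and hence $\Gamma$, acts, whose bordification $\widehat X$ is contractible with free cocompact $\Gamma$-action and whose boundary $\partial \widehat X$ is $\mcg(S)$-equivariantly homotopy equivalent to the curve complex; this is what yields that $\mcg(S)$ is a virtual duality group with dualising module $\widetilde H_*$ of the curve complex. The new ingredient is to stratify $\partial \widehat X$ according to whether the degenerating multicurve is or is not realised by a disc system in $V$, and whether that disc system is simple. Second, I would show that the "simple" part of the boundary — the locus where $V$ is cut into genus-zero pieces — together with the interior assembles into a contractible space $X'$ of dimension exactly $d(g,b,p)$ with free cocompact $\Gamma$-action (this is where the dimension count, and the discrepancy between the $g=0$ and $g>0$ formulas, enters; note $\nu + d$ relates to the dimension of $\compD$). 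Third, by excision and the long exact sequence of the pair $(X', \text{interior})$, the Bieri–Eckmann criterion reduces the duality statement, and the identification of $D$, to computing the reduced homology of the quotient $\partial \widehat X / (\text{simple part})$, which deformation retracts onto $\compNS(V)$. Theorem B then supplies that this homology is concentrated in degree $\nu(g,b,p)$ and torsion-free, so the Bieri–Eckmann machine (a group of type $FP$ acting appropriately with the homology of the relevant complex concentrated in a single degree) gives that $\Gamma$ is a duality group of dimension $d$ with dualising module $\widetilde H_{\nu(g,b,p)}(\compNS(V);\Z)$; since this module is independent of the choice of $\Gamma$ and carries a compatible $\hmod(V)$-action, it serves as the dualising module for every such $\Gamma$ simultaneously.

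The main obstacle I anticipate is not the homological bookkeeping but establishing that the relevant subspace of the bordification — the union of the interior with the simple strata of the boundary — is genuinely contractible, of the correct dimension, and carries a cocompact $\Gamma$-action, i.e.\ that "cutting off" the non-simple part of the boundary behaves as cleanly as in the manifold-with-corners picture. Concretely, one must control how the strata indexed by disc systems fit together inside Harer's bordification: simple disc systems have a different combinatorial poset structure than the full arc/curve complex, and one needs the nerve-type or "link" arguments showing the inclusion of the simple locus is a homotopy equivalence onto a contractible subcomplex, so that collapsing it leaves exactly the suspension-shifted homology of $\compNS(V)$. A clean way to package this is probably as an isomorphism of the form $H^{d-k}_c(X') \cong \widetilde H_{k-1}(\compNS(V))$ proved by comparing two filtrations of $\widehat X$, one by the interior versus the whole boundary (recovering Harer's curve-complex duality) and one by $X'$ versus the non-simple boundary; the compatibility of these two, together with the fact that the curve complex is \emph{itself} Cohen–Macaulay, forces the homology of $\compNS(V)$ into the single degree predicted by Theorem B. Once that geometric input is in place, invoking Bieri–Eckmann and transporting the result from $\Gamma$ to an arbitrary torsion-free finite-index subgroup is formal.
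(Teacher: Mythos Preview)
Your proposal has the right overall shape---find a contractible manifold-with-boundary on which a torsion-free finite-index subgroup $\Gamma$ acts freely and cocompactly, identify the homotopy type of the boundary, and apply the Bieri--Eckmann criterion---but there is a genuine gap at the most important step. You propose to take $\widehat X$ to be the Harvey bordification (or thick Teichm\"uller space), whose boundary is the curve complex, and then to let $X'$ be the interior together with the ``simple'' boundary strata. The problem is that $\hmod(V)$ has \emph{infinite index} in $\mcg(S)$, so $\Gamma$ does not act cocompactly on $\widehat X$, nor on $X'$: already $\T_\eps(S)/\Gamma$ is an infinite cover of the compact thick moduli space and hence noncompact, and gluing on some boundary strata indexed by simple disc systems cannot repair this. (Relatedly, your dimension claim for $X'$ is off: any such $X'$ containing the interior of Teichm\"uller space has dimension $6g-6+3b+2p$, not $d(g,b,p)$; and the disc complex $\compD(V)$ is contractible, so it cannot itself play the role of a ``sphere at infinity''.)

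The paper's solution is precisely to replace $\widehat X$ by a different contractible manifold-with-corners $\Y(V)$, the Hainaut--Petersen ``handlebody Teichm\"uller space'': the locus in $\eps$-thick Teichm\"uller space where every closed geodesic of length $\le \eps'$ is a meridian and these meridians form a \emph{simple} disc system. Restricting which curves are permitted to be short is exactly what forces cocompactness of the $\hmod(V)$-action, since $\hmod(V)$ acts with finitely many orbits on simple disc systems. Contractibility of $\Y(V)$ is a substantial input (from \cite{hainautpetersen}, ultimately resting on contractibility of the simple-disc-system complex). With this in hand, the paper then identifies $\partial\Y(V)\simeq\Sigma\,\compNS(V)$ via a combinatorial analysis (the ``$\RGB$'' poset stratification, decomposing the boundary into two contractible pieces whose intersection is $\compNS(V)$), after which the Bieri--Eckmann criterion and \cref{thm:B} give the result. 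Your anticipated ``main obstacle''---showing contractibility of the simple locus---is indeed real, but the cocompactness issue is prior to it and is what forces one to abandon the Harvey bordification altogether.
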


\begin{rem}
    The preceding theorem is correct also in the exceptional case $g=1$, $b+p=1$, if one uses the convention that the reduced homology of the empty space is $\Z$ in degree $-1$. 
\end{rem}
\begin{rem}\label{rem: connection to outer space}
The appearance of the simplicial complex $\compNS$ is striking, especially in relation to Culler--Vogtmann's \emph{Outer space} \cite{cv}. Let us expand on this connection, assuming for simplicity that $V$ has genus $g\geq2$ and $b=p=0$.  A disc in $V$ gives rise to a sphere in the double $M$ of $V$, which is the connected sum of $g$ copies of $S^1 \times S^2$, and $\pi_1(V)\cong\pi_1(M)\cong F_g$.  The action of $\mcg(V)$ on $\pi_1(V)$ induces a surjective homomorphism $\hmod(V) \to \out(F_g)$.

Doubling discs induces a natural map $f:\compD \to \Sph$ to the \emph{sphere complex} of $M$, which is equivariant under the map $\hmod(V) \to \out(F_g)$, and also surjective. In the same manner as above, one defines a sphere system to be \emph{simple} if it cuts the doubled handlebody into simply connected pieces. We denote the subcomplex of non-simple sphere systems by $\partial \CV \subset \Sph$, so that $f^{-1}(\partial \CV) = \compNS$. The complement $\Sph \setminus \partial \CV$ is precisely Outer space. Thus $\compNS$ is a close analogue of the simplicial boundary of Outer space.  

Theorem~\ref{thm:B} gives the homotopy type of $\compNS$ and \cref{thm:C} that its homology is the dualising module of $\hmod(V)$. We do not know the homotopy type of $\partial \CV$, nor do we know its relationship with the dualising module of $\out(F_g)$ (this is explored further in \cite{WW}). Br\"uck and Gupta \cite[Theorem 2]{bruck-gupta} have shown that $\partial \CV$  is homotopy equivalent to the realization of the poset of \emph{free factor systems} of rank $g$, and that it is at least $(g-2)$-connected. Vogtmann \cite{vogtmann} has shown that a reduced version $\partial \CV^r$ is homotopy equivalent to the \emph{boundary of Jewel space}. By \cite{bsv}, this reduced boundary  $\partial \CV^r$ is also the boundary of the bordification of reduced Outer space used in Bestvina and Feighn's proof that $\out(F_g)$ is a virtual duality group. Beyond $\partial \CV$ and $\partial \CV^r$, a third analogue of the Tits building in this context is the free factor complex, which in general is known not to present the dualising module \cite{hmnp}. 

We remark that the distinction between reduced and unreduced Outer space is often blurred in the literature. When considering their simplicial boundary, it is important to distinguish between them: $\partial \CV \not\simeq \partial\CV^r $.

\end{rem}

\subsection{An outline of the proofs}

To ease the outline, assume $V$ satisfies $g\geq 2$ and $b=p=0$. A group $G$ is a virtual duality group if it satisfies the following three conditions (Corollary~\ref{criterion} below): 
\begin{enumerate}
\item $G$ is virtually torsion-free.
\item $G$ acts properly and cocompactly on a contractible
orientable $d$-dimensional topological manifold-with-boundary $M$.
\item $\partial M$ is homology equivalent to a wedge of spheres of constant dimension.
\end{enumerate}
If (1)---(3) hold, then $\widetilde H_\ast(\partial M,\Z)$ is the dualising module of $G$. We apply this to $G=\hmod(V)$. The fact that $\hmod(V)$ is virtually torsion-free follows from the fact that it is a subgroup of $\mcg(S)$, which is virtually torsion-free \cite[Theorem~6.9]{farbmargalit}. 

Recall that a \emph{meridian} on $S=\partial V$ is a curve bounding a disc in $V$. Discs are determined up to isotopy by their meridians. We pick constants $\eps < \eps ' < \log(3+\sqrt{8})$ and take $M$ to be the subspace of the Teichm\"uller space of $S$ consisting of marked surfaces $\sigma$ where each simple closed curve on $\sigma$ has length at least $\eps$, and the set of curves of length at most $\eps'$ on $\sigma$ form a \emph{simple system of meridians} (the requirement $\eps ' < \log(3+\sqrt{8})$ ensures that the set of curves of length at most $\eps'$ are disjoint). The space $M$ is a truncated cover of the orbifold classifying space of $\hmod(V)$ constructed by Hainaut and the first author in \cite{hainautpetersen}. The work in \cite{hainautpetersen} implies that $M$ is a contractible manifold with boundary (Theorem~\ref{hp}), and the truncation ensures the action is cocompact. 

We can write $\partial M = N_1 \cup N_2$, where $N_1$ parametrises hyperbolic surfaces with a closed geodesic of length $\eps$, and $N_2$ parametrises hyperbolic surfaces with the property that the closed geodesics of length \emph{strictly less} than $\eps'$ form a \emph{non-simple} system of meridians. We show that:
\begin{enumerate}[(i)]
\item $N_1$ is contractible,
\item $N_2$ is contractible,
\item $N_1 \cap N_2$ is homotopy equivalent to $\compNS$. 
\end{enumerate}
Consequently, $\partial M$ is homotopy equivalent to the suspension $\Sigma \compNS$. The results (i)---(iii) are established by considerations similar to those of Borel--Serre \cite{borelserre} and Harer \cite{harer}: we cover $\partial M$ by closed contractible pieces, and deduce that $\partial M$ is homotopy equivalent to the nerve of the covering. 
Concretely, we obtain a poset --- we denote it $\dRGB$ --- such that $|\dRGB| \simeq \partial M$, and such that $N_1$, $N_2$, and $N_1 \cap N_2$ are homotopic to realizations of subposets of $\dRGB$. These subposets are analyzed using poset arguments in the spirit of Quillen \cite{quillen-homotopyproperties} to show (i)---(iii). 

After the above, Theorems~\ref{thm:A}, \ref{thm:B}, and ~\ref{thm:C} all follow after showing that $\compNS$ is homology equivalent to a wedge sum of copies of $S^{2g-3}$. This is achieved via two steps:
\begin{itemize}
    \item $\compNS$ is homologically $(2g-4)$-connected.
    \item $\compNS$ is homology equivalent to a $(2g-3)$-dimensional CW complex.
\end{itemize}The first bullet point is easy: we already know that the virtual cohomological dimension of $\mcg(V)$ is $4g-5$, so the compactly supported cohomology $H^\ast_c(M,\Z)$ of $M$ vanishes for $\ast > 4g-5$. As $M$ is $(6g-6)$-dimensional, Lefschetz duality implies that $\partial M \simeq \Sigma \compNS$ is $(2g-3)$-acyclic. 

The second bullet point is carried out by an adaptation of the strategy used  in the work of Ivanov \cite{ivanov} to prove that the curve complex is homotopy equivalent to a CW complex of dimension at most $2g-2$.

\subsection{Acknowledgements}
We thank Benjamin Br\"uck for bringing this problem to our attention. We thank Sebastian Hensel and Andrew Putman for many helpful discussions and their involvement in the early stages of this project. We thank Karen Vogtmann for kindly sharing an early version of \cite{vogtmann}. 

Petersen is supported by a Wallenberg Scholar fellowship. Wade is supported by the Royal Society of Great Britain through a University Research Fellowship.

\section{Background on (virtual) duality groups}

In this section we recall some well-known facts about duality groups, for future reference and for the reader's convenience. The notion of a duality group was introduced by Bieri and Eckmann \cite{be}, and nearly all that follows can be found in their original article. 

\begin{defn}
    Let $G$ be a group. We say that $G$ is a \emph{duality group} of \emph{cohomological dimension} $n$, with \emph{dualising module} $C$, if there exists a class $e \in H_n(G,C)$ such that cap-product with $e$ defines a natural isomorphism of functors 
    \[ H^k(G,-) \stackrel \cong \longrightarrow H_{n-k}(G,C \otimes_\Z - )\]
    for all integers $k$. 
\end{defn}

\begin{rem}If $C=\mathbb{Z}$ (as an abelian group, possibly with nontrivial $G$-action), then one arrives at the special case of a \emph{Poincar\'e duality group}. A source of geometric examples of Poincar\'e duality groups of dimension $n$ are the fundamental groups of aspherical closed manifolds of dimension $n$, in which case $C$ is the rank one local system given by the orientation sheaf. Allowing $C$ to have larger rank leads to a significantly richer theory. \end{rem}

Duality for a group of type $\mathit{FP}$ can be detected by cohomology with coefficients in the group ring.

\begin{thm}\label{b-e criterion}
    The following two properties are equivalent for a group $G$.
    \begin{enumerate}
        \item $G$ is a duality group of dimension $n$.
        \item $G$ is of type $\mathit{FP}$, $H^*(G,\Z[G])$ is torsion-free, and $H^k(G,\Z[G])=0$ unless $k=n$.
    \end{enumerate}
When this holds, $H^n(G,\Z[G])$ is the dualising module of $G$. 
\end{thm}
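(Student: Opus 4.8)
The plan is to follow Bieri and Eckmann \cite{be}. Both directions hinge on the following, available once $G$ is known to be of type $\mathit{FP}$ --- a hypothesis in (2), and, as we check below, a consequence of (1). Fix a finite-length resolution $P_\bullet \to \Z$ of the trivial module by finitely generated projective left $\Z[G]$-modules and set $D^\bullet := \operatorname{Hom}_{\Z[G]}(P_\bullet,\Z[G])$, a cochain complex of finitely generated projective \emph{right} $\Z[G]$-modules with $H^\ast(D^\bullet) = H^\ast(G,\Z[G])$. The one piece of homological algebra needed is the natural isomorphism $\operatorname{Hom}_{\Z[G]}(P,M) \cong \operatorname{Hom}_{\Z[G]}(P,\Z[G]) \otimes_{\Z[G]} M$ for $P$ finitely generated projective and $M$ an arbitrary left module (check it for $P=\Z[G]$ and extend by additivity); applied degreewise it yields natural isomorphisms $\operatorname{Hom}_{\Z[G]}(P_\bullet,M) \cong D^\bullet \otimes_{\Z[G]} M$, and hence $H^k(G,M) \cong H^k(D^\bullet \otimes_{\Z[G]} M)$ for all $k$.

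\textbf{(2)$\,\Rightarrow\,$(1).} Assume $H^\ast(G,\Z[G])$ is concentrated in degree $n$, where it equals the torsion-free abelian group $C := H^n(G,\Z[G])$. Re-indexing $D^\bullet$ as a chain complex via $D_j := D^{\,n-j}$, the hypothesis says precisely that $D_\bullet \to C$ is a resolution of the right $\Z[G]$-module $C$ by finitely generated projectives; hence $H^k(G,M) \cong H_{n-k}(D_\bullet \otimes_{\Z[G]} M) \cong \operatorname{Tor}^{\Z[G]}_{n-k}(C,M)$, naturally in $M$. Since $C$ is torsion-free it is flat over $\Z$ (as $\Z$ is a principal ideal domain), so the standard flat base-change isomorphism identifies $\operatorname{Tor}^{\Z[G]}_\ast(C,M)$ with $H_\ast(G, C \otimes_\Z M)$ for the diagonal $G$-action, with $C$ regarded as a right module via $c\cdot g := g^{-1}c$. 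Composing, we obtain natural isomorphisms $H^k(G,M) \cong H_{n-k}(G, C \otimes_\Z M)$; the class $e$ realising these as a cap product is extracted by evaluating the isomorphism at $M=C$ and tracing the identity, naturality then promoting the construction to cap-product-with-$e$. Thus $G$ is a duality group of dimension $n$ with dualising module $C$.

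\textbf{(1)$\,\Rightarrow\,$(2).} Suppose cap product with $e \in H^n(G,C)$ gives natural isomorphisms $H^k(G,-) \cong H_{n-k}(G, C\otimes_\Z -)$. Then $H^k(G,-) = 0$ for $k>n$, so $\operatorname{cd}(G) \le n$; moreover $H^k(G,-)$ is naturally isomorphic to the composite of $C\otimes_\Z -$ with group homology, both of which commute with filtered colimits, so each $H^k(G,-)$ does too, whence $G$ is of type $\mathit{FP}_\infty$ and, together with $\operatorname{cd}(G)<\infty$, of type $\mathit{FP}$ --- which legitimises the construction of $D^\bullet$ used below. Taking $M=\Z[G]$, the diagonal module $C\otimes_\Z \Z[G]$ is induced, $C\otimes_\Z\Z[G] \cong \operatorname{Ind}^G_1 C$, so Shapiro's lemma gives $H_{n-k}(G, C\otimes_\Z\Z[G]) \cong H_{n-k}(\{1\},C)$, which is $0$ for $k\neq n$ and $C$ for $k=n$; thus $H^\ast(G,\Z[G])$ is concentrated in degree $n$ with value $C$. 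It remains to see that $C$ is torsion-free: apply the duality isomorphism in degree $n-1$ with $M = \Z/m \otimes_\Z \Z[G] \cong \operatorname{Ind}^G_1(\Z/m)$. On the one hand, $C\otimes_\Z\Z/m\otimes_\Z\Z[G] \cong \operatorname{Ind}^G_1(C\otimes_\Z\Z/m)$, so the right-hand side is $H_1(G,\operatorname{Ind}^G_1(C\otimes_\Z\Z/m)) \cong H_1(\{1\}, C\otimes_\Z\Z/m) = 0$. On the other hand, via $H^{n-1}(G,-) \cong H^{n-1}(D^\bullet\otimes_{\Z[G]}-)$, the identification $D^\bullet \otimes_{\Z[G]} \operatorname{Ind}^G_1(\Z/m) \cong D^\bullet \otimes_\Z \Z/m$, and the universal coefficient sequence (using $H^{n-1}(D^\bullet) = H^{n-1}(G,\Z[G]) = 0$), the left-hand side equals $\operatorname{Tor}^\Z_1(C,\Z/m) = C[m]$. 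Hence $C[m]=0$ for all $m\ge1$, so $C$ is torsion-free.

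\textbf{Main obstacle.} There is no genuine difficulty here: the result is classical and every step above is a standard manoeuvre in the homological algebra of groups; I would cite \cite{be} for the precise finiteness-condition facts and for the explicit form of the fundamental class. The step that really requires thought is the torsion-freeness of $C$ in (1)$\,\Rightarrow\,$(2) --- the only place where a property of the dualising module must be deduced ``in reverse'', and it comes out not by inspection but by pairing Shapiro's lemma with the universal coefficient theorem. A persistent minor nuisance is keeping the left- versus right-$\Z[G]$-module bookkeeping (on $\Z[G]$, on $D^\bullet$, and on $C$) straight throughout.
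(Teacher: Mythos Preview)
Your proof is correct and follows precisely the Bieri--Eckmann approach that the paper cites (with Brown's criterion for type $\mathit{FP}_\infty$ via commutation with filtered colimits supplying the one ingredient the paper attributes separately to \cite{brown-finiteness}); you have simply unpacked the references where the paper is content to point to them. The only place worth tightening is the extraction of the fundamental class $e$ in (2)$\Rightarrow$(1): rather than ``tracing the identity at $M=C$'', it is cleaner to take $e\in H^n(G,C)$ to be the image of $1\in H^0(G,\Z)$ under your isomorphism with $M=\Z$ (read backwards), and then verify that the isomorphism you built agrees with $\,\cap\, e$ by checking it on the universal case.
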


\begin{proof}
    All parts of the implication (1) $\Rightarrow$ (2) is proven in \cite[Section 1]{be}, except for $G$ necessarily being of type $\mathit{FP}$, which is due to Brown \cite{brown-finiteness}. The implication (2) $\Rightarrow$ (1) is \cite[Theorem 4.5]{be}.
\end{proof}

\cref{b-e criterion} implies in particular that the dualising module and the dimension are uniquely determined by the group $G$.

Duality groups have finite homological dimension and are in particular torsion-free. The theory is however still useful for groups which are merely virtually torsion-free; namely, one is led to the following definition:

\begin{defn}
    A group $G$ is a \emph{virtual duality group} if it admits a finite index subgroup which is a duality group. 
\end{defn}

\begin{prop}\label{finite index}
    Let $G$ be a torsion-free group. Then $G$ is a duality group if and only if any one of its finite-index subgroups is a duality group. The dualising module of each finite-index subgroup is then canonically isomorphic. 
\end{prop}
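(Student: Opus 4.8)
The plan is to deduce everything from the cohomological characterisation of duality groups in \cref{b-e criterion}, so that the statement becomes a comparison of $H^*(G,\Z[G])$ with $H^*(H,\Z[H])$ for $H\le G$ of finite index. First I would recall the standard Shapiro-type fact that if $H$ has finite index $m$ in $G$, then $\Z[G]$ is finitely generated free as a $\Z[H]$-module, so that $H^*(G,\Z[G])\cong H^*(H,\operatorname{Res}^G_H\Z[G])\cong H^*(H,\Z[H])^{\oplus m}$ as graded abelian groups (indeed as $\Z[H]$-modules, after choosing coset representatives). In particular $H^*(G,\Z[G])$ is torsion-free and concentrated in a single degree $n$ if and only if the same holds for $H^*(H,\Z[H])$, and the common degree $n$ agrees. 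I would also note that being of type $\mathit{FP}$ passes between a group and its finite-index subgroups in both directions (Serre), since all groups in sight are torsion-free and hence have finite cohomological dimension; this is where torsion-freeness of $G$ is used — without it $G$ could have infinite cohomological dimension while $H$ does not.

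Concretely, the steps are: (1) assume $G$ is a duality group of dimension $n$; by \cref{b-e criterion} it is $\mathit{FP}$ with $H^k(G,\Z[G])$ torsion-free and zero for $k\ne n$. Then $H\le G$ of finite index is also $\mathit{FP}$, and by the Shapiro computation above $H^k(H,\Z[H])$ is a direct summand of $H^k(G,\operatorname{Ind}_H^G\Z[H])$; more precisely, since $\operatorname{Res}^G_H \Z[G]\cong \Z[H]^{\oplus m}$, we get $H^k(G,\Z[G])\cong H^k(H,\Z[H])^{\oplus m}$, so $H^k(H,\Z[H])$ is torsion-free (a subgroup of a torsion-free group) and vanishes unless $k=n$. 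Hence $H$ is a duality group of dimension $n$ by \cref{b-e criterion}. (2) Conversely, assume some finite-index subgroup $H$ is a duality group of dimension $n$. Replacing $H$ by the intersection of its finitely many conjugates, which is normal of finite index in $G$ and still a duality group of dimension $n$ by step (1) applied inside $H$, we may assume $H\trianglelefteq G$. Then $G$ is $\mathit{FP}$ (finite-index overgroup of an $\mathit{FP}$ group), and $H^k(G,\Z[G])\cong H^k(H,\Z[H])^{\oplus m}$ is torsion-free and concentrated in degree $n$, so $G$ is a duality group of dimension $n$.

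For the last sentence — that the dualising modules are canonically isomorphic — I would argue as follows. For $H$ of finite index in $G$, both $H^n(G,\Z[G])$ and $H^n(H,\Z[H])$ are the respective dualising modules by \cref{b-e criterion}. The inclusion $\Z[H]\hookrightarrow \Z[G]$ of $\Z[H]$-modules induces a map $H^n(H,\Z[H])\to H^n(H,\Z[G])\cong H^n(G,\Z[G])$, where the last isomorphism is Shapiro's lemma; I would check this is an isomorphism by comparing with the splitting $\Z[G]\cong \Z[H]^{\oplus m}$, under which exactly one summand is the canonical copy of $\Z[H]$. This yields a canonical isomorphism of the two cohomology groups, and it is $H$-equivariant by naturality; moreover it does not depend on auxiliary choices because the inclusion $\Z[H]\hookrightarrow\Z[G]$ is canonical. (When both $H$ and $H'$ are finite-index, one passes through $H\cap H'$.)

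The main obstacle, such as it is, is purely bookkeeping: making the identification $H^*(G,\Z[G])\cong H^*(H,\Z[H])^{\oplus m}$ precise enough — as $\Z[H]$-modules and compatibly with the cap-product / fundamental class — that the final "canonical" claim is genuinely canonical and not just "an isomorphism exists". Everything else is a direct invocation of \cref{b-e criterion} together with the standard stability of type $\mathit{FP}$ and of cohomological dimension under passage to finite-index sub- and over-groups (the latter requiring torsion-freeness, which is hypothesised). I do not expect any genuinely hard point.
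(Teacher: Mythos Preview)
The paper's own ``proof'' is just a citation to \cite[Theorems~3.2 and~3.3]{be}, so your proposal is necessarily more detailed. Your overall strategy --- reduce everything to \cref{b-e criterion} and compare $H^*(G,\Z[G])$ with $H^*(H,\Z[H])$ via Shapiro, together with stability of type $\mathit{FP}$ under finite-index passage --- is the right one and is essentially what Bieri--Eckmann do.

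There is, however, a real slip in your Shapiro computation. You assert $H^*(G,\Z[G])\cong H^*(H,\operatorname{Res}^G_H\Z[G])$ and hence $H^*(G,\Z[G])\cong H^*(H,\Z[H])^{\oplus m}$. The first isomorphism is not a form of Shapiro's lemma and is false in general: applying Shapiro correctly gives
\[
H^*(H,\operatorname{Res}^G_H\Z[G])\ \cong\ H^*\!\bigl(G,\operatorname{Coind}_H^G\operatorname{Res}^G_H\Z[G]\bigr)\ \cong\ H^*(G,\Z[G])^{\oplus m},
\]
so your chain really yields $H^*(G,\Z[G])^{\oplus m}\cong H^*(H,\Z[H])^{\oplus m}$, not what you wrote. (Take $G=\Z$, $H=2\Z$ for a sanity check.) The correct and cleaner statement is $H^*(G,\Z[G])\cong H^*(H,\Z[H])$ with \emph{no} power of $m$, obtained by applying Shapiro directly to $\Z[G]\cong\operatorname{Coind}_H^G\Z[H]$; equivalently, both sides compute the compactly supported cohomology of a common cocompact contractible $G$-space. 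This slip does not damage the ``if and only if'' --- torsion-freeness and concentration in a single degree pass through $(-)^{\oplus m}$ in both directions --- but it does muddle your final paragraph on the canonical identification of dualising modules, where you invoke Shapiro in the same mistaken direction. With the corrected isomorphism that paragraph becomes immediate and genuinely canonical, without any need to single out ``one summand''.
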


\begin{proof}
    \cite[Theorem 3.2]{be} and \cite[Theorem 3.3]{be}.
\end{proof}

\cref{finite index} thus says that if \emph{some} finite index subgroup of a group $G$ is a duality group, then \emph{all} torsion-free finite index subgroups of $G$ are duality groups, and they all have the same cohomological dimension and dualising module. In particular, the dimension and dualising module of a virtual duality group are well-defined.

\begin{prop}\label{prop:SES}
    Let $1 \to K \to G \to H \to 1$ be a short exact sequence of groups. If $K$ is a duality group of dimension $k$ and $H$ is a (virtual) duality group of dimension $l$, then $G$ is a (virtual) duality group of dimension $k+l$, and the dualising module for $G$ is the tensor product of the dualising modules for $K$ and $H$.
\end{prop}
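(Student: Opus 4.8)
The plan is to deduce this from the cohomological criterion of \cref{b-e criterion}, using the Lyndon--Hochschild--Serre spectral sequence of the extension. First I would dispose of the virtual case: if $H$ is merely a virtual duality group, pick a finite-index subgroup $H_0\le H$ that is an honest duality group of dimension $l$ and let $G_0\le G$ be its preimage, a finite-index subgroup of $G$. Then $1\to K\to G_0\to H_0\to 1$ is again an extension of the required form, and once $G_0$ is shown to be a duality group of dimension $k+l$ the group $G$ is a virtual duality group, with dualising module (as such) that of $G_0$. So it suffices to treat the case where both $K$ and $H$ are duality groups, with dualising modules $D_K=H^k(K,\Z[K])$ and $D_H=H^l(H,\Z[H])$.

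Next I would check that $G$ is of type $\mathit{FP}$: both $K$ and $H$ are of type $\mathit{FP}$ by \cref{b-e criterion}, and an extension of an $\mathit{FP}$ group by an $\mathit{FP}$ group is again of type $\mathit{FP}$ (a standard fact about groups of type $\mathit{FP}$). Then I would analyse $H^\ast(G,\Z[G])$ through the spectral sequence
\[ E_2^{p,q}=H^p\bigl(H,H^q(K,\Z[G])\bigr)\ \Longrightarrow\ H^{p+q}(G,\Z[G]). \]
As a left $\Z[K]$-module, $\Z[G]$ is free on a set of coset representatives, so $\Z[G]\cong\bigoplus_{H}\Z[K]$; since $K$ is of type $\mathit{FP}$, its cohomology commutes with arbitrary direct sums of coefficient modules, whence $H^q(K,\Z[G])=0$ for $q\neq k$ and $H^k(K,\Z[G])\cong\bigoplus_H D_K$. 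Choosing a set-theoretic section of $G\to H$ identifies this, together with its residual $G/K=H$-action, with $D_K\otimes_\Z\Z[H]$ carrying the diagonal action. Hence the spectral sequence collapses at $E_2$ and $H^{n}(G,\Z[G])\cong H^{n-k}\bigl(H,D_K\otimes_\Z\Z[H]\bigr)$.

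To finish I would compute the right-hand side. Applying the untwisting isomorphism $D_K\otimes_\Z\Z[H]\cong (D_K)_{\mathrm{triv}}\otimes_\Z\Z[H]$ (trivial action on the first factor, regular action on the second) and a finite-length resolution $P_\bullet\to\Z$ by finitely generated projective $\Z[H]$-modules, one gets $H^\ast\bigl(H,D_K\otimes_\Z\Z[H]\bigr)\cong H^\ast(P_\bullet^\vee\otimes_\Z D_K)$, where $P_\bullet^\vee=\mathrm{Hom}_{\Z[H]}(P_\bullet,\Z[H])$ is a bounded complex of free abelian groups whose cohomology is $H^\ast(H,\Z[H])$. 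Because $H$ is a duality group, this cohomology is concentrated in degree $l$ and equal to $D_H$, which is torsion-free, hence flat over $\Z$; so $P_\bullet^\vee$ is quasi-isomorphic, over $\Z$, to $D_H$ placed in degree $l$, and tensoring the flat complex $P_\bullet^\vee$ with $D_K$ preserves this cohomology. Therefore $H^n(G,\Z[G])=0$ for $n\neq k+l$ and $H^{k+l}(G,\Z[G])\cong D_K\otimes_\Z D_H$, which is torsion-free since a tensor product of torsion-free abelian groups is torsion-free. By \cref{b-e criterion}, $G$ is a duality group of dimension $k+l$ with dualising module $D_K\otimes_\Z D_H$, as claimed.

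I expect the main obstacle to be the bookkeeping of module structures in the penultimate step: correctly identifying $H^k(K,\Z[G])$, with its residual $H$-action, as the $H$-module $D_K\otimes_\Z\Z[H]$ with the diagonal action, and then performing the untwisting that converts this into the regular $\Z[H]$-module so that the spectral-sequence output becomes a manifestly computable group cohomology. Everything else is a formal assembly of known finiteness properties, the spectral sequence, and \cref{b-e criterion}.
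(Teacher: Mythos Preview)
Your reduction of the virtual case to the honest-duality case is exactly what the paper does. For the non-virtual case, the paper simply cites \cite[Theorem~3.5]{be} and moves on; you have instead written out a proof via the Lyndon--Hochschild--Serre spectral sequence and the cohomological criterion of \cref{b-e criterion}. Your argument is correct, and is in fact essentially Bieri--Eckmann's own proof of that theorem: identify $H^q(K,\Z[G])$, collapse the spectral sequence to a single row, untwist to compute $H^\ast(H,D_K\otimes_\Z\Z[H])$, and appeal to flatness of $D_H$. So you are supplying the details the paper defers to the literature.

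One minor point worth tightening: you compute $H^{k+l}(G,\Z[G])\cong D_K\otimes_\Z D_H$ as abelian groups but do not explicitly track the $G$-module structure through the untwisting and the K\"unneth step. The claimed dualising module carries the diagonal $G$-action (with $G$ acting on $D_K$ via the conjugation action on $K$ and on $D_H$ via $G\to H$); this is implicit in your identifications but, as you yourself flag, the bookkeeping deserves a sentence.
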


\begin{proof}For actual duality groups this is \cite[Theorem 3.5]{be}. The virtual case follows by choosing a finite-index duality subgroup $H' \leq H$, taking $G'$ to be the preimage of $H'$ in $G$, and considering $1 \to K \to G' \to H' \to 1$. 
\end{proof}

\begin{prop}\label{action}
    Let $G$ be a group acting freely, properly and cocompactly on a contractible orientable $d$-dimensional topological manifold-with-boundary $M$. Then there are isomorphisms
    $$ H^k(G,\Z[G]) \cong \widetilde H_{d-1-k}(\partial M,\Z)$$
    for all $k$. 
\end{prop}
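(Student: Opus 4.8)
The plan is to use Poincaré–Lefschetz duality for the manifold-with-boundary $M$, combined with the standard identification of group cohomology with coefficients in $\Z[G]$ in terms of compactly-supported cohomology of the classifying space. Since $G$ acts freely, properly and cocompactly on the contractible space $M$, the quotient $M/G$ is a compact $K(G,1)$ (a manifold-with-boundary, or more precisely a space with the homotopy type of a finite CW complex), so $G$ is of type $\mathit{FP}$ and one has the familiar isomorphism
\[ H^k(G,\Z[G]) \cong H^k_c(M,\Z), \]
where the right-hand side is compactly-supported cohomology of $M$ with constant coefficients. This follows from the fact that $C^\ast_c(M,\Z)$ computes the cohomology of the cochain complex $\operatorname{Hom}_{\Z[G]}(C_\ast(M),\Z[G])$ once one picks a $G$-CW structure on $M$ with finitely many orbits of cells, using that $M$ is contractible so $C_\ast(M)$ is a free resolution of $\Z$ over $\Z[G]$.

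The second ingredient is Lefschetz duality. The manifold $M$ is orientable and $d$-dimensional with boundary $\partial M$; fixing an orientation gives an isomorphism
\[ H^k_c(M,\Z) \cong H_{d-k}(M,\partial M;\Z). \]
(One should be slightly careful here: $M$ is only assumed to be a topological manifold-with-boundary, so one invokes the topological version of Lefschetz duality, e.g. via the fact that a topological manifold-with-boundary has a fundamental class in Borel–Moore homology relative to the boundary and cap product with it is an isomorphism; the collar neighbourhood theorem for topological manifolds guarantees $\partial M$ is collared, which is all that is needed.) Finally, the long exact sequence of the pair $(M,\partial M)$ together with the contractibility of $M$ gives $H_{d-k}(M,\partial M;\Z) \cong \widetilde H_{d-k-1}(\partial M,\Z)$ for all $k$: indeed from $\widetilde H_j(M) \to H_j(M,\partial M) \to \widetilde H_{j-1}(\partial M) \to \widetilde H_{j-1}(M)$ the outer terms vanish since $M$ is contractible, so the middle map $\widetilde H_{j-1}(\partial M) \to H_j(M,\partial M)$ is an isomorphism; take $j = d-k$. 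Stringing the three isomorphisms together yields $H^k(G,\Z[G]) \cong \widetilde H_{d-1-k}(\partial M,\Z)$, as claimed.

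The only genuinely delicate point is making sure the duality isomorphisms are valid in the \emph{topological} (as opposed to smooth or PL) category, since $M$ is only a topological manifold-with-boundary; this is handled by citing the appropriate form of Poincaré–Lefschetz duality for topological manifolds (which holds in full generality — see e.g.\ Spanier or the treatment via sheaf-theoretic/Borel–Moore duality), and by using that boundaries of topological manifolds are collared so that $M$ deformation retracts onto $M \setminus \partial M$ and excision applies as usual. Everything else is a formal chase of long exact sequences, and no hypothesis beyond those stated — freeness, properness, cocompactness, contractibility, orientability — is needed.
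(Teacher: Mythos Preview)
Your argument is correct and is precisely the standard proof; the paper does not give an independent argument but simply cites \cite[\S 6.4]{be}, where exactly this chain of isomorphisms (compactly supported cohomology of the universal cover, Lefschetz duality, long exact sequence of the pair) is carried out. Your remarks on the topological category and collaring are appropriate caveats but not points of divergence.
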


\begin{proof}
    \cite[\S 6.4]{be}.
\end{proof}

\begin{cor}\label{criterion}
    Let $G$ be a (virtually) torsion-free group acting properly and cocompactly on a contractible orientable $d$-dimensional topological manifold-with-boundary $M$. Then $G$ is a (virtual) duality group if and only if $\partial M$ is homology equivalent to a wedge of spheres of equal dimension $q$. When this holds, the dualising module is $\widetilde H_q(\partial M,\Z)$, and the (virtual) cohomological dimension is $d-q-1$.
\end{cor}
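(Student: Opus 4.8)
The plan is to reduce at once to the case where $G$ is torsion-free and acts \emph{freely} on $M$, and then to read off the statement from \cref{action} and \cref{b-e criterion}. By hypothesis there is a torsion-free finite-index subgroup $G' \leq G$. Since duality groups are torsion-free, comparing $G'$ with an arbitrary finite-index duality subgroup of $G$ via \cref{finite index} shows that $G$ is a virtual duality group if and only if $G'$ is a duality group, and that the dualising module and cohomological dimension of $G'$ then depend only on $G$; so it suffices to prove the statement for $G'$, acting on the \emph{same} manifold $M$ with the same boundary $\partial M$. This action is free: properness makes the point stabilisers finite and torsion-freeness makes finite subgroups trivial.

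Having done this, I would note that since $G'$ now acts freely and cocompactly on the contractible manifold-with-boundary $M$, the quotient $M/G'$ is a compact manifold-with-boundary that is a $K(G',1)$; being homotopy equivalent to a finite CW complex, $G'$ is of type $\mathit{FP}$. The induced free, proper, cocompact action on $\partial M$ identifies $\partial M$, up to homotopy, with a covering space of the compact manifold $\partial M/G'$, hence with a space admitting a CW model of dimension at most $d-1$. Now \cref{action} gives natural isomorphisms $H^k(G',\Z[G']) \cong \widetilde H_{d-1-k}(\partial M,\Z)$ for all $k$, and \cref{b-e criterion} tells us that the type-$\mathit{FP}$ group $G'$ is a duality group of dimension $n$ precisely when $H^\ast(G',\Z[G'])$ is torsion-free and vanishes outside degree $n$. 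Transporting this condition across the isomorphisms, it becomes the statement that $\widetilde H_\ast(\partial M,\Z)$ is torsion-free and concentrated in a single degree $q := d-1-n$; using the finite-dimensional CW model of $\partial M$ one checks this is equivalent to $\partial M$ being homology equivalent to a wedge of $q$-spheres, with the convention $\widetilde H_{-1}(\varnothing)=\Z$ covering the case $\partial M = \varnothing$. When it holds, \cref{b-e criterion} identifies the dualising module of $G'$ (hence of $G$) with $H^n(G',\Z[G'])\cong\widetilde H_q(\partial M,\Z)$, and the cohomological dimension with $n = d - q - 1$.

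I do not anticipate a serious obstacle, as the corollary is essentially a repackaging of \cref{action} and \cref{b-e criterion}. The step requiring the most care is the very first one: one must verify that passing to a torsion-free finite-index subgroup changes neither the manifold $M$ nor the property of being a duality group, so that the \emph{freeness} hypothesis of \cref{action} is genuinely in force; this is where \cref{finite index} is essential. A secondary point to treat cleanly is the translation between ``$\widetilde H_\ast(\partial M)$ torsion-free and concentrated in one degree'' and ``$\partial M$ homology equivalent to a wedge of spheres'', which uses the finite-dimensionality of the model of $\partial M$.
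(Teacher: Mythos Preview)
Your proof is correct and takes the same route as the paper: pass to a torsion-free finite-index subgroup so the action becomes free, observe that $M/G'$ is a compact $K(G',1)$ (whence $G'$ is of type $\mathit{FP}$), and then read off the statement from \cref{action} and \cref{b-e criterion}; the paper's argument is a terse version of yours that simply defers to \cite[Theorem~6.2]{be}. One minor caveat: a finite-dimensional CW model of $\partial M$ does not by itself upgrade ``$\widetilde H_\ast$ torsion-free and concentrated in degree $q$'' to ``$\widetilde H_q$ free abelian'' when $q$ lies strictly below the top dimension (think of a Moore space $M(\Q,q)$), so your justification of the \emph{only if} direction is not quite airtight---though the paper is no more explicit on this point, and only the \emph{if} direction is actually used in the applications.
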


\begin{proof}This is \cite[Theorem 6.2]{be}. If $G$ is torsion-free then $G$ acts freely on $M$, and $G$ is of type $\mathit{FP}$ since it admits a classifying space $M/G$ of the homotopy type of a finite CW complex. The result is then immediate from \cref{b-e criterion} and \cref{action}. If $G$ is only virtually torsion-free then we merely replace $G$ by a finite-index torsion-free subgroup.
\end{proof}

\cref{criterion} generalises the fact that fundamental groups of closed aspherical manifolds are Poincar\'e duality groups with trivial dualising module (noting that the reduced homology of the empty space is $\Z$ in degree $-1$). The geometric criterion of \cref{criterion} leads to large classes of examples of (virtual) duality groups. In particular:
\begin{enumerate}
    \item Arithmetic groups are virtual duality groups. One chooses for $M$ the Borel--Serre bordification \cite{borelserre} of the associated symmetric space. The fact that the boundary has the homotopy type of a wedge of spheres is the Solomon--Tits theorem.
    \item The mapping class group of an orientable surface is a virtual duality group \cite{harer,ivanov}. One chooses for $M$ the \emph{thick part of Teichm\"uller space} (which will be discussed further shortly); one could alternatively use Harvey's bordification of Teichm\"uller space. The fact that the boundary has the homotopy type of a wedge of spheres is the heart of Harer's proof.
    \item Fundamental groups of knot complements are duality groups (see Example 1 in Section VIII.9 of \cite{brown2}: this also contains some helpful, more general, discussion). One chooses for $M$ the universal cover of the complement of a tubular neighborhood of a knot in $S^3$. The fact that knot groups are duality groups can also be seen more directly. 
\end{enumerate}

Our proof that the handlebody mapping class group is a virtual duality group will likewise be an application of the above criterion. 

\section{A combinatorial model of a classifying space for the handlebody group}

\subsection{Teichm\"uller space and thick Teichm\"uller space}

This material is covered in Chapter~10 of \cite{farbmargalit}. To expand on the notation given in the introduction, throughout $V$ will denote  a handlebody of genus $g$, with $b$ marked discs on its boundary, and $p$ marked points on its boundary, all of which are disjoint. We let $S$ denote the complement of the interiors of the marked discs and points in $\partial V$. Hence $S$ is an oriented surface of genus $g$ with $b$ boundary components, and $p$ punctures. A simple closed curve on $S$ is \emph{essential} if it does not bound a disc in $S$, and \emph{nonperipheral} if it does not bound a punctured disc or annulus in $S$.

We let $\mcg(S)$ denote the mapping class group of self-diffeomorphisms of $S$ fixing the punctures, and a collar neighborhood of the boundary, pointwise. We let $\hmod(V)$ denote the \emph{handlebody group}, i.e.\ the mapping class group of self-diffeomorphisms of $V$ fixing the marked boundary discs and marked points.  

Let $\T(S)$ be the \emph{Teichm\"uller space} of the surface $S$. This is the space of marked, complete, hyperbolic metrics on $S$, with totally geodesic boundary. It is diffeomorphic to an open ball of dimension $6g-6+3b+2p$. If $b=0$, then the group $\mcg(S)$ acts properly on $\T(S)$, and the quotient of $\T(S)$ by $\mcg(S)$ is Riemann's moduli space $\mathcal M_{g,p}$ of compact Riemann surfaces of genus $g$ with $p$ marked points. 

If $\mathbf b \in \mathbb R^b_{>0}$ is a vector, then we denote by $\T(S;\mathbf b)$ the subspace of $\T(S)$ where the hyperbolic lengths of the boundary components are exactly given by $\mathbf b$. 

The group $\mcg(S)$ does not act properly on $\T(S)$ when $b>0$, since a Dehn twist around a boundary component acts trivially. To remedy this, we let $\TT(S)$ denote $\T(S) \times \R^b$, where we have informally speaking introduced an additional Fenchel--Nielsen twist parameter along each boundary component. More intrinsically, if $\T(S)$ is the universal cover of the moduli space of hyperbolic surfaces of type $(g,b,p)$, then $\TT(S)$ is the universal cover of the moduli space which parametrises in addition the datum of a marked point on each boundary component. We define $\TT(S;\mathbf b)$ similarly.

Fix a positive real number $\eps$ with $\eps<\log(3+\sqrt 8)$. We write $\TT_\eps(S;\mathbf b)$ for the \emph{$\eps$-thick Teichm\"uller space}, by which we mean the locus inside $\TT(S;\mathbf b)$ parametrising hyperbolic metrics with the property that all essential, nonperipheral, closed geodesics on $S$ have length at least $\eps$. The space $\TT(S;\mathbf b)$ admits a $\mcg(S)$-equivariant deformation retraction down to $\TT_\eps(S;\mathbf b)$ \cite{jiwolpert}. The space $\TT_\eps(S;\mathbf b)$ is naturally a smooth manifold-with-corners of dimension $6g-6+3b+2p$, and $\mcg(S)$ acts on it properly and cocompactly.

\subsection{Handlebody Teichm\"uller space}
We now describe a classifying space for the handlebody group in terms of Teichm\"uller theory. The following construction was introduced by Hainaut and the first named author \cite{hainautpetersen}. For background on handlebody mapping class groups, see \cite{hensel}.

Fix a second real number $\eps'$ with $\eps<\eps'<\log(3+\sqrt 8)$, and an arbitrary length vector $\mathbf b \in \mathbb R^b_{>0}$. Let $\Y(V) \subset \TT_\eps(S;\mathbf b)$ denote the closed subspace of the $\eps$-thick Teichm\"uller space parametrising hyperbolic metrics on $S$ with the following property:
\emph{all essential, nonperipheral, closed geodesics on $S$ of length at most $\eps'$ bound discs in $V$, and the collection of these discs form a simple disc system in $V$. }

\begin{thm}[Hainaut--Petersen]\label{hp}
    The space $\Y(V)$ is a contractible smooth oriented manifold-with-corners of dimension $6g-6+3b+2p$, and the group $\hmod(V)$ acts properly and cocompactly on $\Y(V).$
\end{thm}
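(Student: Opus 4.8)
The plan is to deduce \cref{hp} from the main construction of \cite{hainautpetersen}, where the authors build a combinatorial/Teichm\"uller-theoretic classifying space for $\hmod(V)$; the statement here is essentially a repackaging of that construction adapted to the $\eps$-thick setting, so the proof is largely a matter of citing and assembling. First I would recall that $\TT_\eps(S;\mathbf b)$ is a smooth manifold-with-corners of dimension $6g-6+3b+2p$ on which $\mcg(S)$ acts properly and cocompactly, and that $\Y(V)$ is the closed subset cut out by the condition that the short geodesics (those of length $\leq\eps'$) bound discs forming a \emph{simple} disc system. The inequality $\eps'<\log(3+\sqrt 8)$ guarantees via the collar lemma that any two such short geodesics are disjoint, so ``the collection of these discs'' is an honest disjointly embedded disc system and the defining condition is a sensible one; the locus where it holds is closed because the short curves vary continuously and the conditions (bounding a disc, simplicity of the system) are closed conditions on the combinatorial type.

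Next I would address contractibility and the manifold-with-corners structure. The key input from \cite{hainautpetersen} is that $\Y(V)$ (or its natural analogue there) is a deformation retract of, or otherwise equivalent to, a contractible space on which $\hmod(V)$ acts with the stated properties; concretely, one stratifies $\Y(V)$ by which simple disc system is realized by the short curves, observes that each stratum is a smooth manifold-with-corners of the right dimension glued along faces according to the combinatorics of inclusions of disc systems, and checks that the local structure is that of a manifold-with-corners (no worse than corners, because the strata meet in the expected way — this is where the Teichm\"uller-theoretic transversality/normal-crossings input is used). Contractibility would follow either directly from the construction in \cite{hainautpetersen} (where an explicit contraction, e.g. via a flow or via a contraction of the nerve of the stratification, is given) or by exhibiting $\Y(V)$ as homotopy equivalent to a point through the poset of simple disc systems having a contractible realization. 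Orientability follows since $\Y(V)$ is an open subset of (the interior strata of) $\TT(S;\mathbf b)$, which is diffeomorphic to an open ball, hence orientable, and the corner strata do not obstruct this.

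For the group action: $\hmod(V)$ acts on $\TT_\eps(S;\mathbf b)$ through its inclusion $\hmod(V)\hookrightarrow\mcg(S)$, and this action preserves $\Y(V)$ because the defining condition — short geodesics bound a simple disc system \emph{in $V$} — is manifestly $\hmod(V)$-invariant (an element of $\hmod(V)$ sends discs in $V$ to discs in $V$ and preserves simplicity, since it cuts $V$ into genus-zero pieces iff the image system does). Properness of the action on $\Y(V)$ is inherited from properness of the $\mcg(S)$-action on $\TT_\eps(S;\mathbf b)$, since $\Y(V)$ is a closed $\hmod(V)$-invariant subset. Cocompactness of $\hmod(V)\backslash\Y(V)$ is the one point requiring genuine work beyond bookkeeping, and I expect it to be the main obstacle: it is established in \cite{hainautpetersen} via a thick-part / systole argument showing that, after the $\eps$-truncation, there are only finitely many $\hmod(V)$-orbits of simplices in the relevant arc/disc complex indexing the stratification, so that $\Y(V)$ is covered by finitely many orbits of compact strata; here the crucial subtlety is that a priori $\hmod(V)\backslash\Y(V)$ need not be compact (there can be ``degenerations'' where a non-simple system of meridians shrinks), and the point of replacing the full Hainaut--Petersen space by $\Y(V)$, together with the $\eps$-thick truncation, is precisely to cut off those non-compact directions. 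I would therefore quote the relevant cocompactness statement from \cite{hainautpetersen} (or reprove it by bounding the combinatorial complexity of short-curve systems realizable in $\Y(V)$) and assemble the pieces to conclude.
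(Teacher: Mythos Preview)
Your overall plan---assemble the statement from \cite{hainautpetersen}---is the paper's plan too, but your emphasis is inverted and you miss the one concrete observation the paper actually makes.

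The paper treats the manifold-with-corners structure and the proper cocompact $\hmod(V)$-action as routine. The former is a local check in Fenchel--Nielsen coordinates, once one knows (\cref{lemma:wolpert}) that on small open sets only finitely many length functions can drop below $\eps'$ and the corresponding curves are disjoint. Properness and cocompactness are not argued at all; they are taken as read from \cite[Section 2]{hainautpetersen}. Your extended discussion of cocompactness as ``the one point requiring genuine work'' is therefore misplaced: the degenerations you worry about (a non-simple meridian system shrinking) are exactly what the defining condition of $\Y(V)$ forbids, and after that the usual thick-part finiteness suffices.

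What the paper singles out as the substantive part is \emph{contractibility}, and here you are too vague. The paper's actual step is to identify $\Y(V)$ precisely as $\overline{\mathcal H}_{\eps'}\TT(V)\cap \TT_\eps(S;\mathbf b)$, where $\mathcal H_\eps\TT(V)$ is the open Hainaut--Petersen space (geodesics of length $<\eps$ form a simple disc system) and $\overline{\mathcal H}_{\eps'}\TT(V)$ its closed analogue at the \emph{other} threshold. Contractibility then follows from \cite[Theorem 1.4 and Proposition 3.5]{hainautpetersen}, together with the observation that those arguments go through unchanged when the two thresholds $\eps<\eps'$ are distinct. Your gestures toward ``a flow'' or ``the poset of simple disc systems being contractible'' do not supply this identification. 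That said, the second of your suggestions is in fact a valid alternative: the paper remarks (\cref{rem:alternative_contractibility_proof}) that contractibility of $\Y(V)$ can be recovered independently from the $\RGB^\op$-stratification together with contractibility of $|\SD(V)|$.
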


 \cref{hp} is not stated explicitly in \cite{hainautpetersen}, so let us comment on this. The fact that $\Y(V)$ is a manifold-with-corners is an easy local verification in Fenchel--Nielsen coordinates (taking coordinates defined by a pants decomposition containing all short geodesics), once one knows the following \cref{lemma:wolpert}. We refer to \cite[Section 2]{hainautpetersen} for more details.

\begin{lem}\label{lemma:wolpert}
    There is an open cover of $\,\T(S)$ such that on each open set, there are only a finite number of simple closed curves on $S$ for which the associated length function takes values $\leq \eps'$, and they are disjoint.  
\end{lem}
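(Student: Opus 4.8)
The plan is to prove Lemma~\ref{lemma:wolpert} by a standard collar-lemma argument in hyperbolic geometry, producing the cover explicitly from a pants decomposition together with Fenchel--Nielsen coordinates. First I would recall the \emph{collar lemma}: on a complete hyperbolic surface, every simple closed geodesic $\gamma$ of length $\ell(\gamma)$ has an embedded collar of half-width $w(\ell(\gamma))$, where $w(\ell) = \operatorname{arcsinh}(1/\sinh(\ell/2))$, and the collars of two disjoint simple closed geodesics are themselves disjoint. Since $w$ is decreasing, any simple closed geodesic of length $\leq \eps'$ has a collar of half-width at least $w(\eps')$; the bound $\eps' < \log(3+\sqrt 8)$ is precisely the condition ensuring $w(\eps') > w(\log(3+\sqrt8))$ is large enough that collars around short curves overlap unless the curves are disjoint — more precisely, it guarantees that the closed $\eps'$-short geodesics are pairwise disjoint and hence form part of a pants decomposition. (This is the same numerical input already used implicitly in the definition of $\TT_\eps$ and $\Y(V)$, so I would simply cite Buser or the Fenchel--Nielsen discussion in \cite{farbmargalit}.)

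Next I would build the open cover. Fix a point $\sigma_0 \in \T(S)$; choose a pants decomposition $P$ of $S$ adapted to $\sigma_0$ in the sense that $P$ contains every simple closed geodesic on $\sigma_0$ of length $\leq \eps'$. Using the Fenchel--Nielsen coordinates $(\ell_c, \tau_c)_{c \in P}$ associated with $P$, every point of $\T(S)$ is described by continuously varying length and twist parameters. Define $U_{\sigma_0}$ to be the open set where every curve $c \in P$ that is \emph{not} $\eps'$-short at $\sigma_0$ has $\ell_c > \eps'$, and where every curve $c \in P$ that \emph{is} $\eps'$-short at $\sigma_0$ has $\ell_c < \log(3+\sqrt8)$; these are open conditions, so $U_{\sigma_0}$ is open and contains $\sigma_0$. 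These sets cover $\T(S)$ as $\sigma_0$ ranges over $\T(S)$.

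I then need the two conclusions on each $U_{\sigma_0}$. Disjointness: if at some $\sigma \in U_{\sigma_0}$ a simple closed geodesic $\delta$ has $\ell_\sigma(\delta) \leq \eps'$, then by the collar lemma $\delta$ has a wide embedded collar; by the numerical choice of $\eps'$ this forces $\delta$ to be disjoint from every curve of $P$ that is short at $\sigma$, and from the boundary of every pair of pants in $P$ that might contain it a pair of pants with all three boundary lengths $< \log(3+\sqrt8)$ contains no essential simple closed geodesic other than (isotopes of) its boundary so $\delta$ must in fact be isotopic to one of the $\eps'$-short curves in $P$. Finiteness: this same argument shows that the only simple closed curves with length $\leq \eps'$ on any $\sigma \in U_{\sigma_0}$ are among the finitely many components of $P$ that are short at $\sigma_0$, so there are at most $|P| = 3g-3+b+p$ of them; and being components of the fixed pants decomposition they are automatically pairwise disjoint.

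The main obstacle is the geometric step: controlling which simple closed geodesics can become short on $U_{\sigma_0}$. One has to rule out that some curve transverse to $P$ (with large twisting) becomes short, and that some curve inside a pair of pants of $P$ becomes short; both are handled by the collar lemma together with the fact that a hyperbolic pair of pants with boundary lengths bounded above by $\log(3+\sqrt8)$ has an interior that is uniformly far (in the appropriate sense) from containing a short essential curve — so the constant $\log(3+\sqrt8)$ is doing real work and the inequalities must be tracked carefully. Everything else (openness of the defining conditions, continuity of Fenchel--Nielsen length functions, the covering property) is routine and can be quoted from \cite{farbmargalit} or \cite{hainautpetersen}.
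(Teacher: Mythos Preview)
Your open sets $U_{\sigma_0}$ are too large for the conclusion to hold. They constrain only the lengths of the curves in the chosen pants decomposition $P$, and impose no condition on twist parameters or on curves outside $P$. Concretely, take $\sigma_0$ at which \emph{no} curve has length $\leq \eps'$. Then your $U_{\sigma_0}$ is simply $\{\sigma : \ell_c(\sigma) > \eps' \text{ for all } c \in P\}$, an unbounded region of $\T(S)$. Inside it one may pinch any curve $\delta$ transverse to $P$ down to length $\leq \eps'$ while keeping every $\ell_c > \eps'$ (indeed, pinching $\delta$ forces each $c$ crossing it through a wide collar, so $\ell_c \to \infty$). At such a point there is a short curve $\delta \notin P$, contradicting your claimed conclusion that the short curves on $U_{\sigma_0}$ lie among the short-at-$\sigma_0$ curves of $P$.

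The collar argument you sketch shows only that a curve of length $\leq \eps'$ is disjoint from those $c \in P$ with $\ell_c < \log(3+\sqrt 8)$; it says nothing about $c \in P$ with larger length, and on your $U_{\sigma_0}$ the long-at-$\sigma_0$ curves of $P$ are bounded only from \emph{below}. (Your aside about pairs of pants containing no interior essential curves is true for any pair of pants regardless of boundary lengths, and does not address curves transverse to $P$.) What is missing is a uniform bound on \emph{all} length functions over a neighbourhood, not just on the finitely many $\ell_c$ with $c\in P$. This is exactly what Wolpert's Lemma supplies: on a small Teichm\"uller ball around $\sigma_0$ one has $K^{-1}\ell_{\sigma_0}(\gamma) \leq \ell_\sigma(\gamma) \leq K\,\ell_{\sigma_0}(\gamma)$ for every curve $\gamma$, with $K$ close to $1$. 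Choosing the ball so that $K\eps' < \log(3+\sqrt 8)$, any $\gamma$ with $\ell_\sigma(\gamma)\leq\eps'$ has $\ell_{\sigma_0}(\gamma) < \log(3+\sqrt 8)$, and the Collar Lemma then gives the required finiteness and disjointness. That is precisely the route taken in the paper.
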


\begin{proof}This follows from the Collar Lemma and Wolpert's Lemma. See the discussion following \cite[Theorem 2.3]{hainautpetersen}.
\end{proof}
What remains of \cref{hp} is contractibility of $\Y(V)$. Let $\mathcal{H}_\eps\TT(V)$ be the open subspace of $\TT(S)$ defined by the condition that all closed geodesics on $S$ of length strictly less than $\eps$ bound discs in $V$, and the collection of these discs form a simple disc system in $V$. Let $\overline{\mathcal{H}}_\eps\TT(V)$ be its closure, i.e.\ the locus defined by the condition that closed geodesics on $S$ of length at most $\eps$ form a simple disc system in $V$. The space $\mathcal{H}_\eps\TT(V)$ is contractible according to \cite[Theorem 1.4]{hainautpetersen}, and \cite[Proposition 3.5]{hainautpetersen} says that both maps
 \[ \mathcal{H}_\eps\TT(V) \hookrightarrow \overline{\mathcal{H}}_\eps\TT(V) \hookleftarrow (\overline{\mathcal{H}}_\eps\TT(V)  \cap \TT_\eps(S;\mathbf b))\]
 are homotopy equivalences. But now we may observe that $\Y(V) = \overline{\mathcal{H}}_{\eps'}\TT(V)  \cap \TT_\eps(S;\mathbf b)$, and none of the arguments need to be changed if the two parameters $\eps$ and $\eps'$ are chosen to be distinct. 

 A final remark is that the discussion in \cite{hainautpetersen} is consistently formulated in terms of the quotient orbifold $\mathcal{H}_\eps\TT(V)/\hmod(V)$, rather than its universal cover. An alternative proof of the contractibility of $\Y(V)$ follows from the $\RGB^\textnormal{op}$-stratification of $\Y(V)$ described below: see Remark~\ref{rem:alternative_contractibility_proof}.

\subsection{The RGB poset}

In this section, we shall construct a stratification of the space $\mathcal Y(V)$, whose strata are indexed by coloured simple disc systems in $V$. 

\begin{defn}[The posets $\RGB$ and $\dRGB$] Define the poset $\RGB=\RGB(V)$ as follows. An element of $\RGB(V)$ is a given by a simple disc system in $V$, all of whose discs are coloured red, green or blue. If $g(V)=0$, then the empty disc system is considered to be a simple disc system. We say $D \leq D'$, if $D'$ can be obtained from $D$ by a combination of adding new blue discs, turning some green discs blue, and turning some green discs red. We denote by $\dRGB(V) \subset \RGB(V)$ the upwards-closed subposet consisting of disc systems which either contain a red disc, or with the property that deleting all blue discs leaves a non-simple disc system. 
\end{defn}

\begin{defn}[The \emph{RGB disc system} given by $\sigma \in \mathcal{Y}(V)$] \label{def of RGB stratification}
For $\sigma \in \mathcal{Y}(V)$, its associated \emph{RGB disc system} $d(\sigma)$ has an underlying simple disc system consisting of discs whose associated geodesic meridians have length at most $\eps'$. Furthermore
\begin{itemize}
	\item discs associated to geodesics of length $\eps$ are red,
	\item discs associated to geodesics of lengths in $(\eps,\eps')$ are green,
	\item discs associated to geodesics of length $\eps'$ are blue. 
\end{itemize}
\end{defn}

\begin{defn}
Let $\Po$ be a poset. The \emph{Alexandrov topology} on  $\Po$ is the topology where a set is open if and only if it is upwards-closed in $\Po$. A \emph{$\Po$-stratification} of a topological space $X$ is a continuous map $f \colon X \to \Po$ with respect to the Alexandrov topology on $\Po$. Equivalently, $f \colon X \to \Po$ is a $\Po$-stratification if and only if $f^{-1}(\Po_{\geq p})$ is open for all $p \in \Po$. 
\end{defn}

\begin{defn}Let $\Po$ be a poset. We use $\Po^\op$ to denote the \emph{opposite poset}, where the ordering is reversed. We will often care about posets only via their geometric realization; we note that the geometric realizations of $\Po$ and $\Po^\op$ are isomorphic as simplicial complexes.
\end{defn}

\begin{thm}\label{thm:combinatorial model}
    The map $d \colon \Y(V) \to \RGB(V)^\textnormal{op}$ of \cref{def of RGB stratification} is a $\hmod(V)$-equivariant $\RGB^\textnormal{op}$-stratification of $\,\Y(V)$. The boundary $\partial \Y(V)$ is the union of strata given by $d^{-1}(\dRGB^\textnormal{op})$. 
\end{thm}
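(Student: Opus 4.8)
The plan is to verify the two assertions in turn: first that $d\colon\Y(V)\to\RGB(V)^{\op}$ is continuous for the Alexandrov topology (equivalently, that $d^{-1}(\RGB_{\geq p}^{\op})$ is open in $\Y(V)$ for every $p\in\RGB(V)^{\op}$, i.e.\ $d^{-1}(\RGB_{\leq p})$ is open), and second that the union of strata indexed by $\dRGB^{\op}$ is exactly the topological boundary $\partial\Y(V)$. Equivariance is essentially formal: the handlebody group acts on $\Y(V)\subset\TT_\eps(S;\mathbf b)$, it acts on disc systems and preserves geodesic lengths, and the colouring in \cref{def of RGB stratification} depends only on lengths, so $d(g\cdot\sigma)=g\cdot d(\sigma)$; I would dispose of this in one sentence.

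For continuity, fix $\sigma\in\Y(V)$ with $d(\sigma)=D$ and let $D'\in\RGB(V)$ with $D'\leq D$; I must show that a neighbourhood of $\sigma$ maps into $\RGB_{\leq D}$, and more precisely I want to understand $d$ on a neighbourhood. By \cref{lemma:wolpert}, near $\sigma$ there is an open set $U\subset\TT_\eps(S;\mathbf b)$ on which only finitely many simple closed curves have length $\leq\eps'$, and these are disjoint; shrinking $U$, I may assume the curves of length $\leq\eps'$ at any point of $U$ form a subset of those at $\sigma$ together with curves that are \emph{close in length} to $\eps'$. The key local picture is then in Fenchel--Nielsen coordinates adapted to a pants decomposition containing all the curves of length $\leq\eps'$ at $\sigma$: the length functions $\ell_{\gamma}$ of these finitely many curves are smooth, and the stratum $d^{-1}(D')$ for $D'$ in a neighbourhood of $D$ is cut out by equalities and inequalities among the values $\ell_\gamma\in\{=\eps\}\cup(\eps,\eps')\cup\{=\eps'\}\cup(\eps',\infty)$: a red disc means $\ell_\gamma=\eps$, green means $\eps<\ell_\gamma<\eps'$, blue means $\ell_\gamma=\eps'$, and a disc of $D$ not in $D'$ corresponds to $\ell_\gamma>\eps'$. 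I would check that the three elementary moves generating $D'\leq D$ (add a blue disc; green$\to$blue; green$\to$red) correspond precisely to passing from an open/closed condition to its closure in the appropriate coordinate direction — adding a blue disc moves from $\ell_\gamma>\eps'$ to $\ell_\gamma=\eps'$, green$\to$blue from $\ell_\gamma<\eps'$ to $\ell_\gamma=\eps'$, green$\to$red from $\ell_\gamma>\eps$ to $\ell_\gamma=\eps$ — so that $d^{-1}(\RGB_{\leq D})=\{\ell_\gamma\geq\eps'\text{ or }\ell_\gamma\leq\eps'\text{ appropriately}\}$ is an open neighbourhood of the closed stratum $d^{-1}(D)$ inside $U$. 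Running this over all $p$ and patching via \cref{lemma:wolpert} gives the stratification.

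The boundary statement then falls out of the same local coordinate analysis. The manifold-with-corners structure on $\Y(V)$ is defined exactly so that the boundary-defining functions near $\sigma$ are $\ell_\gamma-\eps$ for the red-eligible curves (bounded below by $\eps$ by $\eps$-thickness) together with $\eps'-\ell_\gamma$ for curves that must stay $\leq\eps'$ to keep the short-curve system simple, i.e.\ for curves whose removal from the length-$\leq\eps'$ system would leave a simple system — equivalently, a point $\sigma\in\Y(V)$ is interior iff one can decrease all length-$\eps$ curves and perturb freely, which is possible exactly when $d(\sigma)$ has no red disc and deleting the blue discs still leaves a simple system. The complementary condition — a red disc is present, or deleting the blue discs leaves a non-simple system — is precisely the definition of $\dRGB(V)$, so $\partial\Y(V)=d^{-1}(\dRGB^{\op})$. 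I expect the main obstacle to be the bookkeeping in the non-simple case: one must argue carefully that if the length-$<\eps'$ curves at $\sigma$ (the non-blue discs) already fail to be simple, then \emph{every} nearby point of $\TT_\eps(S;\mathbf b)$ that still lies in $\Y(V)$ must retain a curve of length exactly $\eps'$ (a blue disc making the total system simple again), so $\sigma$ cannot be interior — this uses that simplicity is an open-and-closed type condition on disc systems and that the set of curves of length $\leq\eps'$ can only shrink under small length increases, together with \cref{lemma:wolpert} to control which new short curves can appear. This is where I would spend the most care; the rest is a direct translation between the combinatorics of the moves defining $\leq$ in $\RGB(V)$ and the geometry of Fenchel--Nielsen length functions near $\sigma$.
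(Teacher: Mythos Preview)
Your proposal is correct and follows essentially the same route as the paper: continuity via the length functions (the paper packages this a bit more cleanly, observing that the RGB-stratification is the pullback along the product of all length functions $\Y(V)\to\R_{\geq\eps}^\infty$ of the product of the evident four-piece stratifications of $[\eps,\infty)$ by $\{\eps\}$, $(\eps,\eps')$, $\{\eps'\}$, $(\eps',\infty)$), equivariance by naturality, and the boundary identification by checking that a red disc forces $\sigma\in\partial\TT_\eps$, that if the non-blue discs are non-simple then increasing the blue lengths just past $\eps'$ produces arbitrarily close points outside $\Y(V)$, and that otherwise a Fenchel--Nielsen neighbourhood stays inside. Your worry about the ``bookkeeping in the non-simple case'' dissolves once you phrase it this way: you need not analyse nearby points \emph{in} $\Y(V)$, only exhibit nearby points in Teichm\"uller space \emph{outside} it.
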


\begin{proof}Continuity of $d$ follows from the fact that the length function associated to every curve on $S$ is a continuous function $\T_\eps(S)\to \R_{\geq \eps}$. At the risk of being overly formal, we may consider the stratification of $\R_{\geq \eps}$ with two closed strata $\{\eps\}$ and $\{\eps'\}$, and two open strata $(\eps,\eps')$ and $(\eps',\infty)$. Then the RGB-stratification of $\Y(V)$ is the pull-back of the product of these stratifications along the map $\Y(V) \to \R_{\geq \eps}^\infty$ given by the product of all length functions. The stratification is $\hmod(V)$-equivariant, since the map $\Y(V) \to \R_{\geq \eps}^\infty$ is equivariant.

Now take $\sigma \in \Y(V)$ with $d(\sigma)=D\in \dRGB^\op$. If $D$ contains a red curve, then $\sigma$ lies in the boundary of $\eps$-thick Teichm\"uller space, and in particular also in the boundary of $\Y(V)$. If deleting the blue discs from $D$ leaves a non-simple disc system, then one can find an arbitrarily small perturbation $\sigma'$ of $\sigma$ within Teichm\"uller space such that all curves on $S$ associated with the blue discs have length function strictly larger than $\eps'$ on $\sigma'$, and in particular it follows that $\sigma'$ lies on the boundary of $\Y(V)$. Conversely, one verifies that if $d(\sigma) \notin \dRGB^\op$ then an open neighbourhood of $\sigma$ in Teichm\"uller space lies in $\Y(V)$, and $\sigma$ must be in the interior of $\Y(V)$.
\end{proof}

\begin{defn}\label{ht of element}
    The \emph{height} $\Ht(p)$ of an element $p$ in a poset $\Po$ is the size of the longest chain in $\Po_{<p}$. 
\end{defn} 
\begin{prop}\label{prop:strat}Let $\Po$ be a poset such that every element $p \in \Po$ has finite height. Suppose $f \colon X \to \Po$ is a $\Po$-stratification of a topological space $X$. Assume the following conditions:
\begin{enumerate}
    \item The stratification of $X$ is locally finite, in the sense that $X$ admits a basis of open sets each of which meets only finitely many closures of strata.
    \item $f^{-1}(\Po_{\leq p})$ is contractible for all $p \in \Po$.
    \item Each inclusion
    $$ f^{-1}(\Po_{<p}) \hookrightarrow f^{-1}(\Po_{\leq p})$$
    is a cofibration. 
  
\end{enumerate}Then $X$ is weakly homotopy equivalent to the geometric realization $|\Po|$.
\end{prop}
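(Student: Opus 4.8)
The plan is to show that the stratification gives rise to a homotopy colimit description of $X$ over the poset $\Po^\op$, and then identify that homotopy colimit with $|\Po|$. Concretely, for each $p \in \Po$ write $X_{\leq p} = f^{-1}(\Po_{\leq p})$ and $X_{<p} = f^{-1}(\Po_{<p})$. The strictly increasing function $\Po \to \mathbb N$, together with local finiteness (1), lets us exhaust $X$ by the open subspaces $f^{-1}(\Po_{\geq n})^c$ (complements of the closed sub-levels), or more precisely lets us build $X$ by attaching the closed pieces $X_{\leq p}$ one rank at a time; since $\Po \to \mathbb N$ is strictly increasing, only finitely many $p$ lie below any given rank, so these attachments are well-behaved. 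By (2) each attachment $X_{<p} \hookrightarrow X_{\leq p}$ is a cofibration, so $X$ is the (homotopy) colimit of the diagram sending $p \mapsto X_{\leq p}$ and using the cofibrant inclusions; equivalently, $X \simeq \operatorname{hocolim}_{p \in \Po^\op} X_{\leq p}$.

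The next step is to run a Mayer--Vietoris / nerve-type spectral sequence (or, cleaner, invoke the projection lemma for homotopy colimits over posets): since by (3) every $X_{\leq p}$ is contractible, the diagram $p \mapsto X_{\leq p}$ is objectwise weakly equivalent to the constant diagram at a point. Homotopy colimits preserve objectwise weak equivalences between cofibrant-enough diagrams, so $\operatorname{hocolim}_{p \in \Po^\op} X_{\leq p} \simeq \operatorname{hocolim}_{p \in \Po^\op} \ast = |\Po^\op| = |\Po|$. The last identification uses that the homotopy colimit of the constant point diagram over a small category is its classifying space, and that a poset and its opposite have homeomorphic geometric realizations. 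Combining the two steps gives $X \simeq |\Po|$, which is the claim.

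I would organize the write-up as follows. First, a lemma: under (1) and the strictly-increasing rank function, $X = \operatorname{colim}_n X_{\leq \operatorname{rank} \leq n}$ and each stage is built from the previous by pushing out along the coproduct of the cofibrations $X_{<p} \hookrightarrow X_{\leq p}$ over the $p$ of that rank (here one checks that $X_{\leq p} \cap X_{\leq q} = X_{\leq p \wedge q}$-type intersections behave correctly, which is exactly what the stratification axiom $f^{-1}(\Po_{\geq p})$ open buys us, together with local finiteness to ensure the topology on the union is the colimit topology). This realizes $X$ as a CW-like colimit, hence identifies it with the homotopy colimit of $p \mapsto X_{\leq p}$ over $\Po^\op$. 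Second, apply the two-line argument of the previous paragraph. Third, note the degenerate/technical points: one should phrase ``homotopy colimit'' so that the cofibration hypothesis (2) makes the strict colimit compute the homotopy colimit (e.g. via the skeletal filtration being a sequence of cofibrations), and one should remark that ``weak homotopy equivalence'' is all that is claimed, so no CW hypotheses on $X$ are needed.

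The main obstacle I expect is the bookkeeping in the first step: making precise that the closed strata $X_{\leq p}$, glued along the $X_{<p}$, reconstruct $X$ with its original topology, and that this gluing is a genuine (homotopy) colimit over $\Po^\op$ rather than merely a levelwise statement. This is where local finiteness (1) is essential --- without it the colimit topology on $\bigcup_p X_{\leq p}$ need not agree with that of $X$, and the rank function is what lets one induct. Once the colimit presentation is nailed down, the identification with $|\Po|$ is formal (it is the standard fact that $\operatorname{hocolim}$ of a diagram of contractible spaces over $\mathcal C$ is $B\mathcal C$), so I would spend the bulk of the proof on the gluing lemma and treat the rest briskly, citing a standard reference (e.g. the discussion of homotopy colimits over posets in the style of Quillen's Theorem A, or a textbook treatment) for the projection lemma.
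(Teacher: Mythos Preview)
Your proposal is correct and follows essentially the same strategy as the paper: both identify $X$ as the colimit of the diagram $p \mapsto f^{-1}(\Po_{\leq p})$, argue that this colimit is a homotopy colimit using hypothesis (2) and the rank function, and then invoke contractibility (3) to reduce to the constant point diagram whose homotopy colimit is $|\Po|$. The only difference is packaging: the paper phrases the cofibrancy step as ``the diagram is Reedy cofibrant with respect to the degree function $\Po \to \mathbb N$'' (so that latching maps are cofibrations), whereas you spell out the equivalent rank-by-rank attachment by hand.
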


\begin{proof}
Consider the diagram (i.e.~functor) $\Po \to \mathbf{Top}$ given by $p \mapsto f^{-1}(\Po_{\leq p})$. Condition (1) implies that the colimit of this diagram is $X$. Indeed, this means precisely that $X$ carries the colimit topology from the union of subspaces $f^{-1}(\Po_{\leq p})$, which is the case for any locally finite closed covering of a topological space. Condition (2) tells us that every subspace $f^{-1}(P_{\leq p})$ is contractible, so that the homotopy colimit of this diagram is $|\Po|$. Therefore it is enough to show that the colimit and homotopy colimit are weakly homotopy equivalent in this case.

Since every element of $\Po$ has finite height, the height function $\Ht \colon \Po \to \mathbb{N}$ endows $\Po$ with the structure of a \emph{direct category} in the sense of \cite[\S5.1]{hovey}. This implies that the projective model structure on $\mathrm{Fun}(\Po,\mathbf{Top})$ exists, and has the property that $\Phi :\Po \to \mathbf{Top}$ is cofibrant if and only if, for each $p \in \Po$, the canonical map
\[ \operatornamewithlimits{colim}_{q<p} \Phi(q) \to \Phi(p)\]
is a cofibration \cite[Theorem 5.1.3]{hovey}. In particular, our diagram is cofibrant, since this condition precisely agrees with (3). It follows that its colimit computes the homotopy colimit, since the colimit is a left Quillen functor for this model structure.\end{proof}

\begin{thm}\label{thm:combinatorial model 2} The stratification of $\,\Y(V)$  by $\RGB(V)^\op$ described in \cref{thm:combinatorial model} satisfies the conditions of \cref{prop:strat}. In particular, $|\RGB(V)| \simeq \Y(V)$. Similarly, the stratification of $\,\partial \Y(V)$ by $\partial\RGB(V)^\op$ satisfies the same conditions, and $|\partial \RGB(V)| \simeq \partial\Y(V)$.

    \end{thm}

\begin{proof}The proof is the same in both cases.
Local finiteness of the stratification is clear from \cref{lemma:wolpert}. Let us describe the closures of strata. Take $D \in \RGB(V)$, and suppose that $D$ cuts $V$ into genus zero handlebodies $V_1,\dots,V_s$, each with some marked discs and points on its boundary. If $D$ consists of $k$ discs, then the group $\R^k$ acts on the cartesian product 
 $\prod_{i=1}^s \TT(\partial V_i)$ by shifting the twist parameters along each disc in $D$. More precisely, each disc in $D$ corresponds to two distinct marked boundary discs among the handlebodies $V_1,\dots,V_s$, and $x \in \R$ acts by shifting the twist parameter on one of these discs by a factor $x$, and the other disc by a factor $-x$. The closure of the stratum corresponding to $D$ is then naturally identified with the subspace of 
$$\prod_{i=1}^s \TT_{\eps'}(\partial V_i)/\R^k$$
defined by imposing the evident conditions on the lengths of boundary components:
\begin{enumerate}
    \item Boundary components corresponding to red (resp.\ blue) discs have length $\eps$ (resp.~$\eps'$).
    \item Boundary components of the original handlebody $V$ have lengths as prescribed by the fixed length vector $\mathbf b$ appearing in the definition of $\Y(V)$.
    \item Boundary components corresponding to green discs have length in the interval $[\eps,\eps']$, and the two boundary components corresponding to each green disc have the same length.
\end{enumerate}
From this description it is not hard to see (using Fenchel--Nielsen coordinates) that the closure of the stratum is a smooth manifold-with-corners, that the stratum itself is precisely the interior of the manifold, and that the interior is diffeomorphic to a ball. In particular the closure is contractible, and the inclusion of the boundary is a cofibration.
\end{proof}

\section{The RGB poset and the non-simple disc complex} \label{s:poset}

Let $\NS(V)$ denote the poset of nonempty, non-simple disc systems in $V$. The realization $|\NS(V)|$ is naturally identified with the barycentric subdivision of the simplicial complex $\compNS(V)$ defined in the introduction. The goal of this section is to prove the following. 
\begin{thm}\label{thm:simplifying}Let $V$ be a handlebody of genus $g>0$. Then 
    $|\dRGB(V)| \simeq \Sigma |\NS(V)| \cong \Sigma \compNS(V)$. 
\end{thm}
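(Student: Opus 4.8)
The plan is to decompose the poset $\dRGB(V)$ according to the two defining conditions — containing a red disc, versus becoming non-simple after deleting blue discs — and to recognize the resulting pieces as contractible subposets whose intersection is $\NS(V)$, so that the realization of $\dRGB(V)$ is a (homotopy) pushout exhibiting a suspension. Concretely, let $A \subset \dRGB(V)$ be the subposet of RGB systems containing at least one red disc, and let $B \subset \dRGB(V)$ be the subposet of those systems for which deleting all blue discs leaves a non-simple (possibly empty) disc system. By definition $\dRGB = A \cup B$ as posets, and $A \cap B$ is the subposet of systems that both contain a red disc and remain non-simple after deleting blue discs. Since geometric realization of posets turns unions of downward- or upward-closed subposets into homotopy pushouts along the realization of the intersection (one must check the relevant subposets are suitably "cofibrant", which for posets amounts to elementary combinatorics of the cover), I would obtain $|\dRGB| \simeq |A| \cup_{|A \cap B|} |B|$.

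Next I would show $|A|$ and $|B|$ are each contractible. For $|A|$: the assignment $D \mapsto D'$, where $D'$ is obtained from $D$ by turning every green and every blue disc into... — rather, the natural move is a Quillen-style argument. Fix the operation "recolour all non-red discs to red and forget blue discs", or more robustly, observe that $A$ has the subposet $A_0$ of all-red simple systems as a deformation retract-type core, and use that adding a red disc or changing colours gives a conical contraction: there should be a zig-zag of natural transformations between the identity functor on $A$ and a constant functor, passing through "make all discs red". Since a natural transformation of poset maps induces a homotopy on realizations (Quillen), this gives $|A| \simeq |A_0| = |\SD^{\neq\emptyset}_{\mathrm{red}}|$, the realization of the poset of nonempty simple disc systems, which is contractible because it has... — here one invokes that the poset of nonempty simple disc systems is contractible, which for $g>0$ follows from it being a cone, or from known connectivity of the disc complex $\compD$ minus its simple part; in any case this is the main technical input and I expect it is handled by an earlier or standard result on disc complexes (Hatcher, McCullough). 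For $|B|$: symmetrically, the operation "delete all red discs and recolour everything blue" lands in the subposet $B_0$ of systems where every disc is blue and the underlying system is non-simple; a natural transformation argument (every element $\leq$ its blue-ification, which is still in $B$ since deleting blue discs from the blue-ification gives the empty system only if... — one must ensure non-simplicity is preserved, which it is, as deleting blue discs from an all-blue system gives $\emptyset$, and $\emptyset$ is non-simple precisely when $g>0$) shows $|B| \simeq |B_0|$. But $B_0$ is isomorphic to $\NS(V)$ with everything painted blue, hence $|B_0| \cong |\NS(V)|$, and this is \emph{not} contractible in general — so I would instead be more careful: $|B|$ is contractible because $B$ has a minimum-type element after the right reduction, namely every element of $B$ is $\leq$ the all-blue-ification, and the all-blue-ifications themselves form $\NS$, but $B$ deformation retracts onto a cone over $\NS$ by the move "add blue discs until simple then... ".

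The cleanest route, which I would ultimately take, is: show directly that $\dRGB$ is covered by the downward-closed (or upward-closed) subposets $A$ and $B$ with $|A|, |B|$ contractible and $|A \cap B| \simeq |\NS|$, and then $|\dRGB| \simeq \Sigma|\NS|$ is the standard identification of a homotopy pushout of two points over $X$ with $\Sigma X$. Thus the three things to nail down are: (a) $|A|$ contractible — via "recolour to all-red", reducing to contractibility of the (nonempty) simple disc system poset, which should be citable; (b) $|A \cap B| \simeq |\NS|$ — the forgetful map "delete all non-red discs... no: delete all blue discs and recolour the rest" from $A \cap B$ to $\NS$, together with its section "paint everything red", should be an inverse pair up to natural transformation, giving the homotopy equivalence, and the final isomorphism $|\NS(V)| \cong \compNS(V)$ is just barycentric subdivision as noted before the theorem; (c) $|B|$ contractible — the move "recolour all discs blue and then we are allowed to add arbitrary blue discs" exhibits a conical contraction of $B$, since after painting blue one can increase toward any total blue system and blue additions never leave $B$. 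The main obstacle is step (a): verifying that the poset of nonempty simple disc systems in a positive-genus handlebody has contractible realization. I expect this follows from the contractibility of the disc complex (Hatcher's work on the disc complex of a handlebody / McCullough) combined with a "link" or "bad simplex" argument à la Quillen removing the simple-vs-non-simple distinction, but pinning down the precise statement and reference — or giving a short self-contained Quillen-fiber argument — is where the real work lies; the colour-manipulation homotopies in (b) and (c), by contrast, are formal consequences of the poset structure and Quillen's observation that order-preserving maps related by an inequality are homotopic.
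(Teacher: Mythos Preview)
Your decomposition $\dRGB = A \cup B$ is exactly the one the paper uses (there called $\mathsf Q_2$ and $\mathsf Q_1$), and the suspension-via-pushout conclusion is the right endgame. Your argument for (c), retracting $B$ onto the all-blue subposet, is also essentially the paper's argument: turning reds green (decreasing) and then greens blue (increasing) lands in the all-blue subposet, and crucially the all-blue subposet really \emph{is} a copy of $\SD(V)$, since in $\RGB$ two all-blue systems are comparable precisely when one is contained in the other. Contractibility of $\SD(V)$ is then the input (Giansiracusa, via McCullough).

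The genuine gaps are in (a) and (b), and they stem from the same asymmetry: blue discs can be freely added or removed in the partial order on $\RGB$, but red discs cannot. For (a), your ``recolour everything red'' move does give a retraction of $A$ onto the all-red subposet $A_0$, but $A_0$ is an \emph{antichain} in $\RGB$ --- two all-red systems are comparable only if equal --- so $|A_0|$ is a discrete set, not $|\SD(V)|$. For (b), your proposed section ``paint $D \in \NS$ all red'' does not land in $\RGB$ at all: elements of $\RGB$ have \emph{simple} underlying disc systems, whereas $D$ is non-simple. The paper repairs both by systematically using the colour-forgetting map $f\colon \RGB \to \SD(V)$ together with Quillen's fibre lemma. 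For $A$, the fibre $f^{-1}(\SD_{\geq D}) \cap A$ is retracted (by a zig-zag of monotone recolourings) to the single point $\{D_{\mathrm{red}}\}$. For $A \cap B$, one maps to $\NS^{\mathrm{op}}$ by ``take the red and green discs'', and the fibre over $D$ retracts onto the poset of all-red copies of $D$ extended by all-blue simple systems in $V \setminus D$, i.e.\ onto $\SD(V \setminus D)$, which is contractible by induction. So the missing idea is that direct colour manipulations on the whole poset are not enough on the red side; one must work fibrewise over $\SD(V)$ (respectively $\NS(V)$).
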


 It will be convenient to freely use topological terminology regarding posets in this section. In particular, when we say e.g.\ that a poset is contractible, or that a map of posets is a homotopy equivalence, we mean that this holds for the geometric realizations.

\subsection{Poset lemmas}
Let us for the reader's convenience recall the tools of combinatorial topology of posets which we shall use. All results below are standard.

\begin{lem}[Quillen's fibre lemma] \label{lemma:fibre_lemma} Let $f:\mathsf P \to \mathsf Q$ be a map of posets. Suppose that either $f^{-1}(\mathsf Q_{\geq x})$ is contractible for all $x \in \mathsf Q$, or $f^{-1}(\mathsf Q_{\leq x})$ is contractible for all $x \in \mathsf Q$. Then $f \colon \mathsf P\to\mathsf Q $ is a homotopy equivalence. \end{lem}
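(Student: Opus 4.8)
The statement is the poset version of Quillen's Theorem A; in fact it is \cite[Proposition~1.6]{quillen-homotopyproperties}, and the plan is to recall the standard argument rather than reprove Theorem A from scratch. First I would reduce the two cases to one. Geometric realization is insensitive to reversing the order, since the order complex of $\mathsf P$ coincides with that of $\mathsf P^\op$; moreover $(f^\op)^{-1}\big((\mathsf Q^\op)_{\leq x}\big)$ is, as a poset up to reversing the order, exactly $f^{-1}(\mathsf Q_{\geq x})$, and the map $|f^\op|$ agrees with $|f|$ under the canonical homeomorphisms $|\mathsf P^\op|\cong|\mathsf P|$, $|\mathsf Q^\op|\cong|\mathsf Q|$. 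Hence, replacing $f$ by $f^\op\colon\mathsf P^\op\to\mathsf Q^\op$ if necessary, I may assume that $f^{-1}(\mathsf Q_{\leq x})$ is contractible for every $x\in\mathsf Q$.

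Next I would identify the fibres with comma categories. For a map of posets and $x\in\mathsf Q$, the over-category $f\downarrow x$ has as objects the $p\in\mathsf P$ with $f(p)\leq x$ and as morphisms the relations of $\mathsf P$; every coherence triangle commutes automatically because $\mathsf Q$ is a poset, so $f\downarrow x$ is literally the subposet $f^{-1}(\mathsf Q_{\leq x})$. Thus the hypothesis says precisely that all fibres $f\downarrow x$ of $f$, in the sense of Theorem~A, have contractible nerve, and Theorem~A then yields that $|f|\colon|\mathsf P|\to|\mathsf Q|$ is a homotopy equivalence.

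If one prefers a self-contained route in the spirit of the homotopy-colimit arguments used elsewhere in the paper (e.g.\ in the proof of \cref{prop:strat}), one can instead invoke the bar-construction decomposition $|\mathsf P|\simeq\operatorname{hocolim}_{x\in\mathsf Q}|f\downarrow x|$, natural over $|\mathsf Q|$: when every $|f\downarrow x|$ is contractible the right-hand side collapses to $\operatorname{hocolim}_{x\in\mathsf Q}\ast=|\mathsf Q|$ compatibly with $|f|$. In either route the only nontrivial ingredient is Theorem~A itself (equivalently, this hocolim decomposition), which is entirely standard; the rest — identifying the fibres with comma categories and the passage to opposite posets — is routine bookkeeping, so I expect no real obstacle beyond correctly citing the black box.
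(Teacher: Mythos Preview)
Your proposal is correct and aligns with the paper's approach: the paper's own proof is simply the one-line citation ``See \cite[Proposition 1.6]{quillen-homotopyproperties},'' and you invoke exactly this reference while additionally unpacking the reduction to $f^\op$ and the identification of $f^{-1}(\mathsf Q_{\leq x})$ with the comma category $f\downarrow x$. There is nothing to correct; if anything, you have supplied more detail than the paper does.
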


\begin{proof}
    See \cite[Proposition 1.6]{quillen-homotopyproperties}.
\end{proof}

The following notion does not appear to have a standard name, even though \cref{monotone} is well-known. 

\begin{defn}
    Let $\Po$ be a poset. A map of posets $r:\Po \to \Po$ is said to be \emph{inflating} if $x \leq r(x)$ for all $x \in \Po$, and \emph{deflating} if $x \geq r(x)$ for all $x \in \Po.$
\end{defn}

\begin{lem}\label{monotone}Let $\mathsf P$ be a poset, and let $r\colon\mathsf P\to \mathsf P$ be inflating or deflating. Then $r\colon\mathsf P\to \mathrm{Im}(r)$ is a homotopy equivalence.
\end{lem}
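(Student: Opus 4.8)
\textbf{Proof proposal for Lemma~\ref{monotone}.}

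The plan is to reduce to the classical homotopy-invariance statement that order-preserving maps of posets which are pointwise comparable induce homotopic maps on geometric realizations. Concretely, assume first that $f$ is monotone \emph{increasing}, meaning $f(p)\geq p$ for all $p\in\mathsf P$ (the decreasing case, $f(p)\leq p$, is entirely symmetric, using $\mathsf P^{\op}$, or can be handled by the same argument with inequalities reversed). Write $g\colon\mathsf P\to\mathsf P$ for $f$ viewed as landing in $\mathsf P$, and write $\iota\colon\mathrm{Im}(f)\hookrightarrow\mathsf P$ for the inclusion of the image as a full subposet. I want to show $f\colon\mathsf P\to\mathrm{Im}(f)$ and $\iota$ are mutually inverse homotopy equivalences.

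The key observation is that $\iota\circ f = g\colon\mathsf P\to\mathsf P$ satisfies $g(p)\geq p = \mathrm{id}_{\mathsf P}(p)$ for every $p$, so $g$ and $\mathrm{id}_{\mathsf P}$ are comparable order-preserving maps; hence $|g|\simeq|\mathrm{id}_{\mathsf P}|=\mathrm{id}_{|\mathsf P|}$. Similarly, restricting $f$ to $\mathrm{Im}(f)$, the composite $f\circ\iota\colon\mathrm{Im}(f)\to\mathrm{Im}(f)$ satisfies $(f\circ\iota)(q)\geq q$ for all $q\in\mathrm{Im}(f)$ (since $f$ is increasing and $\mathrm{Im}(f)$ is a full subposet), so again $|f\circ\iota|\simeq\mathrm{id}_{|\mathrm{Im}(f)|}$. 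Therefore $|f|\colon|\mathsf P|\to|\mathrm{Im}(f)|$ and $|\iota|$ are inverse homotopy equivalences, and in particular $f\colon\mathsf P\to\mathrm{Im}(f)$ is a homotopy equivalence.

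It remains to justify the fact used above: if $\alpha,\beta\colon\mathsf P\to\mathsf Q$ are order-preserving maps with $\alpha(p)\leq\beta(p)$ for all $p$, then $|\alpha|\simeq|\beta|$. This is standard — one produces an order-preserving map $H\colon\mathsf P\times[1]\to\mathsf Q$ (where $[1]=\{0<1\}$) by $H(p,0)=\alpha(p)$, $H(p,1)=\beta(p)$, which is monotone precisely because $\alpha(p)\leq\beta(p)$ and both $\alpha,\beta$ are monotone; since $|\mathsf P\times[1]|\cong|\mathsf P|\times[0,1]$, realizing $H$ gives the desired homotopy. Alternatively one can quote Quillen's fibre lemma directly: for $g\colon\mathsf P\to\mathsf P$ with $g(p)\geq p$, the fibre $g^{-1}(\mathsf P_{\leq x})$ contains $x$ as a maximal element whenever $x\in\mathrm{Im}(f)$, hence is contractible, which is essentially the same input. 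The only thing to be slightly careful about is that $\mathrm{Im}(f)$ is taken as a \emph{full} subposet of $\mathsf P$, so that the inequalities defining monotonicity transfer verbatim between $\mathsf P$ and $\mathrm{Im}(f)$; there is no real obstacle here, it is just bookkeeping. This is the kind of lemma where the statement is harder to locate cleanly in the literature than to prove, so I would simply include the two-line argument above, or cite \cite[Section 1]{quillen-homotopyproperties} for the comparability principle.
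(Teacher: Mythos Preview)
Your argument is correct and is exactly the paper's approach: the paper's proof is the single line ``use that if $f(x)\leq g(x)$ for all $x$, then $f$ and $g$ are homotopic,'' which is precisely the comparability principle you spell out and then apply to $\iota\circ f$ and $f\circ\iota$. (One small slip in your throwaway alternative via Quillen's fibre lemma: for increasing $f$ and $x\in\mathrm{Im}(f)$ the fibre $g^{-1}(\mathsf P_{\leq x})$ need not contain $x$ --- that would require $f(x)=x$, not merely $x\in\mathrm{Im}(f)$ --- so that shortcut does not go through as stated; but this is irrelevant to your main proof.)
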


\begin{proof}Let $i:\mathrm{Im}(r) \to \Po$ be the inclusion. We need to show that $i \circ r$ and $r \circ i$ are both homotopic to the identity. 
    Now use that if $f(x) \leq g(x)$ for all $x$, then $f$ and $g$ are homotopic \cite[\S1.3]{quillen-homotopyproperties}.
\end{proof}

\begin{notation}\label{notation:quillenjoin}
    If $\mathsf P$ and $\mathsf Q$ are posets, then $\mathsf P \ast \mathsf Q$ denotes the poset with underlying set $\mathsf P \sqcup\mathsf Q$, where the partial orders on the subsets $\mathsf P$ and $\mathsf Q$ are the same as in the original posets, and $p < q$ for all $p \in \mathsf P$ and $q \in \mathsf Q$. There is a homeomorphism  $|\mathsf P \ast\mathsf Q| \cong |\mathsf P| \ast |\mathsf Q|$, justifying the notation, and the operation $\ast$ is often called the \emph{join} of posets, following Quillen \cite[p.~104]{quillen-homotopyproperties}.
\end{notation}

\begin{notation}Let ${\partial\mathsf\Delta}^k$ denote the poset of nonempty proper subsets of $\{0,1,\dots,k\}$. The notation is justified by the fact that $|\partial\mathsf\Delta^k| \cong S^{k-1}$ is the boundary of a $k$-simplex. 
\end{notation}

\begin{lem}\label{hocolim2}
    Let $\mathsf P$ be a poset. Suppose we can write $\mathsf P = \mathsf Q_0 \cup  \mathsf Q_1 \cup \cdots \cup \mathsf Q_k$ with each $\mathsf Q_i$ downwards-closed. If  $\bigcap_{i \in I}\mathsf Q_i$ is contractible for every proper, nonempty subset $I \subset \{0, 1,\ldots, k\}$, then$$ |\mathsf P| \simeq \Sigma^{k} |\mathsf Q_0 \cap \mathsf Q_1\cap \cdots \cap \mathsf Q_k|.$$
\end{lem}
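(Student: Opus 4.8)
The statement is a purely combinatorial fact about posets, so the plan is to reduce it to the well-known Mayer--Vietoris/nerve-type statement for a covering by $k$ closed (here: downwards-closed) pieces, and then to iterate a two-piece version. First I would set up the two-piece case $k=2$: if $\mathsf P = \mathsf Q_1 \cup \mathsf Q_2$ with both $\mathsf Q_i$ downwards-closed and $\mathsf Q_1$, $\mathsf Q_2$ both contractible, then from the homotopy pushout square
\[
\begin{tikzcd}
|\mathsf Q_1 \cap \mathsf Q_2| \arrow[r] \arrow[d] & |\mathsf Q_1| \arrow[d] \\
|\mathsf Q_2| \arrow[r] & |\mathsf P|
\end{tikzcd}
\]
one gets $|\mathsf P| \simeq \Sigma |\mathsf Q_1 \cap \mathsf Q_2|$. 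The key technical point making this a homotopy pushout is that for downwards-closed subposets the geometric realization of a union is the union of the realizations, that $|\mathsf Q_1 \cap \mathsf Q_2| = |\mathsf Q_1| \cap |\mathsf Q_2|$, and that the inclusion $|\mathsf Q_1 \cap \mathsf Q_2| \hookrightarrow |\mathsf Q_1|$ is a cofibration (it is an inclusion of a subcomplex in a CW complex, after barycentric subdivision/triangulation of the order complex). Given these facts the square is a pushout of CW complexes along a cofibration, hence a homotopy pushout, and collapsing the two contractible pieces $|\mathsf Q_i|$ to points identifies $|\mathsf P|$ with the (unreduced) suspension of $|\mathsf Q_1 \cap \mathsf Q_2|$.

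For general $k$ I would induct on $k$. Set $\mathsf R = \mathsf Q_1 \cup \cdots \cup \mathsf Q_{k-1}$, which is downwards-closed, and write $\mathsf P = \mathsf R \cup \mathsf Q_k$. By the $k=2$ case applied to this decomposition it suffices to know that $\mathsf R$ is contractible, that $\mathsf Q_k$ is contractible, and that $\mathsf R \cap \mathsf Q_k = \bigcup_{i=1}^{k-1}(\mathsf Q_i \cap \mathsf Q_k)$ is the union, inside itself, of the $k-1$ downwards-closed pieces $\mathsf Q_i \cap \mathsf Q_k$ whose $I$-fold intersections (for $\varnothing \neq I \subsetneq \{1,\dots,k-1\}$) are $\bigcap_{i \in I \cup \{k\}} \mathsf Q_i$, contractible by hypothesis; so by the inductive hypothesis $|\mathsf R \cap \mathsf Q_k| \simeq \Sigma^{k-2}|\mathsf Q_1 \cap \cdots \cap \mathsf Q_k|$, and feeding this into the $k=2$ case gives $|\mathsf P| \simeq \Sigma|\mathsf R \cap \mathsf Q_k| \simeq \Sigma^{k-1}|\mathsf Q_1 \cap \cdots \cap \mathsf Q_k|$, completing the induction. (The hypotheses needed here — contractibility of $\mathsf Q_k$, of $\mathsf R$, and of all the intersections appearing — are all instances of "$\bigcap_{i \in I}\mathsf Q_i$ contractible for $\varnothing \neq I \subsetneq \{1,\dots,k\}$", using in particular the singleton sets $I = \{i\}$ and applying the inductive hypothesis to the subposet $\mathsf R$ with its own covering.)

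The one genuine gap to fill carefully is the contractibility of the intermediate unions $\mathsf R = \mathsf Q_1 \cup \cdots \cup \mathsf Q_{k-1}$; this is itself proved by a downward induction on the number of pieces using the same two-piece pushout (with the intermediate intersections contractible by hypothesis), so it is cleanest to prove the slightly stronger statement by a single induction: for each $1 \le j \le k$, the union $\mathsf Q_1 \cup \cdots \cup \mathsf Q_j$ is either contractible (if $j < k$) or a $(k-1)$-fold suspension of the total intersection (if $j = k$), with all auxiliary intersections handled by the hypothesis. I expect the main obstacle to be purely bookkeeping: making sure the "contractible for all proper nonempty $I$" hypothesis is invoked with the right index sets at each stage, and cleanly stating the cofibration/realization-commutes-with-union facts for downwards-closed subposets so that every pushout in sight is a genuine homotopy pushout. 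No deep input is required beyond Quillen-style poset topology already recalled above.
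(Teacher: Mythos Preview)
Your proposal is correct, and the bookkeeping you flag is genuinely routine: the contractibility of $\mathsf R=\mathsf Q_1\cup\cdots\cup\mathsf Q_{k-1}$ is an instance of the inductive hypothesis itself (with $k-1$ pieces), since every proper nonempty $I\subset\{1,\dots,k-1\}$ is still proper nonempty in $\{1,\dots,k\}$, and the full intersection $\bigcap_{i<k}\mathsf Q_i$ is contractible for the same reason; hence $|\mathsf R|\simeq\Sigma^{k-2}(\mathrm{pt})\simeq\mathrm{pt}$. The same applies verbatim to $\mathsf R\cap\mathsf Q_k$.

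However, the paper proves this lemma by a different and more direct route. Rather than iterating the two-piece pushout, it writes down in one step an explicit poset map
\[
f\colon \mathsf P \longrightarrow (\mathsf Q_1\cap\cdots\cap\mathsf Q_k)\ast \mathsf S^{k-2},
\]
where $\mathsf S^{k-2}$ is the poset of nonempty proper subsets of $\{1,\dots,k\}$ (so $|\mathsf S^{k-2}|\cong S^{k-2}$): on the total intersection $f$ is the identity, and elsewhere $f(x)=\{i:x\notin\mathsf Q_i\}$. One then checks via Quillen's fibre lemma that $f$ is a homotopy equivalence, which immediately yields $|\mathsf P|\simeq |\mathsf Q_1\cap\cdots\cap\mathsf Q_k|\ast S^{k-2}\cong \Sigma^{k-1}|\mathsf Q_1\cap\cdots\cap\mathsf Q_k|$. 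The paper does note in a subsequent remark that your style of argument---realizing $|\mathsf P|$ as the homotopy colimit of the cubical diagram $I\mapsto |\bigcap_{i\in I}\mathsf Q_i|$---gives an alternative proof. Your inductive pushout argument is essentially an unpacked, hands-on version of that cubical homotopy colimit computation: it is more elementary and avoids constructing the auxiliary map $f$, while the paper's approach is shorter and makes transparent \emph{why} the simplex boundary $\mathsf S^{k-2}$ appears.
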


\begin{proof}
    Let $\mathsf{R}=(\mathsf Q_0 \cap \mathsf Q_1\cap \cdots \cap \mathsf Q_k) \ast \partial \mathsf \Delta^{k}$. We define a map 
    $ f\colon \mathsf P \to \mathsf{R}$ by 
\[ f(x)= \begin{cases} x & \text{ if $x\in \mathsf Q_0 \cap \mathsf Q_1\cap \cdots \cap \mathsf Q_k$,}\\
\{ i : x \not\in \mathsf Q_i \} & \text{ otherwise. } \end{cases} \]
As each $\mathsf Q_i$ is downwards closed it follows that $f$ is a poset map. If $r \in \mathsf R$ belongs to $\mathsf Q_0\cap \mathsf Q_1 \cap \cdots \cap \mathsf Q_k$ then  $f^{-1}(\mathsf R_{\leq r})=\Po_{\leq r}$, which is contractible as it has $r$ as a maximal element. Otherwise $f^{-1}(\mathsf R_{\leq r})=\bigcap_{i \not\in r} \mathsf Q_i$ and $f^{-1}(\mathsf R_{\leq r})$ is contractible by the hypothesis.  Hence $f$ is a homotopy equivalence by the  fibre lemma (\ref{lemma:fibre_lemma}). It follows that 
    \[|\mathsf P| \simeq |\mathsf Q_0 \cap \mathsf Q_1\cap \cdots \cap \mathsf Q_k| \ast S^{k-1} \cong \Sigma^{k}|\mathsf Q_0 \cap \mathsf Q_1\cap \cdots \cap \mathsf Q_k|. \qedhere\]
\end{proof}

\begin{cor}\label{hocolim1}If we have $\mathsf P = \mathsf Q_0 \cup  \mathsf Q_1$ with each $\mathsf Q_i$ downwards-closed, and $\mathsf Q_0$ and $\mathsf Q_1$ are both contractible, then $|\mathsf P| \simeq \Sigma |\mathsf Q_0 \cap \mathsf Q_1|.$
\end{cor}

\begin{rem}
    The preceding corollary could also be proven by observing that the evident commutative square
 \[\begin{tikzcd}
     \vert\mathsf Q_0 \cap \mathsf Q_1\vert \arrow[r]\arrow[d]& \vert\mathsf Q_0 \vert\arrow[d] \\
     \vert\mathsf Q_1\vert \arrow[r]& \vert\mathsf P\vert
 \end{tikzcd}\]
 (which is a pushout square on the level of spaces) is a homotopy pushout. Similarly \cref{hocolim2} could be proven by using that $|\mathsf P|$ is the homotopy colimit of the diagram of spaces given by the family $|\bigcap_{i \in I}\mathsf Q_i |$, and applying \cite[Example 5.7.5]{munson-volic}.\end{rem}

\subsection{Simplifying the RGB poset}

Let $\D(V)$ denote the poset of nonempty disc systems in the handlebody $V$, ordered by inclusion. Thus $|\D(V)|$ is the barycentric subdivision of the simplicial complex $ \mathbf D(V)$ considered in the introduction. 

The following theorem is proven in \cite[Theorem 5.3]{mccullough} when there are no marked points or discs on the boundary of the handlebody. See also \cite{cho}. The general case is proven in \cite[Proposition 8.1]{hatcherwahl} by an induction on the number of marked points/discs, thereby reducing to the case considered by McCullough.

\begin{thm}[McCullough, Hatcher--Wahl]\label{disk cx contractible}
    Suppose that $g>0$. Then $\D(V)$ is contractible.
\end{thm}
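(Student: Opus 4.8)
The plan is to run the classical disc-surgery argument of Hatcher, in the form adapted to handlebodies by McCullough \cite{mccullough} (and, for disc complexes in general compact $3$-manifolds, by Hatcher and Wahl); we only indicate the structure. Since $g>0$ the handlebody contains a non-separating essential disc, and by surgering one disc along another one sees that $|\D(V)|$ is nonempty and connected, so it suffices to prove $\pi_n(|\D(V)|)=0$ for all $n\geq 1$, i.e.\ that every simplicial map $f\colon S^n\to|\D(V)|$ is nullhomotopic. Fix once and for all an essential disc $D_0\subset V$, and let $X_0\subseteq\D(V)$ be the subposet of those disc systems $\Delta$ for which $\Delta\cup\{D_0\}$ is again a disc system. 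This subposet is downward closed, and the monotone self-maps $\Delta\mapsto\Delta\cup\{D_0\}$ and $\Delta\mapsto\{D_0\}$ of $X_0$ satisfy $\mathrm{id}\leq(\Delta\mapsto\Delta\cup\{D_0\})\geq(\Delta\mapsto\{D_0\})$, so (applying twice that comparable maps of posets are homotopic) $|X_0|$ is contractible. The goal is therefore to homotope $f$ into $|X_0|$.

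The engine is a surgery move on disc systems relative to $D_0$. Realize the finitely many disc systems occurring as vertices of the image of $f$ by families of discs in $V$, simultaneously transverse to $D_0$ and in position minimizing the total number of arcs of intersection with $D_0$. If this total is zero then every system lies in $X_0$ and we are done. Otherwise choose an arc of intersection that is outermost on $D_0$, cutting off a half-disc $D'\subseteq D_0$ whose interior is disjoint from all discs in play and whose boundary is an arc of $D_0$ together with an arc on some disc $E$ of one of the systems. Surgering along $D'$ each disc (in each system under consideration) that meets this outermost arc replaces such a disc $E$ by the two discs obtained by cutting $E$ along that arc and capping with parallel copies of $D'$; one discards any component that is inessential or isotopic to a disc already present. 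Two points make this work: irreducibility of $V$ forces at least one of the two pieces of an essential $E$ to remain essential, so nothing disappears entirely; and the surgered family can be made disjoint from everything it was disjoint from before, so for each system $\Delta$ the surgered system $\Delta'$ satisfies that $\Delta$, $\Delta'$, and $\Delta\cup\Delta'$ span a simplex of $|\D(V)|$. Moreover the new configuration has strictly fewer intersection arcs with $D_0$, and a routine innermost/outermost argument shows it is still in minimal position.

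The global homotopy is then assembled by induction on the total number of intersection arcs with $D_0$: because the outermost surgery is performed coherently on all the cells of $\mathrm{im}(f)$ at once, each move yields a homotopy of $f$ --- supported in the union of the simplices spanned by the old and new systems --- to a simplicial map whose vertices have smaller total intersection with $D_0$; after finitely many steps the image lies in $|X_0|$, and contractibility of $|X_0|$ finishes the argument. The part requiring genuine care, and which I expect to be the main obstacle (and which is precisely what \cite{mccullough} and the work of Hatcher and Wahl carry out in detail), is this coherence: one must choose representative discs and isotopies so that ``minimal position'' and the outermost-arc surgery are defined simultaneously and compatibly across all cells of $f$, and one must run the case analysis for the surgered discs (inessential pieces, pieces isotopic to existing discs, and the strictly monotone behaviour of the intersection count) so that the local moves genuinely glue to a homotopy of the whole sphere.
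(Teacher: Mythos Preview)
The paper treats this as a background result: its proof is just a pair of citations, to McCullough for the case $b=p=0$ and to Hatcher--Wahl for the general case, noting that the latter proceed by induction on the number of marked points and discs to reduce to McCullough's theorem. Your proposal instead sketches the underlying surgery argument directly. For the unmarked case $b=p=0$ the outline is sound and is essentially McCullough's proof.

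When markings are present, though, one of your assertions is false. You claim that ``irreducibility of $V$ forces at least one of the two pieces of an essential $E$ to remain essential.'' This holds when $b=p=0$ --- if $\partial E'$ and $\partial E''$ both bound discs in $\partial V$ then so does $\partial E$, via the pair of pants they cobound --- but it can fail otherwise. For instance, take $V$ of genus $1$ with two marked points, let $\partial E$ be a curve bounding a twice-punctured disc in $S$, and let $D_0$ be a meridian disc whose boundary crosses $\partial E$ essentially in two points, with the arc inside that disc separating the two punctures. Then $E\cap D_0$ is a single arc, and surgering $E$ along it produces two discs whose boundaries each encircle a single puncture; both are peripheral and get discarded, so the system $\{E\}$ is annihilated and your homotopy step has nowhere to go. The ``care'' you defer to the references does not actually cover this: McCullough only treats $b=p=0$, and Hatcher--Wahl handle markings by the induction on $b+p$ that the paper cites, not by running the surgery directly. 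Your sketch needs that inductive reduction (or some other fix) to cover the stated generality.
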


\begin{defn}Let 
$\SD(V)$ denote the poset of \emph{simple} disc systems in the handlebody $V$, ordered by inclusion. If the genus of $V$ is zero, then the empty disc system is simple, and is a minimal element of $\SD(V)$.
\end{defn}

Hence, if $g>0$, then $\mathsf D(V)$ is the union of the disjoint subposets $\SD(V)$ and $\NS(V)$, the former being upwards-closed and the latter being downwards-closed. When $g=0$, $\mathsf S(V)=\mathsf D(V) \cup \{\varnothing\}$, and $\NS(V)=\varnothing$. 

It follows from \cref{disk cx contractible} that $\SD(V)$ is always contractible, an observation due to Giansiracusa \cite[Proposition 6.3]{giansiracusa}. We recall his argument.

\begin{prop}\label{prop: S contractible}
    The realization $|\SD(V)|$ is contractible.
\end{prop}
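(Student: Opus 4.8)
The plan is to deduce contractibility of $\SD(V)$ from the already-established contractibility of the full disc system poset $\D(V)$ (\cref{disk cx contractible}) by exhibiting a deformation retraction of $\D(V)$ onto $\SD(V)$ implemented by a monotone poset map, and then invoking \cref{monotone}. The key observation is that there is a natural way to "simplify" an arbitrary disc system: given a disc system $D$, let $r(D)$ be obtained from $D$ by discarding all discs that are not needed to make the complement genus zero — more precisely, $r(D)$ should be a canonically chosen \emph{simple} sub-system of $D$ if $D$ itself is non-simple, or $D$ itself if $D$ is already simple. The subtlety is that a non-simple system need not contain any simple sub-system at all (its complement might have positive genus no matter which discs one keeps), so "simplifying downwards" does not work directly; instead one simplifies \emph{upwards}.

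So the approach I would actually take is the following. When $g > 0$, every disc system $D$ can be enlarged to a simple disc system: pick a positive-genus piece of $V \setminus D$ and add a disc cutting off some of its genus, repeating until all pieces are genus zero. This gives many choices, so to get a well-defined monotone map one needs a canonical completion, which is awkward. A cleaner route, which I expect is the one used here, is: first observe that $\SD(V)$ is the poset of simple disc systems and consider the subposet $\D(V)_{\geq D_0}$... no — better, use Quillen's fibre lemma directly. Define the inclusion $i \colon \SD(V) \hookrightarrow \D(V)$; for each $D \in \D(V)$ we must show that $i^{-1}(\D(V)_{\leq D}) = \{$simple disc systems contained in $D\}$ is contractible. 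But this can be empty. So instead use $\D(V)_{\geq D}$: the fibre $i^{-1}(\D(V)_{\geq D})$ is the poset of simple disc systems \emph{containing} $D$. This is nonempty (by the completion argument above), and it should be contractible — indeed it is isomorphic to the poset of simple disc systems in the (disconnected) handlebody $V \setminus D$, relative to nothing, i.e.\ of the pieces of positive genus, which by induction on genus/complexity reduces to the same statement for smaller handlebodies, with the genus-zero pieces contributing a join with a point (the empty system). One then shows that $\SD$ of a disjoint union of handlebodies is the join of the individual $\SD$'s, and that a join involving a contractible (indeed cone) factor is contractible.

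The induction is on the genus $g$ (or more carefully on $\sum(g_i)$ over pieces, together with complexity): the base case is $g = 0$, where $\SD(V)$ has the empty disc system as a minimal element, hence is a cone, hence contractible. For the inductive step, given $D \in \D(V)$, the handlebody $V \setminus D$ is a disjoint union of handlebodies of strictly smaller total genus (since $D$ is nonempty and every essential disc either drops the genus or disconnects; in the disconnecting case each piece has smaller genus, in the non-disconnecting case the genus strictly drops), and $i^{-1}(\D(V)_{\geq D})$ is the join of the $\SD$'s of the positive-genus pieces with a single point coming from the simple systems of the genus-zero pieces — or more simply, it is itself the poset of simple disc systems of $V \setminus D$, which contains a simple system built entirely from the positive-genus pieces and is a cone on... hmm, one has to be a little careful here, but each positive-genus piece has strictly smaller genus so its $\SD$ is contractible by induction, and the join of contractible posets is contractible. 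Then Quillen's fibre lemma gives that $i$ is a homotopy equivalence, and since $\D(V)$ is contractible by \cref{disk cx contractible}, so is $\SD(V)$.

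The main obstacle is the bookkeeping around the fibres $i^{-1}(\D(V)_{\geq D})$: one must correctly identify this poset with the simple-disc-system poset of the cut-open handlebody (remembering the marked discs created along $D$ and the original marked discs/points), verify it is nonempty, and set up the induction so that the complexity genuinely decreases — in particular handling the genus-zero pieces (which contribute the empty disc system and hence a cone point, forcing contractibility regardless) and making sure that the "join" decomposition $\SD(V_1 \sqcup V_2) \cong \SD(V_1) \ast \SD(V_2)$ is stated and used correctly. Once that structural lemma is in place, the rest is a routine application of \cref{monotone} or Quillen's fibre lemma together with \cref{disk cx contractible}.
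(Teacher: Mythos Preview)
Your approach is the same as the paper's: apply Quillen's fibre lemma to the inclusion $\SD(V)\hookrightarrow \D(V)$, identify the upward fibre over $D$ with simple disc systems of the cut-open handlebody, and induct. Two points of bookkeeping to tighten. First, the fibre $i^{-1}(\D(V)_{\geq D})$ is a \emph{product} $\prod_i \SD(V_i)$, not a join: a simple system containing $D$ is exactly a choice of simple system in each piece $V_i$ (with the empty system allowed in genus-zero pieces, since there $\varnothing\in\SD(V_i)$). This does not affect the conclusion, since a product of contractible posets is contractible, but your join formula $\SD(V_1\sqcup V_2)\cong \SD(V_1)\ast\SD(V_2)$ is false as stated. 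Second, your induction on genus alone does not terminate: a separating disc can cut off a genus-zero piece and leave a piece of the \emph{same} genus $g$ (now with an extra marked boundary disc), so your claim that ``in the disconnecting case each piece has smaller genus'' is wrong. The paper inducts on $-\chi(S)=2g-2+b+p$, which strictly decreases on every piece because an essential nonperipheral disc never bounds a trivial piece; your parenthetical ``or more carefully \dots together with complexity'' is exactly the fix needed.
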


\begin{proof}
    As just noted, $\SD(V)$ has a minimal element if $g=0$. For $g>0$ it suffices by \cref{disk cx contractible} to show that the natural inclusion $f:\SD(V)\to\D(V)$ is a homotopy equivalence. We apply Quillen's fibre lemma (\ref{lemma:fibre_lemma}). If $D \in \D(V)$ cuts $V$ into smaller handlebodies $V_1,\dots,V_s$ (each with some number of marked points and discs on its boundary), then 
    \[ f^{-1}(\D(V)_{\geq D}) \cong \prod_{i=1}^s \SD(V_i).\](The cartesian product of posets is ordered coordinatewise.)
    But each factor $\SD(V_i)$ may be assumed contractible by induction on the quantity $-\chi(S) = 2g-2+b+p$. 
\end{proof}

For $D \in \RGB$, let $R(D)$ and $B(D)$ be the subsystems of red and blue discs, respectively.

\begin{prop}\label{p:colour_forgetting_maps}
The poset map $f: \RGB \to \SD(V)$ given by forgetting colours is a homotopy equivalence. If $g>0$, then the same is true for the restrictions of $f$ to the subposets
\begin{align*}
{\mathsf Q_0} &= \{D \in \RGB : f(D) \setminus B(D) \in \NS(V) \} \text{ and} \\
{\mathsf Q_1} &= \{D \in \RGB : R(D) \neq \emptyset \},
\end{align*}
respectively. Hence $\RGB$ is contractible, and so are $\mathsf Q_0$ and $\mathsf Q_1$ when $g>0$.
\end{prop}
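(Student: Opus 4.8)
The plan is to verify the hypotheses of Quillen's fibre lemma for each of the three poset maps in turn, using the colour-forgetting map $f \colon \RGB \to \SD(V)$ and the already-established contractibility of $\SD(V)$ from \cref{prop: S contractible}. For the full map $f \colon \RGB \to \SD(V)$, I would compute the fibre $f^{-1}(\SD(V)_{\leq D})$ over a simple disc system $D$: this consists of all RGB-colourings of subsystems of $D$. I expect this fibre to have a canonical minimum or maximum element — for instance, colouring every disc of every subsystem blue is a largest element once one checks the order relations — so the fibre is contractible, and Quillen's fibre lemma gives that $f$ is a homotopy equivalence. Hence $\RGB$ is contractible by \cref{prop: S contractible}.

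For the restriction to $\mathsf Q_2 = \{D \in \RGB : R(D) \neq \emptyset\}$, I would again use Quillen's fibre lemma for the restricted map $f|_{\mathsf Q_2} \colon \mathsf Q_2 \to \SD(V)$. The fibre over a simple disc system $D \in \SD(V)$ now consists of all RGB-colourings of nonempty subsystems of $D$ having at least one red disc. The point is that this fibre still has a maximum (or minimum) element — for example, the subsystem being all of $D$ with all discs red — so it is again contractible; here one uses $g>0$ only to ensure $\SD(V)$ itself is contractible via \cref{disk cx contractible} (when $g=0$ the empty system is simple and has no nonempty red subsystem, which is why the genus hypothesis enters).

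The main obstacle is the restriction to $\mathsf Q_1 = \{D \in \RGB : D \setminus B(D) \in \NS(V)\}$. Here the fibre of $f|_{\mathsf Q_1}$ over $D \in \SD(V)$ is the set of RGB-colourings of nonempty subsystems $E \subseteq D$ whose non-blue part $E \setminus B(E)$ is a nonempty non-simple disc system. This fibre is \emph{not} obviously conical, since requiring the non-blue part to be non-simple is a genuinely nontrivial constraint. The strategy I would adopt is to analyse this fibre directly: fix the simple disc system $D$, which cuts $V$ into genus-zero pieces, and observe that a subsystem $E$ of $D$ has non-simple non-blue part precisely when enough discs are left non-blue (i.e.\ coloured red or green) to leave some cut piece of positive genus. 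One can then try to contract the fibre by a sequence of monotone moves (\cref{monotone}): e.g.\ first push all green discs to blue wherever doing so keeps the non-blue part non-simple, then argue the resulting subposet is conical. Alternatively — and this is probably cleaner — one can factor $f|_{\mathsf Q_1}$ through an intermediate poset and use Quillen's fibre lemma twice, or combine it with the observation that $\mathsf Q_1$ deformation retracts onto a subposet closely related to $\SD(V) \times (\text{something contractible indexing colourings})$. I expect the bulk of the work, and the one genuinely delicate point requiring the hypothesis $g>0$ in an essential way, to be showing that these fibres over $\SD(V)$ are contractible for $\mathsf Q_1$; once that is done, Quillen's fibre lemma together with \cref{prop: S contractible} immediately yields that $f|_{\mathsf Q_1}$ is a homotopy equivalence and hence that $\mathsf Q_1$ is contractible, and the three assertions "$\RGB$, $\mathsf Q_1$, $\mathsf Q_2$ are contractible" follow at once.
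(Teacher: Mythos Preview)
Your overall strategy---apply Quillen's fibre lemma to the colour-forgetting map and then invoke contractibility of $\SD(V)$---is exactly the paper's. However, there is a genuine gap: the fibres are \emph{not} conical. Recall that in $\RGB$ one has $E \leq E'$ iff $E'$ is obtained from $E$ by adding blue discs, turning green discs blue, or turning green discs red. In particular, red and blue are both ``terminal'' colours: one can never pass from a configuration in which a given disc is red to one in which it is blue (or vice versa) by moving either up or down in the poset. Consequently $D_{\mathrm{blue}}$ is incomparable to any colouring with a red disc in $D$, and $D_{\mathrm{red}}$ is incomparable to any colouring with a blue disc in $D$; neither is a maximum or minimum of any of your fibres. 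So the cases you flag as easy are not.

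What the paper does instead---and what is genuinely required---is to contract each fibre via a \emph{zigzag} of monotone self-maps, using \cref{monotone}. For the full $\RGB$ (working with the upward fibre $f^{-1}(\SD(V)_{\geq D})$ rather than your downward one) the three-step zigzag ``turn red discs green (decreasing), turn green discs blue (increasing), delete blue discs outside $D$ (decreasing)'' retracts onto $\{D_{\mathrm{blue}}\}$. Once one has this, your difficulty assessment turns out to be reversed: $\mathsf Q_1$ is the \emph{easy} case, since the identical zigzag stays inside $\mathsf Q_1$ throughout (after the second step the non-blue part is empty, hence non-simple when $g>0$). It is $\mathsf Q_2$ that needs extra care, because one must keep at least one red disc at every stage; the paper uses a longer five-step zigzag that first recolours the discs of $D$ red and only afterwards strips away the discs outside $D$, landing on $\{D_{\mathrm{red}}\}$. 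Finally, note that $\SD(V)$ is contractible for all $g$ by \cref{prop: S contractible}; the hypothesis $g>0$ enters only to ensure that every $D \in \SD(V)$ is nonempty, so that $D_{\mathrm{blue}}$ and $D_{\mathrm{red}}$ actually lie in $\mathsf Q_1$ and $\mathsf Q_2$ respectively.
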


\begin{proof}For $D \in \SD(V)$, let $D_{\mathrm{blue}}$ and $D_{\mathrm{red}}$ denote the elements of $\RGB$ given by the disc system $D$ with all discs coloured blue or red, respectively.

For the first part, it suffices by the fibre lemma to show that $f^{-1}(\SD(V)_{\geq D})$ deformation retracts to $\{D_{\mathrm{blue}}\}$ for all $D \in \SD(V)$. We do this in three steps, using \cref{monotone}. For $E \in f^{-1}(\SD(V)_{\geq D})$, first turn all red discs in $E$ green (which is deflating), then turn all green discs blue (which is inflating), and finally forget all blue discs that do not belong to $D$ (which is deflating). Each of these maps is a homotopy equivalence onto its image by \cref{monotone}, so that $f^{-1}(\SD(V)_{\geq D})$ is contractible. 

Now assume that $g>0$. If $E \in \RGB$, then turning red discs green or deleting blue discs does not change $f(E) \setminus B(E)$, and turning green discs blue makes $f(E) \setminus B(E)$ smaller. As a result, the three operations above preserve the property of an element of $\RGB$ being in $\mathsf Q_0$.  Hence the same argument shows also that $\mathsf Q_0 \cap f^{-1}(\SD(V)_{\geq D}) \simeq \{D_{\mathrm{blue}}\}$, so that also $\mathsf Q_0 \to \SD(V)$ is a homotopy equivalence.

For $\mathsf Q_1$ one needs to take a slightly different approach and build a deformation retract of $\mathsf Q_1 \cap f^{-1}(\SD(V)_{\geq D})$ onto $\{D_{\mathrm{red}}\}$. For $E \in \mathsf Q_1 \cap f^{-1}(\SD(V)_{\geq D})$ we first turn all blue discs of $E$ that belong to  $ D$ green (deflating), and then turn all green discs of $E$ that belong to $ D$ red (inflating). At this point, all discs of $E$ that belong to $D$ are red. Then all red discs in $E$ that do not belong to $D$ are turned green (deflating), green discs in $E$ that do not belong to $D$ are turned blue (inflating), and blue discs in $E$ that do not belong to $D$ are deleted (deflating). \end{proof}

\begin{rem}
    Let us make explicit where the assumption $g>0$ is used in the preceding proof. The argument in the third paragraph fails when $g=0$, since in that case $D_{\mathrm{blue}} \notin \mathsf Q_0$ and in fact $\mathsf Q_0 = \varnothing$. The argument in the fourth paragraph breaks down when $g=0$, since in this case we may have $D=\varnothing$ in $\SD(V)$, in which case $D_{\mathrm{red}} \notin \mathsf Q_1$.
\end{rem}

\begin{rem}
    It may be helpful to think heuristically about the deformation retraction onto $\{D_{\mathrm{blue}}\}$ in the preceding proposition as flowing along a vector field on Teichm\"uller space that increases the length of the shortest geodesics on a hyperbolic surface. Compare with the construction of Ji--Wolpert \cite[Section 3]{jiwolpert}, also used in \cite{hainautpetersen}. Similarly the deformation retraction onto $\{D_{\mathrm{red}}\}$ is heuristically given by first shortening all geodesics within the curve system $D$, and then lengthening all closed geodesics outside $D$. 
\end{rem}

\begin{proof}[Proof of Theorem~\ref{thm:simplifying}]
We will apply \cref{hocolim1} to $\dRGB$ using the downwards-closed subposets $\mathsf Q_0$ and $\mathsf Q_1$ in \cref{p:colour_forgetting_maps}. From the definition of $\dRGB$ it is clear that $\dRGB =\mathsf Q_0 \cup \mathsf Q_1$, and in Proposition~\ref{p:colour_forgetting_maps} we showed that $\mathsf Q_0$ and $\mathsf Q_1$ are contractible. It remains to show $$|\mathsf Q_0 \cap \mathsf Q_1|\simeq |\NS(V)|.$$
There is a poset map $g:\mathsf Q_0 \cap \mathsf Q_1\to \NS(V)^{\text{op}}$ given by taking $E \in \mathsf Q_0 \cap \mathsf Q_1$ to the disc system $g(E)$ consisting of red and green discs in $E$. Then $g(E)$ is non-simple as $E \in \mathsf Q_0$, and nonempty as $E \in \mathsf Q_1$. Applying the fibre lemma to $g:\mathsf Q_0 \cap \mathsf Q_1\to \NS(V)^{\text{op}}$, it suffices to show that each fibre $g^{-1}(\NS(V)^\text{op}_{\leq D})$ is contractible. A point $E \in \mathsf Q_0 \cap \mathsf Q_1$ lies in $g^{-1}(\NS(V)^\text{op}_{\leq D})$ if and only if the set of green and red discs in $E$ contains $D$. Define a self-map $\phi$ of $g^{-1}(\NS(V)^\text{op}_{\leq D})$ as follows: $\phi(E)$ is given by turning any red disc in $E$ not belonging to $D$ green, followed by turning any green disc of $E$ not belonging to  $D$ blue, followed by turning any green disc of $E$ belonging to $D$ red. This is a deflating map followed by two inflating maps, so is a homotopy equivalence onto its image by \cref{monotone}. Elements of $\mathrm{Im}(\phi)$ are precisely extensions of $D$ (with all discs coloured red) by simple disc systems in $V \setminus D$ (with all discs coloured blue), so that $\mathrm{Im}(\phi) \cong \SD(V\setminus D)$, which is contractible.
\end{proof}

\begin{rem} \label{rem:alternative_contractibility_proof} As we showed that $|\RGB(V)| \simeq \Y(V)$ in \cref{thm:combinatorial model 2}, combining this with \cref{p:colour_forgetting_maps} gives a new proof of contractibility of $\Y(V)$, which is the most substantial part of \cref{hp}.

A key input in the proof of \cref{hp} given in \cite{hainautpetersen} was Giansiracusa's theorem \cite{giansiracusa} that the derived modular envelope of the framed little disc operad is equivalent to the handlebody modular operad. In turn, an important ingredient in Giansiracusa's proof of this theorem is the contractibility of $|\SD(V)|$. Thus in both approaches the result is fundamentally a consequence of contractibility of the complex of simple disc systems. \end{rem}

\section{Proof of the main theorems}

In this section, we prove Theorems \ref{thm:A}, \ref{thm:B}, and \ref{thm:C} from the introduction. To repeat the strategy:  $\hmod(V)$ is a virtually torsion-free group, and we have constructed a contractible manifold $\mathcal{Y}(V)$ of dimension $6g-6+3b+2p$ that admits a proper, cocompact action of $\hmod(V)$ (\cref{hp}). We then showed that $\partial \mathcal{Y}(V)$ is homotopy equivalent to $\Sigma \compNS(V)$ (\cref{thm:combinatorial model 2} and \cref{thm:simplifying}). The criterion in \cref{criterion} then shows that Theorems \ref{thm:A} and \ref{thm:B} are at this point both equivalent to each other, and that both of them imply \cref{thm:C}. 

For parts  of this section it will be convenient to work ``before barycentric subdivision'' --- recall that $\compD(V)$ and $\compNS(V)$ denote simplicial complexes where vertices are given by discs and simplices by disc systems, as defined in connection with \cref{thm:B} of the introduction. The barycentric subdivisions of $\compD$ and $\compNS$ are isomorphic to the geometric realizations of the posets $\mathsf D$ and $\mathsf{NS}$, respectively. 

The first step is high (homological) connectivity of $\compNS$, which will be an easy consequence of the fact that we already know that $\operatorname{vcd}\hmod(V) =d(g,b,p)$ \cite{hirose}, with notation as in \cref{thm:A}. In fact, we will only need the inequality $\operatorname{vcd} \hmod(V)\leq d(g,b,p)$, which follows directly from the fact that $\hmod(V)$ is a subgroup of $\mcg(S)$, and Harer's theorem \cite{harer} that $\operatorname{vcd} \mcg(S)=d(g,b,p).$

\begin{prop}\label{high connectivity} Keep hypotheses and notation as in \cref{thm:B}. Then $$\widetilde H_k(\compNS(V),\Z)=0$$ for all $k<\nu(g,b,p).$
\end{prop}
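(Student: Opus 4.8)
The plan is to extract this purely formally from the geometric model $\mathcal{Y}(V)$ together with the Bieri--Eckmann machinery recalled in \cref{action}, so that the argument amounts to a Lefschetz-duality degree count. First I would pass to a torsion-free finite-index subgroup $\Gamma \leq \hmod(V)$ — one exists since $\hmod(V)$ is virtually torsion-free — so that $\Gamma$ acts freely, properly and cocompactly on the contractible oriented manifold $\mathcal{Y}(V)$ of dimension $D := 6g-6+3b+2p$ (\cref{hp}). Its underlying space is a topological manifold-with-boundary, so \cref{action} applies and gives
\[ H^k(\Gamma,\Z[\Gamma]) \;\cong\; \widetilde H_{D-1-k}(\partial\mathcal{Y}(V),\Z) \qquad \text{for all } k. \]

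Next I would feed in the upper bound on the virtual cohomological dimension. Since $\hmod(V)$ is a subgroup of $\mcg(S)$, Harer's computation \cite{harer} gives $\operatorname{cd}\Gamma = \operatorname{vcd}\hmod(V) \leq d(g,b,p)$, so in particular $H^k(\Gamma,\Z[\Gamma]) = 0$ for $k > d(g,b,p)$. Combined with the isomorphism above, $\widetilde H_j(\partial\mathcal{Y}(V),\Z) = 0$ for every $j < D - 1 - d(g,b,p)$. Now $\partial\mathcal{Y}(V) \simeq \Sigma\compNS(V)$ by \cref{thm:combinatorial model 2} and \cref{thm:simplifying} (here the hypothesis $g>0$ is used), so $\widetilde H_j(\partial\mathcal{Y}(V),\Z) \cong \widetilde H_{j-1}(\compNS(V),\Z)$, whence $\widetilde H_k(\compNS(V),\Z) = 0$ for $k < D - 2 - d(g,b,p)$.

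It then remains to check the numerology $D - 2 - d(g,b,p) = \nu(g,b,p)$: when $b+p=0$ this reads $(6g-6) - 2 - (4g-5) = 2g-3$, and when $b+p>0$ it reads $(6g-6+3b+2p) - 2 - (4g+2b+p-4) = 2g-4+b+p$, matching the two cases in the definition of $\nu$. This is essentially the whole proof, and there is genuinely no obstacle here; the only points deserving a word of care are that $\mathcal{Y}(V)$ — a priori a smooth manifold-with-corners — is a topological manifold-with-boundary so that \cref{action} is applicable, and that the choice of the auxiliary subgroup $\Gamma$ is immaterial since $\partial\mathcal{Y}(V)$, and hence its homology, is fixed.
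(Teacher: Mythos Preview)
Your proposal is correct and follows essentially the same route as the paper: pass to a torsion-free finite-index subgroup, apply the Bieri--Eckmann isomorphism of \cref{action} to the action on $\mathcal{Y}(V)$, use the upper bound $\operatorname{vcd}\hmod(V)\leq d(g,b,p)$ coming from Harer via the inclusion $\hmod(V)\leq\mcg(S)$, and then translate through $\partial\mathcal{Y}(V)\simeq\Sigma\compNS(V)$ and the resulting degree shift. Your explicit verification of the numerology and the remark that the manifold-with-corners $\mathcal{Y}(V)$ is a topological manifold-with-boundary are welcome clarifications, but there is no substantive difference from the paper's argument.
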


\begin{proof}Let $G$ be a torsion-free finite index subgroup of $\hmod(V)$. Apply \cref{action} to the action of $G$ on $\Y(V)$. As the action of $G$ on $\Y(V)$ is free and cocompact, $H^*_c(\Y(V),\mathbb{Z})\cong H^*(G,\mathbb{Z}[G])$. Since $\operatorname{cd}(G)=d(g,b,p)$, we see that $H^k_c(\Y(V),\mathbb{Z})=0$ for $k>d(g,b,p)$. Lefschetz duality then tells us that $\widetilde H_k(\partial \Y(V),\Z)=0$ for $$k<\dim(\Y(V))-d(g,b,p)-1.$$
Now we also have $\widetilde H_k(\partial \Y(V),\Z) \cong \widetilde H_{k-1}(\compNS(V),\Z)$ by \cref{thm:simplifying}. This finishes the proof, since $\dim(\Y(V)) - d(g,b,p)-2=\nu(g,b,p).$
\end{proof}

\begin{defn}Let $X$ be a topological space. We say that $X$ is \emph{homologically of dimension $\leq d$} if $H_k(X,\Z)$ vanishes for $k>d$, and $H_d(X,\Z)$ is a free abelian group.\end{defn}

\begin{rem}
    A CW complex of dimension $\leq d$ is homologically of dimension $\leq d$. 
\end{rem}

The second step, then, will be to show that $\compNS(V) \cong |\NS(V)|$ is homologically of dimension $\leq \nu(g,b,p)$. We do this using the following \cref{dimension lemma}. (Similar arguments were used by Harer \cite{harer} and Ivanov \cite{ivanov}.) 

\begin{lem}\label{MV lemma}Let $\mathsf Q$ be a poset, and $x \in \mathsf Q$ a minimal element.  If $|\mathsf Q \setminus \{x\}|$ is homologically of dimension $\leq d$, and $|\mathsf Q_{>x}|$ is homologically of dimension $\leq (d-1)$, for some integer $d \geq 0$, then $|\mathsf Q|$ is homologically of dimension $\leq d$. Moreover, the map $H_d(|\mathsf Q \setminus \{x\}|,\Z)\to H_d(|\mathsf Q|,\Z)$ is a split injection.
\end{lem}

\begin{proof}
    Let $A = |\mathsf Q \setminus \{x\}|$, $B = |\mathsf Q_{>x}|$, and $C = |\mathsf Q_{\geq x}|$. Note that $C$ is a cone over $B$. The result is a consequence of the Mayer--Vietoris sequence:
 \[ \dots
 \to H_p(B,\Z) \to H_p(A,\Z) \oplus H_p(C,\Z) \to H_p(|\mathsf Q|,\Z) \to H_{p-1}(B,\Z) \to \dots \]The final statement, i.e.~that the map $H_d(A,\Z)\to H_d(|\mathsf Q|,\Z)$ is a split injection, follows since its cokernel is a subgroup of $H_{d-1}(B,\Z)$ and hence free abelian.\end{proof}

\begin{defn}We define the \emph{height} of a poset as the supremum of the heights of all its elements (\cref{ht of element}). Equivalently, the height of $\Po$ is the dimension of $|\Po|$.
\end{defn}

\begin{lem}\label{dimension lemma}
	Let $\Po$ be a countable poset of finite height. Assume for all $x \in \Po$ that $|\Po_{>x}|$ is homologically of dimension $\leq (d-1)$, for some $d \geq 0$. Then $|\Po|$ is homologically of dimension $\leq d$. 
\end{lem}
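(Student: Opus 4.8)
The plan is to induct on the value of the strictly decreasing function $r$, or equivalently to filter $|\Po|$ by the subposets $\Po_{\geq n} = r^{-1}(\{0,1,\dots,n\})$... but actually the cleanest route is a downward induction on $\Po$ using the standard "coning off minimal strata'' argument. Concretely, I would pick an element $x \in \Po$ which is minimal subject to $r(x)$ being as large as possible (so that $\Po_{<x} = \Po \setminus \{$elements $\geq x$ together with incomparable ones$\}$, and in particular $x$ does not lie below anything we still need to attach), and consider the two downwards-closed subposets $\Po \setminus \{x\}$ and $\Po_{\leq x}$, whose intersection is $\Po_{<x}$ and whose union is $\Po$. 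Since $\Po_{\leq x}$ has a maximum it is contractible, and $|\Po| = |\Po \setminus \{x\}| \cup_{|\Po_{<x}|} |\Po_{\leq x}|$ is obtained from $|\Po \setminus \{x\}|$ by coning off the subspace $|\Po_{<x}|$.

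The key point is then the Mayer--Vietoris sequence for this pushout. Coning off $|\Po_{<x}|$ inside $|\Po \setminus \{x\}|$ affects homology by a mapping-cone construction: there is a long exact sequence relating $H_*(|\Po\setminus\{x\}|)$, $H_*(|\Po_{<x}|)$, and $H_*(|\Po|)$. The hypothesis does not directly control $|\Po_{<x}|$, though — it controls $|\Po_{>y}|$ for all $y$. So I would instead run the induction so as to attach elements in order of \emph{decreasing} $r$-value, i.e.\ build $|\Po|$ up from the top. Let $n_0 = \max r$ and for $n \leq n_0$ set $X_n = |r^{-1}(\{n, n+1, \dots, n_0\})|$ viewed inside $|\Po|$ — wait, that is not a subcomplex in general since $r$ decreasing means elements of low $r$ can lie \emph{above} elements of high $r$. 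The correct monotonicity: $r$ strictly decreasing means $y < x \Rightarrow r(y) > r(x)$, so $r^{-1}(\{0,\dots,n\})$ is downwards-closed — good, so the filtration $\Po = F_{n_0} \supset \cdots \supset F_0$ with $F_n = r^{-1}(\{0,\dots,n\})$ is by downwards-closed subposets, built up by adding, at stage $n$, all elements $x$ with $r(x) = n$; these are pairwise incomparable and each such $x$ contributes a cone on $|(F_{n-1})_{<x}| = |\Po_{<x}|$. Hmm, but again I want $|\Po_{>x}|$, not $|\Po_{<x}|$.

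I think the resolution is to apply the dual statement: replace $\Po$ by $\Po^{\op}$, on which $-r$ is strictly decreasing, and the hypothesis about $|\Po_{>x}|$ becomes a hypothesis about $|(\Po^{\op})_{<x}|$, i.e.\ about lower intervals. So without loss of generality I would assume $|\Po_{<x}|$ is homologically of dimension $\leq d-1$ for all $x$ (equivalently re-prove the lemma for $\Po^{\op}$, using that $|\Po|\cong|\Po^{\op}|$ up to homotopy). Then filter by the downwards-closed $F_n$ as above, induct on $n$, and at each stage note that $|F_n|$ is obtained from $|F_{n-1}|$ by attaching, for each $x$ with $r(x) = n$, a cone on $|(F_{n-1})_{<x}|$. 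Since $x$ is minimal in $F_n$ among the new elements, $(F_{n-1})_{<x} = (F_n)_{<x} = \Po_{<x}$, which by hypothesis is homologically of dimension $\leq d-1$. Coning off a space which is homologically $(d-1)$-dimensional can raise the top nonvanishing homological degree of the ambient complex to at most $d$ (the cone on a $(d-1)$-dimensional object contributes reduced homology of $|\Po_{<x}|$ shifted up by one, landing in degree $\leq d$, and freeness in the top degree is preserved since $H_{d-1}$ of the coned subspace is free and the relevant connecting map lands in a free group). A Mayer--Vietoris / mapping-cone argument, done one $x$ at a time within stage $n$ (the cones are attached along disjoint subcomplexes since distinct $x, x'$ of the same $r$-value are incomparable, so $\Po_{\leq x} \cap \Po_{\leq x'} \subseteq F_{n-1}$), gives the inductive step. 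The base case $F_{n_0}$ is a discrete set (all elements of maximal $r$-value are incomparable), which is homologically of dimension $\leq 0 \leq d$.

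The main obstacle is bookkeeping the homological-dimension bound across the mapping-cone step: one must check both that $H_k$ vanishes for $k > d$ and that $H_d$ stays free, and the latter requires knowing that the connecting homomorphism $H_{d-1}(|\Po_{<x}|) \to H_{d-1}(|F_{n-1}|)$ interacts well with freeness — this is where the hypothesis that $H_{d-1}(|\Po_{<x}|)$ is \emph{free} (not merely that $H_k = 0$ for $k \geq d$) gets used, since a quotient of a free abelian group by the image of a free abelian group need not be free in general, but here the relevant group $H_d(|F_n|)$ sits in a short exact sequence $0 \to H_d(|F_{n-1}|) \to H_d(|F_n|) \to \ker\big(H_{d-1}(\bigsqcup_x |\Po_{<x}|) \to H_{d-1}(|F_{n-1}|)\big) \to 0$ whose outer terms are free (the kernel is a subgroup of a free abelian group), hence the middle is free. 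I would make sure to state the relevant mapping-cone long exact sequence cleanly at the outset so this freeness argument is transparent.
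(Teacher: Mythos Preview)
Your argument is correct and is essentially the paper's proof, but you have tied yourself in an unnecessary knot. The slip is in the sentence ``$r$ strictly decreasing means $y<x \Rightarrow r(y)>r(x)$, so $r^{-1}(\{0,\dots,n\})$ is downwards-closed'': that implication gives the opposite conclusion. If $x\in F_n$ and $y>x$, then $r(y)<r(x)\leq n$, so $y\in F_n$; hence each $F_n=r^{-1}(\{0,\dots,n\})$ is \emph{upward}-closed. Consequently the new elements at stage $n$ are \emph{minimal} in $F_n$, and attaching them cones off $|(F_n)_{>x}|=|\Po_{>x}|$, which is precisely the space controlled by the hypothesis. This is exactly what the paper does: it filters by the upward-closed $\Po_i=\{x:r(x)\leq i\}$, adds minimal elements one at a time, and runs Mayer--Vietoris on the pushout $|\Po_i|=|\Po_i\setminus\{x\}|\cup_{|\Po_{>x}|}|\Po_{\geq x}|$. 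Your passage to $\Po^{\op}$ is a valid workaround (after shifting $-r$ back into $\mathbb N$), but it simply undoes the sign error; you can delete it.

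Two minor remarks. First, your phrase ``the cones are attached along disjoint subcomplexes'' is not quite right---the bases $|\Po_{>x}|$ for distinct $x$ with $r(x)=n$ can overlap inside $|F_{n-1}|$---but what you actually need and use is only that the \emph{cone points} are independent, i.e.\ $(F_n)_{\geq x}\cap (F_n)_{\geq x'}\subseteq F_{n-1}$, so the attachments can be performed one at a time. Second, your explicit verification that $H_d(|F_n|,\Z)$ remains free (as an extension of a subgroup of a free group by a free group) is more careful than the paper's one-line appeal to Mayer--Vietoris, and worth keeping.
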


\begin{proof}We prove the lemma by induction on the height of $\Po$. The base case is that $\Po$ has height $0$, in which case the result is obvious, since $|\Po|$ is discrete. So assume that $\Po$ has finite height and that the result is known for all posets of strictly smaller height. Let $\Po_{\min}$ be the subset of minimal elements of $\Po$, and $\Po' = \Po \setminus \Po_{\min}$. We enumerate the elements of $\mathsf P_{\min}$ by $x_1,x_2,\ldots$ and define for $k \geq 0$ \[\mathsf P_k = \mathsf P' \cup \{x_1,\dots,x_k\}.\] We now prove, by induction on $k$, that $|\Po_k|$ is homologically of dimension $\leq d$ for all $k$. Since $\Po_0=\Po'$ has strictly smaller height than $\Po$, it is homologically of dimension $\leq d$ by our induction on height, which proves the base case. The induction step uses \cref{MV lemma} with $\mathsf Q= \Po_k$ and $x=x_k$ (and hence $\mathsf Q \setminus \{x\} = \Po_{k-1}$). 

Finally we claim that also $|\Po| = \varinjlim_k |\Po_k|$ is homologically of dimension $\leq d$. The only thing which is  not clear is that $H_d(|\Po|,\Z)$ is free abelian, as a filtered colimit of free abelian groups is in general only torsion-free. But in this case we are taking a sequential colimit of split injections, by \cref{MV lemma}. Then the colimit, too, is free: indeed, if 
\[ 0 = M_0 \to M_1 \to M_2 \to \dots \]
is such a directed system, and $N_k$ denotes a choice of complement of $M_{k-1}$ in $M_k$ for all $k$, then $\varinjlim_k M_k \cong \bigoplus_{k=1}^\infty N_k$. \end{proof}

\begin{notation}\label{notation:join}If $\Po$ is a poset, let $\mathsf C \Po = \Po \cup \{0\}$ denote $\Po$ adjoined a minimum element. If $\mathsf P$ and $\mathsf Q$ are posets, let $\mathsf P \altjoin \mathsf Q := (\mathsf{CP} \times \mathsf{CQ}) \setminus \{(0,0)\}$, where the poset structure on $\mathsf{CP} \times \mathsf{CQ}$ is componentwise. 
\end{notation}

\begin{rem}
    The operation $\altjoin$ is associative. There is a natural homeomorphism $|\mathsf P \altjoin \mathsf Q| \cong |\mathsf P| \ast |\mathsf Q|$ on geometric realizations \cite[Proposition 1.9]{quillen-homotopyproperties}. Hence the operation $\altjoin$ provides another lift of the join operation from spaces to posets than the one of \cref{notation:quillenjoin}. 
\end{rem}

\begin{prop}\label{final prop}Let $V$ be a handlebody of genus $g \geq 2$, with no marked points or discs on its boundary. Assume that the statement of \cref{thm:B} is known for all handlebodies of genus strictly less than $g$, with any number of marked points or boundary components. Then  the statement of \cref{thm:B} holds also for $V$; that is, $|\NS(V)|$ is homology equivalent to a wedge of spheres of dimension $2g-3$. 
\end{prop}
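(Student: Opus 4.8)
The strategy is to combine the two ingredients flagged in the introduction. By \cref{high connectivity} we already know $|\NS(V)|$ is homologically $(2g-4)$-connected, so it remains to prove that $|\NS(V)|$ is homologically of dimension $\leq 2g-3$; the conclusion that it is homology equivalent to a wedge of $S^{2g-3}$'s then follows formally. To bound the homological dimension I would apply \cref{dimension lemma} to the poset $\Po = \NS(V)$ of nonempty non-simple disc systems, equipped with the strictly decreasing rank function $r(D) = -|D|$ (number of discs, negated), or equivalently work with $\NS(V)^\op$ and a strictly decreasing function there. The hypothesis to verify is that for every non-simple disc system $D$, the poset $\NS(V)_{>D}$ of non-simple disc systems strictly containing $D$ is homologically of dimension $\leq 2g-4$.

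The key step is therefore to understand $\NS(V)_{>D}$ for a fixed non-simple $D$. Cutting $V$ along $D$ produces handlebodies $V_1,\dots,V_s$, at least one of which — say $V_1,\dots,V_t$ with $t\geq 1$ — has positive genus (this is exactly what it means for $D$ to be non-simple). A disc system strictly containing $D$ is the same as $D$ together with a nonempty disc system in $\bigsqcup_i V_i$, and such a union is non-simple precisely when it fails to be genus-reducing to zero in at least one positive-genus piece. I expect this to identify $\NS(V)_{>D}$, up to homotopy, with an iterated join built out of the complexes $\compD(V_i)$ and $\compNS(V_i)$ over the positive-genus pieces (together with $\compD$ of the genus-zero pieces, which are contractible by \cref{disk cx contractible} and so don't affect the join), along the lines of the fibre-lemma computation in the proof of \cref{prop: S contractible}. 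Concretely, the complement-type decomposition should give something like a join of the form $\compD(V_1)\altjoin\cdots\altjoin\compD(V_t)$ with the "non-simple locus" removed, and by the same kind of poset manipulation used in \cref{hocolim2}/\cref{thm:simplifying} this is governed by the $\compNS(V_i)$. Using the inductive hypothesis, each $\compNS(V_i)$ (for $V_i$ of genus $g_i < g$) is homology equivalent to a wedge of spheres of dimension $2g_i - 3$ (or $2g_i-4+b_i+p_i$ in the marked case), and each $\compD(V_i)$ is contractible; a join of spaces that are homologically of bounded dimension is again homologically of bounded dimension, with the dimension adding up (plus the number of joins minus one). A bookkeeping computation with $\sum g_i \leq g$ and the number of pieces should then yield the bound $2g-4$ on the homological dimension of $\NS(V)_{>D}$, which is exactly what \cref{dimension lemma} needs. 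One subtlety is that $\NS(V)_{>D}$ need not be homotopy equivalent to a join on the nose — the non-simplicity condition couples the different pieces — so the honest statement will be an upper bound on its homological dimension obtained by a Mayer--Vietoris or filtration argument over the pieces rather than an exact homotopy identification.

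The main obstacle I anticipate is precisely this combinatorial analysis of $\NS(V)_{>D}$: correctly identifying how the non-simplicity constraint interacts with the product decomposition $\prod_i \D(V_i)$, and extracting from it a clean homological-dimension bound. The cleanest route is probably to filter $\NS(V)_{>D}$ by which of the positive-genus pieces $V_i$ receive a genus-reducing disc system, write it as a union of downwards-closed (or upwards-closed) subposets indexed by that data, check that the relevant intersections are contractible (using contractibility of $\SD(V_i)$ and $\D(V_i)$), and then apply \cref{hocolim2} — or its homological shadow — to reduce to a join of the $\NS(V_i)$'s, where the inductive hypothesis applies. Everything after the dimension bound is soft: high connectivity plus bounded homological dimension plus freeness of the top homology gives the wedge-of-spheres conclusion, and the base cases of the induction ($g=2$, or the marked cases of smaller genus) will need to be handled separately or folded into a joint induction.
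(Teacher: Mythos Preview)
Your overall plan matches the paper's proof: reduce to a homological dimension bound via \cref{dimension lemma}, analyze $\NS(V)_{>D}$ by cutting along $D$, and feed in the inductive hypothesis on the pieces. Two points deserve correction or sharpening.

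First, a genuine error: you assert that the disc complexes $\compD(W_j)$ of the genus-zero pieces are contractible by \cref{disk cx contractible}. That theorem explicitly requires $g>0$. In a genus-zero handlebody every essential curve on the boundary bounds a disc, so $\compD(W_j)$ \emph{is} the curve complex of the planar surface $\partial W_j$, and by Harer this is a wedge of spheres of dimension $c_j-4$ (where $c_j$ is the number of marked boundary discs), not contractible. You must include these sphere dimensions in your bookkeeping; the Euler-characteristic identity $2g-2=\sum_i(2g_i-2+b_i)+\sum_j(-2+c_j)$ is what makes the count come out to $2g-4-t\leq 2g-4$.

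Second, you are too pessimistic in expecting only an upper bound from a Mayer--Vietoris/filtration argument. The paper gets an exact homotopy identification: using the $\altjoin$ operation one has
\[
\NS(V)_{>D}\;\cong\;\mathsf X\,\altjoin\,\D(W_1)\,\altjoin\cdots\altjoin\,\D(W_t),
\]
where $\mathsf X\subset \D(V_1)\altjoin\cdots\altjoin\D(V_s)$ is the union over $i$ of the subposets where the $V_i$-component lies in $\NS(V_i)$. Since each $\D(V_i)$ is contractible (here $g_i>0$), \cref{hocolim2} applies directly to give $|\mathsf X|\simeq\Sigma^{s-1}\bigl(|\NS(V_1)|\ast\cdots\ast|\NS(V_s)|\bigr)$, and the inductive hypothesis (in the \emph{marked} form $\nu(g_i,b_i,0)=2g_i-4+b_i$, since each $V_i$ has $b_i>0$ boundary discs) finishes the computation cleanly. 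So the ``coupling'' you worry about is handled exactly by \cref{hocolim2}, not merely bounded by it.
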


\begin{proof}It suffices to prove that $|\NS(V)|$ is homologically of dimension~$\leq (2g-3)$, after \cref{high connectivity}. We prove this using the criterion of \cref{dimension lemma}. Thus we choose $D \in \NS(V)$ and prove that $|\NS(V)_{>D}|$ is homologically of dimension $\leq (2g-4)$.

Suppose that $D$ cuts $V$ into handlebodies $V_1,\dots,V_{s}$ of positive genera $g_1,\dots,g_s$,  each of which has $b_1,\dots,b_s$ marked boundary disks, and $W_1,\dots,W_t$ of genus zero, each of which has $c_1,\dots,c_t$ marked boundary disks. Observe the relation
\begin{equation}\tag{$\star$}\label{eq:euler char}
    2g-2 = \sum_{i=1}^s (2g_i - 2 + b_i) + \sum_{j=1}^t (-2+c_j).
\end{equation} 
Using \cref{notation:join} we now have the formula
$$ \NS(V)_{>D} \cong \mathsf X \altjoin \D(W_1) \altjoin \dots \altjoin \D(W_t)$$
where $\mathsf X$ denotes the subposet of $\D(V_1) \altjoin \dots \altjoin \D(V_s)$ given by the union
\[
 \bigcup_{i=1}^s \D(V_1) \altjoin \dots \altjoin \D(V_{i-1}) \altjoin \NS(V_i) \altjoin \D(V_{i+1}) \altjoin \dots \altjoin \D(V_s).
\]Indeed, this expresses that a non-simple disc system properly containing $D$ will be given by a nonempty family of discs in $V \setminus D$, such that for at least one $i=1,\dots,s$, those discs lying in $V_i$ will form a non-simple system (possibly empty), whereas the discs in the remaining components may be arbitrary. 

Now each  disc complex is contractible by \cref{disk cx contractible}. Hence \cref{hocolim2} applies, with $\mathsf P = \mathsf X$ and $\mathsf Q_i = \D(V_1) \altjoin \dots \altjoin \D(V_{i-1}) \altjoin \NS(V_i) \altjoin \D(V_{i+1}) \altjoin \dots \altjoin \D(V_s)$, to show that $$|\mathsf X| \simeq \Sigma^{s-1} (|\NS(V_1)| \ast \dots \ast |\NS(V_s)|) .$$ Since $g_i<g$ for all $i$, we know by induction on genus that each factor $|\NS(V_i)|$ is homology equivalent to a wedge of spheres of dimension $2g_i-4+b_i$. Since the disc complex and the curve complex coincide in genus zero, $|\D(W_j)|$ is homotopic to a wedge of spheres of dimension $-4+c_j$ by Harer \cite{harer}. Thus $|\NS(V)_{>D}|$ is homology equivalent to a wedge of spheres of dimension 
\[ t+ 2s-2+ \sum_{i=1}^s (2g_i-4+b_i) + \sum_{j=1}^t (-4+c_j) = 2g-4-t,\]
using Equation \eqref{eq:euler char}. The result follows. \end{proof}

\begin{prop}\label{birman trick} Fix $g \geq 2$. If \cref{thm:A} holds when $b=p=0$, then it holds for all values of  $b$ and $p$. Similarly when $g=1$, if \cref{thm:A} holds when $b=0$ and $p=1$, then it holds for all values of $b$ and $p$ with $b+p>0$. 
\end{prop}

\begin{proof}
    This is an application of \cref{prop:SES}. Firstly, if $V'$ is obtained from $V$ by replacing one of the boundary discs with a marked point, then there is a short exact sequence $$ 0 \to \Z \to \hmod(V) \to \hmod(V')\to 1,$$
    so \cref{prop:SES} implies that $\hmod(V)$ satisfies \cref{thm:A} whenever $\hmod(V')$ does. Hence \cref{thm:A} holds in general, if it holds in the special case that $b=0$.
    
    Secondly, let $V''$ be obtained from $V$ by forgetting one of the marked boundary points, and let $S''$ be the boundary of $V''$, minus its marked points and boundary discs. If $S''$ has negative Euler characteristic, then by \cite[Lemma~3.3]{hensel} there is instead a short exact sequence (a Birman exact sequence for handlebodies)
    $$ 1 \to \pi_1(S'') \to \hmod(V) \to \hmod(V'')\to 1.$$
    Again $\hmod(V)$ satisfies \cref{thm:A} whenever $\hmod(V'')$ does. Hence \cref{thm:A} holds in general, if it holds in the special case that $p=0$ (or $p=1$ when $g=1$).     
\end{proof}

\begin{prop}\label{genus one}
    The statement of \cref{thm:A} holds when $g=1$. 
\end{prop}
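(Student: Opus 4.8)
The plan is to reduce to a handlebody of minimal complexity via \cref{birman trick} and then to verify the criterion of \cref{criterion} by a direct inspection. Note that the inductive mechanism of \cref{final prop} cannot be invoked here, since the genus-one statement is precisely its base case, and the poset simplifications behind \cref{thm:simplifying} (in particular \cref{p:colour_forgetting_maps}) degenerate at minimal complexity. Concretely, by \cref{birman trick} it suffices to establish \cref{thm:A} for $g=1$ and a single pair $(b,p)$ with $2g-2+b+p>0$; I would take $(b,p)=(0,1)$, so that $V$ is a solid torus with one marked boundary point and $S$ is the once-punctured torus. Then $\dim\Y(V)=6g-6+3b+2p=2$ and $d(1,0,1)=1$, so the goal is to show that $\hmod(V)$ is a virtual duality group of dimension $1$.

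The geometric input is that $V$ has, up to isotopy, a unique essential disc $D_1$ (the meridian disc). Indeed, discs are determined by their meridians, a meridian is a simple closed curve on $S$ that is nullhomotopic in $V$, and such a curve represents a primitive class in $H_1(\partial V)\cong\Z^2$ lying in the rank-one kernel of $H_1(\partial V)\to H_1(V)\cong\Z$; hence it is homologous --- and therefore isotopic on the once-punctured torus --- to the standard meridian. Since $S$ admits no two disjoint non-isotopic essential non-peripheral curves, $\{D_1\}$ is the only nonempty disc system in $V$, and $D_1$ is non-separating, so $\{D_1\}$ is \emph{simple}. Consequently $\NS(V)=\varnothing$ and $\compNS(V)=\varnothing$. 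The boundary poset is then immediate: among the three colourings $D_1^{\mathrm{red}},D_1^{\mathrm{green}},D_1^{\mathrm{blue}}$ of $D_1$ in $\RGB(V)$, exactly $D_1^{\mathrm{red}}$ (it contains a red disc) and $D_1^{\mathrm{blue}}$ (deleting its blue disc leaves the empty disc system, which is non-simple because $g>0$) belong to $\dRGB(V)$, and these two elements are incomparable. Hence $|\dRGB(V)|\cong S^0$.

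From here I would conclude as follows. By \cref{thm:combinatorial model 2}, $\partial\Y(V)\simeq|\dRGB(V)|\cong S^0$ (in agreement with $\partial\Y(V)\simeq\Sigma\compNS(V)=\Sigma\varnothing\cong S^0$ coming from \cref{thm:simplifying}). Since $S^0$ is homology equivalent to a wedge of $0$-spheres, and $\Y(V)$ is a contractible oriented $2$-manifold(-with-corners) on which the virtually torsion-free group $\hmod(V)$ acts properly and cocompactly by \cref{hp}, \cref{criterion} applies and shows that $\hmod(V)$ is a virtual duality group of dimension $2-0-1=1=d(1,0,1)$. By \cref{birman trick} this yields \cref{thm:A} for $g=1$ and all $b,p$ with $2g-2+b+p>0$.

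There is no genuine obstacle here --- the purpose of isolating the genus-one case is precisely that the complexity is too small for the general arguments --- so the only point demanding attention is the degenerate bookkeeping: the empty disc system must be counted as non-simple when $g>0$, and the reduced homology of the empty space taken to be $\Z$ in degree $-1$, so that $\partial\Y(V)$ comes out homotopy equivalent to $S^0$ rather than to a point. Reading off $|\dRGB(V)|\cong S^0$ directly from the two-element description above avoids any ambiguity in applying \cref{thm:simplifying} at minimal complexity, and is consistent with the fact that $\hmod(V)$ is here virtually infinite cyclic, being the stabiliser of the meridian slope in $\mcg(S)$.
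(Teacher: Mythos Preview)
Your argument is correct, but it is considerably more elaborate than the paper's. The paper also reduces via \cref{birman trick} to $(g,b,p)=(1,0,1)$, but then simply observes that in this case $\hmod(V)\cong \Z\rtimes\{\pm 1\}$, which is virtually infinite cyclic and hence a virtual duality group of dimension $1$. You instead run the entire geometric machinery of the paper in this degenerate case: you identify the unique disc system, compute $\dRGB(V)$ by hand as two incomparable points, read off $\partial\Y(V)\simeq S^0$ via \cref{thm:combinatorial model 2}, and apply \cref{criterion}. This is valid and has the virtue of being uniform with the general argument, illustrating exactly how the boundary picture collapses at minimal complexity; the paper's approach has the virtue of being a one-line group-theoretic observation that avoids invoking \cref{hp}, \cref{thm:combinatorial model 2}, and \cref{criterion} altogether. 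You in fact acknowledge the direct route in your final sentence (``virtually infinite cyclic''), so you might simply lead with that.

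One small correction: your claim that the simplifications of \cref{p:colour_forgetting_maps} ``degenerate at minimal complexity'' is not quite accurate. Those arguments are stated for all $g>0$ and do apply here; the condition defining $\mathsf Q_1$ should be read as ``$D\setminus B(D)$ is non-simple (possibly empty)'', so that $D_1^{\mathrm{blue}}\in\mathsf Q_1$, and \cref{thm:simplifying} then gives $|\dRGB(V)|\simeq\Sigma\varnothing\cong S^0$ on the nose. This does not affect your proof, since you bypass \cref{thm:simplifying} and compute $|\dRGB(V)|$ directly, but the framing suggests a failure of the machinery that is not actually there.
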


\begin{proof}
    By \cref{birman trick}, it suffices to show this when $(g,b,p)=(1,0,1)$. But in this case we have $\hmod(V)\cong \Z \rtimes \{\pm 1\}$, which is clearly a virtual duality group.
\end{proof}

\begin{thm}
    Theorems \ref{thm:A}, \ref{thm:B}, and \ref{thm:C} hold.
\end{thm}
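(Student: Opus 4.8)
The plan is to deduce all three theorems at once by assembling the results of this section into an induction on the genus. Recall the setup: $\Y(V)$ is a contractible, orientable manifold-with-boundary of dimension $6g-6+3b+2p$ carrying a proper cocompact action of $\hmod(V)$ (\cref{hp}), and for $g>0$ one has $\partial\Y(V)\simeq \Sigma\compNS(V)$ (\cref{thm:combinatorial model 2} and \cref{thm:simplifying}). By \cref{criterion}, $\hmod(V)$ is a virtual duality group of dimension $n$ exactly when $\partial\Y(V)$ --- equivalently $\Sigma\compNS(V)$ --- is homology equivalent to a wedge of $q$-spheres with $q=(6g-6+3b+2p)-n-1$, in which case the dualising module of every torsion-free finite-index subgroup is $\widetilde H_q(\partial\Y(V),\Z)$, canonically (using also \cref{finite index}). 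A direct check of the arithmetic shows that $n=d(g,b,p)$ corresponds to $q=\nu(g,b,p)+1$, and $\widetilde H_{\nu(g,b,p)+1}(\Sigma\compNS(V),\Z)\cong\widetilde H_{\nu(g,b,p)}(\compNS(V),\Z)$. Thus for $g>0$ the three statements \cref{thm:A}, \cref{thm:B}, \cref{thm:C} are, for any fixed $(g,b,p)$, all equivalent to the single assertion that $\hmod(V)$ is a virtual duality group of dimension $d(g,b,p)$; for $g=0$ only \cref{thm:A} is in play, and it is Harer's theorem \cite{harer} for $\mcg(S)\cong\hmod(V)$. It therefore suffices to prove that $\hmod(V)$ is a virtual duality group of dimension $d(g,b,p)$ for every $(g,b,p)$ with $2g-2+b+p>0$.

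The case $g=1$ is \cref{genus one}. (In the exceptional instance $(g,b,p)=(1,0,1)$ we have $\compNS(V)=\varnothing$ and $\nu=-1$, so \cref{thm:B} and \cref{thm:C} are to be read with the convention $\widetilde H_{-1}(\varnothing,\Z)=\Z$.) For $g\geq2$ we induct on $g$, the inductive hypothesis being that \cref{thm:A} and \cref{thm:B} hold for all handlebodies of genus strictly less than $g$. First take $b=p=0$: the hypothesis of \cref{final prop} is precisely the inductive hypothesis for \cref{thm:B}, so \cref{final prop} gives that $\compNS(V)$ is homology equivalent to a wedge of $(2g-3)$-spheres, and then \cref{criterion} yields that $\hmod(V)$ is a virtual duality group of dimension $d(g,0,0)=4g-5$. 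Next, for $b+p>0$, \cref{birman trick} propagates the virtual duality property from $(g,0,0)$ to $(g,b,p)$; the dimension bookkeeping inside its proof --- via \cref{prop:SES}, with $\Z$ and finitely generated free groups playing the role of duality subgroups of dimension one --- produces exactly $d(g,b,p)$. This closes the induction and, together with the equivalence recorded above, establishes \cref{thm:A}, \cref{thm:B} and \cref{thm:C} in all cases.

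I expect no genuine obstacle to remain at this stage: the substantive work has been done in \cref{final prop} and, beneath it, in the homological dimension estimate of \cref{dimension lemma} fed by the high-connectivity input of \cref{high connectivity}. The two points that require care are organisational. First, one must keep \cref{thm:A}, \cref{thm:B} and \cref{thm:C} synchronised through \cref{criterion} and check that the arithmetic relating $d(g,b,p)$, $\nu(g,b,p)$ and $\dim\Y(V)=6g-6+3b+2p$ is internally consistent across all ranges (including the degenerate case $g=1$, $b+p=1$). Second, one must note that cutting a genus-$g$ handlebody along a nonempty non-simple disc system yields pieces that are either positive-genus handlebodies of genus $<g$ --- handled by the inductive hypothesis --- or genus-zero handlebodies, whose disc complexes are curve complexes and are handled directly by Harer; this dichotomy is exactly what allows the induction in \cref{final prop}, and hence the whole argument, to close.
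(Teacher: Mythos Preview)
Your proof is correct and follows the paper's approach exactly: reduce Theorems~\ref{thm:A}, \ref{thm:B}, \ref{thm:C} to the single virtual duality statement via \cref{criterion}, dispose of $g=0$ by Harer and $g=1$ by \cref{genus one}, then induct on $g$ by applying \cref{final prop} at $b=p=0$ and propagating to all $(b,p)$ with \cref{birman trick}. The only (harmless) imprecision is calling \cref{thm:C} \emph{equivalent} to the other two---as stated it presupposes duality without pinning down the dimension, so it is implied by \cref{thm:A} rather than equivalent to it; but you only use the implication, so the argument is unaffected.
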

\begin{proof}
    As observed in the first paragraph of this section, we have already proven that the statements of \cref{thm:A} and \cref{thm:B} are equivalent when $g>0$, and that both of them imply \cref{thm:C}. (When $g=0$, \cref{thm:A} is a theorem of Harer and Theorems \ref{thm:B} and \ref{thm:C} are vacuous.)

    Now \cref{genus one} shows that \cref{thm:A} holds when $g=1$. Thus we may fix some $g \geq 2$. According to \cref{birman trick}, to show the statement of \cref{thm:B} for \emph{all} values of $b$ and $p$ in this genus, it suffices to show it for 
    $b=p=0$. But by induction on the genus we may assume the statement to be known for any handlebody of lower genus, so we are done by \cref{final prop}. 
\end{proof}

\begin{rem}The precise inductive strategy is quite important. Specifically, it is crucial that \cref{dimension lemma} is only ever applied to the case $b=p=0$ of no marked discs or points on the boundary (as in \cref{final prop}), and that the case of markings is instead dealt with using the ``Birman exact sequence'' via \cref{birman trick}. 

Indeed, in order to apply \cref{dimension lemma} to $\NS(V)$, we need that $\NS(V)_{>D}$ has strictly lower homological dimension than $\NS(V)$, for all $D \in \NS(V)$. This is only true in the closed case, the point being that the homological dimension of $|\NS(V)|$ is $-\chi(S)-1$ when $b=p=0$, and $-\chi(S)-2$ when $b+p>0$. In this sense the dimension is one higher than expected when $b=p=0$. 
Note also that in applying \cref{dimension lemma} we cut the handlebody into pieces along some nonempty disc system, in which case all pieces have at least one marked disc on its boundary.  
\end{rem}

Other than in the low genus cases, one can show the stronger result that $\compNS(V)$ is \emph{homotopy equivalent} to a wedge of spheres.

\begin{prop} \label{prop: simple connectivity}
If $g \geq 3$, then $\compNS(V)$ is homotopy equivalent to a wedge of spheres of dimension $\nu(g,b,p)$, where $\nu(g,0,0) = 2g-3$ and $\nu(g,b,p) = 2g-4+b+p$ if $b+p>0$.
\end{prop}

\begin{proof}
After \cref{thm:B}, it is enough to show that $\compNS(V)$ is  simply connected when $g \geq 3$. By the work of McCullough \cite{mccullough} and Hatcher--Wahl \cite{hatcherwahl} the full disc complex $\compD(V)$ is contractible (see Theorem~\ref{disk cx contractible}): we will show that the inclusion $\compNS(V) \subset \compD(V)$ is 1-connected.  

As a simple disc system needs to use at least $g$ discs, it follows that $\compNS(V)$ contains the $(g-2)$-skeleton of $\compD(V)$, and so contains the $2$-skeleton of $\compD(V)$ when $g \geq 4$. Thus, $\compNS(V)$ is simply connected for $g \geq 4$, as $\compD(V)$ is.

When $g=3$, the complex $\compNS(V)$ contains the $1$-skeleton of $\compD(V)$. We will show that the boundary $\partial \sigma$ of every 2-cell $\sigma \in \compD(V) \setminus \compNS(V)$  is still bounded by a disc in $\compNS(V)$, which implies that $\compNS(V)$ is also simply connected, as above. When $g=3$, the only 2-cells in $\compD(V)$ forming simple disc systems consist of three nonseparating discs.  We can obtain a 3-cell $\sigma'$ of $\compD(V)$ by adding an arbitrary disjoint separating disc to $\sigma$. Every face of $\sigma'$ other than $\sigma$ contains this separating disc, and is therefore contained in $\compNS(V)$, so that the remaining faces of $\sigma'$ form a disc bounding $\partial \sigma$. It follows that the inclusion $\compNS(V) \subset \compD(V)$ is 1-connected, as required.
\end{proof}

\begin{figure}
\begin{tikzpicture}
\draw[smooth] (0,1) to[out=30,in=150] (2,1) to[out=-30,in=210] (3,1) to[out=30,in=150] (5,1) to[out=-30,in=210] (6,1) to[out=30,in=150] (8,1) to[out=-30,in=30] (8,-1) to[out=210,in=-30] (6,-1) to[out=150,in=30] (5,-1) to[out=210,in=-30] (3,-1) to[out=150,in=30] (2,-1) to[out=210,in=-30] (0,-1) to[out=150,in=-150] (0,1);
\draw[smooth] (0.4,0.1) .. controls (0.8,-0.25) and (1.2,-0.25) .. (1.6,0.1);
\draw[smooth] (0.5,0) .. controls (0.8,0.2) and (1.2,0.2) .. (1.5,0);
\draw[smooth] (3.4,0.1) .. controls (3.8,-0.25) and (4.2,-0.25) .. (4.6,0.1);
\draw[smooth] (3.5,0) .. controls (3.8,0.2) and (4.2,0.2) .. (4.5,0);
\draw[smooth] (6.4,0.1) .. controls (6.8,-0.25) and (7.2,-0.25) .. (7.6,0.1);
\draw[smooth] (6.5,0) .. controls (6.8,0.2) and (7.2,0.2) .. (7.5,0);
\draw[thick, purple] (-0.5,0) arc(180:0:0.51 and 0.2);
\draw[thick, purple, dashed] (-0.5,0) arc(180:0:0.51 and -0.2);
\draw[thick, orange] (2.5,-0.85) arc(270:90:0.3 and 0.85);
\draw[thick, orange, dashed] (2.5,-0.85) arc(270:450:0.3 and 0.85);
\draw[thick, purple] (4.0,-0.15) arc(270:90:0.3 and -1.14/2);
\draw[thick, purple,dashed] (4.0,-0.15) arc(270:450:0.3 and -1.14/2);
\draw[thick, purple] (8.5,0) arc(180:0:-0.51 and 0.2);
\draw[dashed, thick, purple] (8.5,0) arc(180:0:-0.51 and -0.2);

\coordinate (A) at (10,0); 
\coordinate (B) at (12,0); 
\coordinate (C) at (11,1); 
\coordinate (D) at (11,-1); 

\draw[thick, dashed] (A) -- (B);
\draw[thick] (A) -- (C);
\draw[thick] (A) -- (D);
\draw[thick, purple] (B) -- (C);
\draw[thick, purple] (C) -- (D);
\draw[thick, purple] (D) -- (B);

\fill[orange] (A) circle (2pt); 
\fill[purple] (B) circle (2pt); 
\fill[purple] (C) circle (2pt); 
\fill[purple] (D) circle (2pt); 
\end{tikzpicture}
\caption{Four discs in a genus three handlebody $V$ (left) and the corresponding simplex in $\mathbf D(V)$ (right). The three purple discs span a $2$-cell $\sigma \in \mathbf D(V) \setminus \mathbf{NS}(V)$; the cone on $\partial \sigma$ with respect to the orange vertex lies in $\compNS(V)$.}    
\end{figure}

\begin{rem}
    The argument in \cref{prop: simple connectivity} may be considered as an instance of the \emph{bad simplex argument} \cite[Corollary 2.2(a)]{hatchervogtmann} applied to the pair $(\compD(V),\compNS(V))$, taking all simplices of $\compD(V) \setminus \compNS(V)$ to be bad. To prove that $\compNS(V)$ is $1$-connected, one only needs to study bad simplices of dimension $\leq 2$. If one wanted to push the argument further, then in genus $2$ one would need to consider both bad $2$-cells and bad edges, and in genus $1$ one would moreover need to consider bad vertices. Bad $2$-cells could be dealt with in the same manner in any genus, but bad edges and vertices seem like they would moreover require connectivity or simple connectivity of complexes of separating curves on a partitioned genus zero surface (cf.\ \cite{looijenga}). \end{rem}

\bibliographystyle{alpha}
\bibliography{database}

@article {looijenga,
    AUTHOR = {Looijenga, Eduard},
     TITLE = {Connectivity of complexes of separating curves},
   JOURNAL = {Groups Geom. Dyn.},
  FJOURNAL = {Groups, Geometry, and Dynamics},
    VOLUME = {7},
      YEAR = {2013},
    NUMBER = {2},
     PAGES = {443--450},
      ISSN = {1661-7207,1661-7215},
   MRCLASS = {57N05 (55U15)},
  MRNUMBER = {3054577},
MRREVIEWER = {Michael\ R.\ Kelly},
       DOI = {10.4171/GGD/189},
       URL = {https://doi.org/10.4171/GGD/189},
}

@article {hatchervogtmann,
    AUTHOR = {Hatcher, Allen and Vogtmann, Karen},
     TITLE = {Tethers and homology stability for surfaces},
   JOURNAL = {Algebr. Geom. Topol.},
  FJOURNAL = {Algebraic \& Geometric Topology},
    VOLUME = {17},
      YEAR = {2017},
    NUMBER = {3},
     PAGES = {1871--1916},
      ISSN = {1472-2747,1472-2739},
   MRCLASS = {57M07 (20J06)},
  MRNUMBER = {3677942},
MRREVIEWER = {S\'ergio\ Tadao\ Martins},
       DOI = {10.2140/agt.2017.17.1871},
       URL = {https://doi.org/10.2140/agt.2017.17.1871},
}

@article {quillen-homotopyproperties,
    AUTHOR = {Quillen, Daniel},
     TITLE = {Homotopy properties of the poset of nontrivial {$p$}-subgroups
              of a group},
   JOURNAL = {Adv. in Math.},
  FJOURNAL = {Advances in Mathematics},
    VOLUME = {28},
      YEAR = {1978},
    NUMBER = {2},
     PAGES = {101--128},
      ISSN = {0001-8708},
   MRCLASS = {20J99},
  MRNUMBER = {493916},
MRREVIEWER = {Kenneth\ S.\ Brown},
       DOI = {10.1016/0001-8708(78)90058-0},
       URL = {https://doi.org/10.1016/0001-8708(78)90058-0},
}

@article{harer,
	author = {Harer, John L.},
	date-added = {2023-11-17 11:43:34 +0000},
	date-modified = {2023-11-17 11:43:58 +0000},
	doi = {10.1007/BF01388737},
	fjournal = {Inventiones Mathematicae},
	issn = {0020-9910},
	journal = {Invent. Math.},
	mrclass = {32G15 (20F38 57N05)},
	mrnumber = {830043},
	mrreviewer = {K. Vogtmann},
	number = {1},
	pages = {157--176},
	title = {The virtual cohomological dimension of the mapping class group of an orientable surface},
	url = {https://doi.org/10.1007/BF01388737},
	volume = {84},
	year = {1986},
	bdsk-url-1 = {https://doi.org/10.1007/BF01388737}}

@article{borelserre,
	author = {Borel, Armand and Serre, Jean-Pierre},
	date-added = {2023-11-17 11:31:29 +0000},
	date-modified = {2023-11-17 11:31:48 +0000},
	doi = {10.1007/BF02566134},
	fjournal = {Commentarii Mathematici Helvetici},
	issn = {0010-2571},
	journal = {Comment. Math. Helv.},
	mrclass = {22E40},
	mrnumber = {387495},
	mrreviewer = {M. S. Raghunathan},
	pages = {436--491},
	title = {Corners and arithmetic groups},
	url = {https://doi.org/10.1007/BF02566134},
	volume = {48},
	year = {1973},
	bdsk-url-1 = {https://doi.org/10.1007/BF02566134}}

@book {munson-volic,
    AUTHOR = {Munson, Brian A. and Voli\'{c}, Ismar},
     TITLE = {Cubical homotopy theory},
    SERIES = {New Mathematical Monographs},
    VOLUME = {25},
 PUBLISHER = {Cambridge University Press, Cambridge},
      YEAR = {2015},
     PAGES = {xv+631},
      ISBN = {978-1-107-03025-1},
   MRCLASS = {55P99 (16B50 18G30 55U10 57Q45 57R19)},
  MRNUMBER = {3559153},
MRREVIEWER = {Rui\ Miguel\ Saramago},
       DOI = {10.1017/CBO9781139343329},
       URL = {https://doi.org/10.1017/CBO9781139343329},
}

@article{cho,
	author = {Cho, Sangbum},
	doi = {10.1090/S0002-9939-07-09188-5},
	fjournal = {Proceedings of the American Mathematical Society},
	issn = {0002-9939,1088-6826},
	journal = {Proc. Amer. Math. Soc.},
	mrclass = {57M40},
	mrnumber = {2361888},
	mrreviewer = {Yo'av\ Rieck},
	number = {3},
	pages = {1113--1123},
	title = {Homeomorphisms of the 3-sphere that preserve a {H}eegaard splitting of genus two},
	url = {https://doi.org/10.1090/S0002-9939-07-09188-5},
	volume = {136},
	year = {2008},
	bdsk-url-1 = {https://doi.org/10.1090/S0002-9939-07-09188-5}}

@article {hirose,
    AUTHOR = {Hirose, Susumu},
     TITLE = {Some homological invariants of the mapping class group of a
              three-dimensional handlebody},
   JOURNAL = {Tohoku Math. J. (2)},
  FJOURNAL = {The Tohoku Mathematical Journal. Second Series},
    VOLUME = {55},
      YEAR = {2003},
    NUMBER = {4},
     PAGES = {543--549},
      ISSN = {0040-8735},
   MRCLASS = {57N10 (20F38 57M50)},
  MRNUMBER = {2017224},
MRREVIEWER = {Bruno P. Zimmermann},
       URL = {http://projecteuclid.org/euclid.tmj/1113247129},
}

@book {brown2,
    AUTHOR = {Brown, Kenneth S.},
     TITLE = {Cohomology of groups},
    SERIES = {Graduate Texts in Mathematics},
    VOLUME = {87},
      NOTE = {Corrected reprint of the 1982 original},
 PUBLISHER = {Springer-Verlag, New York},
      YEAR = {1994},
     PAGES = {x+306},
      ISBN = {0-387-90688-6},
   MRCLASS = {20J05 (20-02)},
  MRNUMBER = {1324339},
}

@article {hmnp,
    AUTHOR = {Himes, Zachary and Miller, Jeremy and Nariman, Sam and Putman,
              Andrew},
     TITLE = {The free factor complex and the dualizing module for the
              automorphism group of a free group},
   JOURNAL = {Int. Math. Res. Not. IMRN},
  FJOURNAL = {International Mathematics Research Notices. IMRN},
      YEAR = {2023},
    NUMBER = {22},
     PAGES = {19020--19068},
      ISSN = {1073-7928,1687-0247},
   MRCLASS = {20J05 (55U30)},
  MRNUMBER = {4669795},
MRREVIEWER = {Steffen\ Kionke},
       DOI = {10.1093/imrn/rnac330},
       URL = {https://doi.org/10.1093/imrn/rnac330},
}

@article{be,
	author = {Bieri, Robert and Eckmann, Beno},
	doi = {10.1007/BF01404060},
	fjournal = {Inventiones Mathematicae},
	issn = {0020-9910},
	journal = {Invent. Math.},
	mrclass = {20J05},
	mrnumber = {340449},
	mrreviewer = {L. Ribes},
	pages = {103--124},
	title = {Groups with homological duality generalizing {P}oincar\'{e} duality},
	url = {https://doi-org.ezp.sub.su.se/10.1007/BF01404060},
	volume = {20},
	year = {1973},
	bdsk-url-1 = {https://doi-org.ezp.sub.su.se/10.1007/BF01404060},
	bdsk-url-2 = {https://doi.org/10.1007/BF01404060}}

@article {mccullough,
    AUTHOR = {McCullough, Darryl},
     TITLE = {Virtually geometrically finite mapping class groups of
              {$3$}-manifolds},
   JOURNAL = {J. Differential Geom.},
  FJOURNAL = {Journal of Differential Geometry},
    VOLUME = {33},
      YEAR = {1991},
    NUMBER = {1},
     PAGES = {1--65},
      ISSN = {0022-040X,1945-743X},
   MRCLASS = {57M07 (57M50 57N05 57R30)},
  MRNUMBER = {1085134},
MRREVIEWER = {N.\ V.\ Ivanov},
       URL = {http://projecteuclid.org/euclid.jdg/1214446029},
}

@article {brown-finiteness,
    AUTHOR = {Brown, Kenneth S.},
     TITLE = {Homological criteria for finiteness},
   JOURNAL = {Comment. Math. Helv.},
  FJOURNAL = {Commentarii Mathematici Helvetici},
    VOLUME = {50},
      YEAR = {1975},
     PAGES = {129--135},
      ISSN = {0010-2571,1420-8946},
   MRCLASS = {18G05 (55D15)},
  MRNUMBER = {376820},
MRREVIEWER = {L.\ Ribes},
       DOI = {10.1007/BF02565740},
       URL = {https://doi.org/10.1007/BF02565740},
}

@article {hatcherwahl,
    AUTHOR = {Hatcher, Allen and Wahl, Nathalie},
     TITLE = {Stabilization for mapping class groups of 3-manifolds},
   JOURNAL = {Duke Math. J.},
  FJOURNAL = {Duke Mathematical Journal},
    VOLUME = {155},
      YEAR = {2010},
    NUMBER = {2},
     PAGES = {205--269},
      ISSN = {0012-7094,1547-7398},
   MRCLASS = {57M07 (20F28)},
  MRNUMBER = {2736166},
MRREVIEWER = {Mihalis\ A.\ Sykiotis},
       DOI = {10.1215/00127094-2010-055},
       URL = {https://doi.org/10.1215/00127094-2010-055},
}

@article {ivanov,
    AUTHOR = {Ivanov, Nikolai V.},
     TITLE = {Complexes of curves and {T}eichm\"{u}ller modular groups},
   JOURNAL = {Uspekhi Mat. Nauk},
  FJOURNAL = {Akademiya Nauk SSSR i Moskovskoe Matematicheskoe Obshchestvo.
              Uspekhi Matematicheskikh Nauk},
    VOLUME = {42},
      YEAR = {1987},
    NUMBER = {3(255)},
     PAGES = {49--91, 255},
      ISSN = {0042-1316},
   MRCLASS = {32G15 (30F10 57M99)},
  MRNUMBER = {896878},
MRREVIEWER = {A.\ D.\ Mednykh},
}

@article {bruck-gupta,
    AUTHOR = {Br\"{u}ck, Benjamin and Gupta, Radhika},
     TITLE = {Homotopy type of the complex of free factors of a free group},
   JOURNAL = {Proc. Lond. Math. Soc. (3)},
  FJOURNAL = {Proceedings of the London Mathematical Society. Third Series},
    VOLUME = {121},
      YEAR = {2020},
    NUMBER = {6},
     PAGES = {1737--1765},
      ISSN = {0024-6115,1460-244X},
   MRCLASS = {20F65 (20E05 20F28 57M07)},
  MRNUMBER = {4144374},
MRREVIEWER = {Thomas\ Koberda},
       DOI = {10.1112/plms.12381},
       URL = {https://doi.org/10.1112/plms.12381},
}

@article {cv,
    AUTHOR = {Culler, Marc and Vogtmann, Karen},
     TITLE = {Moduli of graphs and automorphisms of free groups},
   JOURNAL = {Invent. Math.},
  FJOURNAL = {Inventiones Mathematicae},
    VOLUME = {84},
      YEAR = {1986},
    NUMBER = {1},
     PAGES = {91--119},
      ISSN = {0020-9910,1432-1297},
   MRCLASS = {20F28 (20J05 57S30)},
  MRNUMBER = {830040},
MRREVIEWER = {Christopher\ W.\ Stark},
       DOI = {10.1007/BF01388734},
       URL = {https://doi.org/10.1007/BF01388734},
}

@article {giansiracusa,
    AUTHOR = {Giansiracusa, Jeffrey},
     TITLE = {The framed little 2-discs operad and diffeomorphisms of
              handlebodies},
   JOURNAL = {J. Topol.},
  FJOURNAL = {Journal of Topology},
    VOLUME = {4},
      YEAR = {2011},
    NUMBER = {4},
     PAGES = {919--941},
      ISSN = {1753-8416,1753-8424},
   MRCLASS = {57M50 (18D50 55P48)},
  MRNUMBER = {2860346},
MRREVIEWER = {Bruno\ Vallette},
       DOI = {10.1112/jtopol/jtr021},
       URL = {https://doi.org/10.1112/jtopol/jtr021},
}

@book{hovey,
	author = {Hovey, Mark},
	isbn = {0-8218-1359-5},
	mrclass = {55U35 (18D15 18G30 18G55)},
	mrnumber = {1650134},
	mrreviewer = {Teimuraz Pirashvili},
	pages = {xii+209},
	publisher = {American Mathematical Society, Providence, RI},
	series = {Mathematical Surveys and Monographs},
	title = {Model categories},
	volume = {63},
	year = {1999}}

@article{hainautpetersen,
	author = {Hainaut, Louis and Petersen, Dan},
	journal = {J. Eur. Math. Soc., \emph{published online first}},
	title = {Top weight cohomology of moduli spaces of {R}iemann surfaces and handlebodies},
	year = {2025},
}

@unpublished{WW,
	author = {Wade, Richard D. and Wasserman, Thomas A.},
	note = {Preprint available at arXiv:2405.05881},
	title = {{C}ohen--{M}acaulay Complexes, Duality Groups, and the dualizing module of {${\rm Out}(F_N)$}},
	year = {2024}}

@book{farbmargalit,
	address = {Princeton, NJ},
	author = {Farb, Benson and Margalit, Dan},
	isbn = {978-0-691-14794-9},
	mrclass = {57M50 (20F36 20F65 57M07 57N05)},
	mrnumber = {2850125 (2012h:57032)},
	mrreviewer = {Stephen P. Humphries},
	pages = {xiv+472},
	publisher = {Princeton University Press},
	series = {Princeton Mathematical Series},
	title = {A primer on mapping class groups},
	volume = {49},
	year = {2012}}

@InCollection{jiwolpert,
  author     = {Ji, Lizhen and Wolpert, Scott A.},
  title      = {A cofinite universal space for proper actions for mapping class groups},
  booktitle  = {In the tradition of {A}hlfors-{B}ers. {V}},
  publisher  = {Amer. Math. Soc., Providence, RI},
  year       = {2010},
  volume     = {510},
  series     = {Contemp. Math.},
  pages      = {151--163},
  doi        = {10.1090/conm/510/10023},
  mrclass    = {57M07 (20F65)},
  mrnumber   = {2581835},
  mrreviewer = {Sylvain Maillot},
  url        = {https://doi.org/10.1090/conm/510/10023},
}

@Article{bsv,
  author     = {Bux, Kai-Uwe and Smillie, Peter and Vogtmann, Karen},
  title      = {On the bordification of outer space},
  journal    = {J. Lond. Math. Soc. (2)},
  year       = {2018},
  volume     = {98},
  number     = {1},
  pages      = {12--34},
  issn       = {0024-6107},
  doi        = {10.1112/jlms.12124},
  fjournal   = {Journal of the London Mathematical Society. Second Series},
  mrclass    = {20E36 (20E05 20F65 57M07)},
  mrnumber   = {3847230},
  mrreviewer = {Mihalis A. Sykiotis},
  url        = {https://doi.org/10.1112/jlms.12124},
}

@Article{bestvinafeighn,
  author     = {Bestvina, Mladen and Feighn, Mark},
  title      = {The topology at infinity of {${\rm Out}(F_n)$}},
  journal    = {Invent. Math.},
  year       = {2000},
  volume     = {140},
  number     = {3},
  pages      = {651--692},
  issn       = {0020-9910},
  doi        = {10.1007/s002220000068},
  fjournal   = {Inventiones Mathematicae},
  mrclass    = {20E36 (20E05 20F65 57M07)},
  mrnumber   = {1760754},
  mrreviewer = {Ilya Kapovich},
  url        = {https://doi.org/10.1007/s002220000068},
}

@InCollection{hensel,
  author    = {Hensel, Sebastian},
  title     = {A primer on handlebody groups},
  booktitle = {Handbook of group actions. {V}},
  publisher = {Int. Press, Somerville, MA},
  year      = {2020},
  volume    = {48},
  series    = {Adv. Lect. Math. (ALM)},
  pages     = {143--177},
  mrclass   = {20F65 (57M07 57M60)},
  mrnumber  = {4237892},
}

@Article{vogtmann,
  author  = {Vogtmann, Karen},
  title   = {The boundary of bordified {O}uter space},
  journal = {arXiv:2404.11371},
  year    = {2024},
}

@Article{bruck-survey,
  author  = {Br{\"u}ck, Benjamin},
  title   = {({N}on-){V}anishing of high-dimensional group cohomology},
  journal = {arXiv:2404.15026},
  year    = {2024},
}

\end{document}